\documentclass{amsart}
\usepackage{amsmath}
\usepackage{amsthm}
\usepackage{thmtools}
\usepackage{thm-restate}
\usepackage{graphicx} 
\usepackage[dvipsnames]{xcolor}
\usepackage{amssymb}
\usepackage{stackrel}
\usepackage[style=alphabetic,maxnames=99,minalphanames=3,maxalphanames=4]{biblatex}
\usepackage{mathrsfs}
\usepackage{outlines}
\usepackage{tikz-cd}
\usepackage{todonotes}
\usepackage{lacromay}
\usepackage{hyperref}
\usepackage{xypic}
\usepackage{enumitem}
\AtBeginBibliography{\footnotesize}

\let\oldtocsection=\tocsection

\let\oldtocsubsection=\tocsubsection

\let\oldtocsubsubsection=\tocsubsubsection

\renewcommand{\tocsection}[2]{\hspace{0em}\oldtocsection{#1}{#2}}
\renewcommand{\tocsubsection}[2]{\hspace{1em}\oldtocsubsection{#1}{#2}}
\renewcommand{\tocsubsubsection}[2]{\hspace{2em}\oldtocsubsubsection{#1}{#2}}

\title{The Stable Adjunction in $\bA^1$-Homotopy Theory}
\author{Ajay Srinivasan}
\address{\scriptsize{Department of Mathematics, University of Michigan, Ann Arbor, MI 48109}}
\email{avsriniv@umich.edu}
\date{\today}

\def\makeautorefname#1#2{\expandafter\def\csname#1autorefname\endcsname{#2}}
\def\makeautorefname#1#2{\expandafter\def\csname#1autorefname\endcsname{#2}}

\def\equationautorefname~#1\null{(#1)\null}
\makeautorefname{footnote}{footnote}%
\makeautorefname{item}{item}%
\makeautorefname{figure}{Figure}%
\makeautorefname{table}{Table}%
\makeautorefname{part}{Part}%
\makeautorefname{appendix}{Appendix}%
\makeautorefname{chapter}{Chapter}%
\makeautorefname{section}{Section}%
\makeautorefname{subsection}{Section}%
\makeautorefname{subsubsection}{Section}%
\makeautorefname{thm}{Theorem}%
\makeautorefname{sta}{Statement}%
\makeautorefname{cor}{Corollary}%
\makeautorefname{lem}{Lemma}%
\makeautorefname{prop}{Proposition}%
\makeautorefname{pro}{Property}
\makeautorefname{conj}{Conjecture}%
\makeautorefname{defn}{Definition}%
\makeautorefname{notn}{Notation}
\makeautorefname{notns}{Notations}
\makeautorefname{rem}{Remark}%
\makeautorefname{rems}{Remarks}%
\makeautorefname{quest}{Question}%
\makeautorefname{exmp}{Example}%
\makeautorefname{ax}{Axiom}%
\makeautorefname{claim}{Claim}%
\makeautorefname{assn}{Assumption}%
\makeautorefname{asses}{Assumptions}%
\makeautorefname{countcom}{Assumption}%
\makeautorefname{countcoms}{Assumptions}%
\makeautorefname{countmcom}{Assumption}%
\makeautorefname{con}{Construction}%
\makeautorefname{prob}{Problem}%
\makeautorefname{warn}{Warning}%
\makeautorefname{obs}{Observation}%
\makeautorefname{conv}{Convention}%

\newtheorem{thm}{Theorem}[section]

\newtheorem{prop}[thm]{Proposition}
\newtheorem{lem}[thm]{Lemma}

\theoremstyle{definition}
\newtheorem{defn}[thm]{Definition}

\newtheorem{exmp}[thm]{Example}

\newtheorem{rem}[thm]{Remark}

\newtheorem{obs}[thm]{Observation}

\newtheorem*{asses:monad}{Assumptions \ref{asses:monad}}

\newlist{salist}{enumerate}{1}
\setlist[salist]{
    label=\textbf{SA\arabic*},
    ref=SA\arabic*
}

\newcommand{\bm}{\mathbf{m}}
\newcommand{\bn}{\mathbf{n}}

\begin{document}

\maketitle

\begin{abstract}
    We prove a homotopical monadicity theorem for the adjunction between the suspension spectrum and zeroth space functors in motivic stable homotopy theory. 
    Our proof verifies that motivic stable homotopy theory satisfies the hypotheses of the general monadicity theorem of \cite{KMZ25}.
    In the process, we demonstrate six preliminary results in simplicial motivic homotopy theory, the main technical ingredient being a weak commutativity theorem between the zeroth space functor and realization of simplicial motivic spectra.
    We also elaborate on a general framework relating monadic algebras under op-lax maps of monads. This monadic framework is used in the proof of the main results and may be of independent interest as well. 
    These simplicial results and the accompanying monadic framework may provide tools toward a conjectured operadic recognition principle for motivic infinite loop spaces.  
\end{abstract}

\tableofcontents

\section*{Introduction}

At the crux of stable homotopy theory is the adjunction between the suspension spectrum and zeroth space functors. The same is true in motivic stable homotopy theory. This article analyzes the monadic structure of that stable adjunction in motivic homotopy theory. Consider a left adjoint functor $F \colon \aC \rtarr \aD$ with right adjoint $G \colon \aD \rtarr \aC$. The functor $\GA \colon = GF$ on $\aC$ has a canonical monad structure, where the product is induced by the counit $\epz$ of the adjunction.
$\GA$ is called the \emph{adjunction monad}. Let $\GA[\aC]$ denote the category of $\GA$-algebras in $\aC$. For every $Y \in \aD$, $GY$ has a canonical $\GA$-action given by $G\epz$, and as such $G$ induces a functor $G_{\GA} \colon \aD \rtarr \GA[\aC]$.

The \emph{monadicity} of the adjunction $(F,G)$, in the general sense that we use the word here, will refer to the extent to which the functor $G_{\GA} \colon \aD \rtarr \GA[\aC]$ is an equivalence of categories.
The result in \cite{Beck25} provides necessary and sufficient conditions for $G_{\GA}$ to be an equivalence on the nose. This result is often called \emph{classical} monadicity.
However in many settings, including ours, the conditions for classical monadicity fail and a specificized homotopical approach is more enlightening as to the monadicity of an adjunction. The recent framework of \cite{KMZ25} provides one roadmap to such a form of \emph{homotopical} monadicity. Roughly speaking, this paper here establishes that the stable adjunction in motivic homotopy theory is monadic \emph{up to linked $\bA^1$-homotopy}. The analogous result in topology has roots dating back to \cite{May72,May09Einfty}, but is dealt more carefully in the general framework in \cite{KMZ25}. The approach here is to collect enough results to plug motivic stable homotopy theory 
into the machine of \cite{KMZ25}. Through our development, we resolve what may be an important step towards a conjectural operadic recognition principle for motivic infinite loop spaces \cite{vopen}.

Let $\sT$ denote the Morel-Voevodsky model category of algebraic spaces, this will be our category of \emph{spaces}. In analogy with sequential $\OM$-spectra in topology, by \emph{spectra} we could pick Voevodsky's coordinatized $\bT$-spectra. However, for convenience with respect to the loop and suspension adjunction, we instead must work in a coordinate-free setting developed by \cite{Hu03}.
We will call Hu's model category of coordinate-free spectra $\sS$, and refer to its objects as \emph{spectra}. The functor $\OM^{\infty} \colon \sS \rtarr \sT$, obtained by taking the ``zeroth'' space of a spectrum, is right adjoint. The \emph{infinite loop spaces} will be those spaces that are in the image of $\OM^{\infty}$. The left adjoint of $\OM^{\infty}$, denoted $\SI^{\infty} \colon \sT \rtarr \sS$, is the motivic suspension spectrum in our setting.
The adjunction $(\SI^{\infty},\OM^{\infty})$ is what we refer to as the \emph{stable adjunction} in motivic homotopy theory, and it is the central object of study in this article.

Let $\GA = \OM^{\infty} \SI^{\infty}$ denote the monad associated to the adjunction $(\SI^{\infty},\OM^{\infty})$. Our first theorem utilizes the framework of \cite{KMZ25} to produce a first step towards homotopical monadicity for the stable motivic adjunction. Analogous to the topological story, the main results here pertain to the subcategory $\sS_c$ of \emph{connective} spectra (also called \emph{very effective} spectra elsewhere), which forms the homologically positive part of a $t$-structure on $\sS$.
\begin{restatable}{thm}{maintheorem}\label{thm:0.1}
    There is a functor $\bE \colon \GA[\sT] \rtarr \sS_c$ and a functor $\mathrm{Bar} \colon \GA[\sT] \rtarr \sT$, denoted $Y \mapsto \ol{Y}$, with the following properties.
    \begin{enumerate}
        \item $\bE \colon \GA[\sT] \rtarr \sS_c$ and $\OM^{\infty}_{\GA} \colon \sS_c \rtarr \GA[\sT]$ preserve weak equivalences. 
        \item For $E \in \sS_c$ and $Y \in \GA[\sT]$, we have a natural homotopy equivalence $\ze$ and weak equivalences $\ga$ and $\xi$ in the following directions:
    \[\xymatrix@1{Y & \ar[l]_-{\ze} \overline{Y} \ar[r]^-{\ga} & \OM^{\infty}_{\GA}\bE Y}\]
    \[\xymatrix@1{ \bE \OM^{\infty}_{\GA} E \ar[r]^-{\xi} & E.}\]
    \end{enumerate}
\end{restatable}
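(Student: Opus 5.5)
I will follow the two-sided monadic bar construction of \cite{May72,May09Einfty}, in the abstract form packaged by \cite{KMZ25}. Given a $\GA$-algebra $Y$ in $\sT$, form the simplicial spectrum $B_\bullet(\SI^\infty,\GA,Y)$ with $q$-simplices $\SI^\infty\GA^qY$. Its faces come from the counit $\SI^\infty\OM^\infty\to\mathrm{Id}$ (which gives the right $\GA$-functor structure on $\SI^\infty$), from the monad product, and from the action $\GA Y\to Y$. Then set
\[
\bE Y = |B_\bullet(\SI^\infty,\GA,Y)|
\]
(after a cofibrant replacement so that realization is homotopical), and
\[
\ol{Y}=|B_\bullet(\GA,\GA,Y)|,
\]
the realization in simplicial motivic spaces. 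The map $\ze\colon \ol Y\to Y$ is induced by the action. The extra degeneracy coming from the unit $Y\to\GA Y$ gives a simplicial homotopy equivalence, and realization turns this into a genuine homotopy equivalence, not merely a weak one. Since each $\SI^\infty\GA^qY$ is a suspension spectrum, hence connective, and connective spectra are closed under colimits and realization, $\bE Y$ lands in $\sS_c$. For the map $\ga$, note that $B_\bullet(\GA,\GA,Y)=\OM^\infty B_\bullet(\SI^\infty,\GA,Y)$ levelwise. The natural comparison $|\OM^\infty X_\bullet|\to\OM^\infty|X_\bullet|$ gives $\ga\colon \ol Y\to\OM^\infty\bE Y$. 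It is a map of $\GA$-algebras once $\ol Y$ is given its evident $\GA$-action, so it lands in $\OM^\infty_\GA\bE Y$.

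The core of the proof is therefore a list of simplicial motivic inputs matching the hypotheses of \cite{KMZ25}.
\begin{enumerate}
\item Realization of levelwise weak equivalences of Reedy-cofibrant simplicial spaces and spectra is a weak equivalence.
\item $\SI^\infty$ commutes with realization.
\item $\GA$ preserves weak equivalences between cofibrant spaces.
\item Connective spectra are closed under realization.
\item Simplicial homotopy equivalences realize to homotopy equivalences.
\item The main point: for a Reedy-cofibrant simplicial spectrum $X_\bullet$ that is levelwise connective, the map $|\OM^\infty X_\bullet|\to\OM^\infty|X_\bullet|$ is a weak equivalence.
\end{enumerate}
Item (1) together with (3) immediately gives part (1) of the theorem for $\bE$, and for $\OM^\infty_\GA$ on $\sS_c$ (using fibrant replacement).

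Item (6) is the main obstacle. In topology it follows from the fact that realization commutes with $\OM$ on levelwise connected spaces, via a fibration-realization argument due to Bousfield--Friedlander. Motivically I would try the following. First reduce to showing, for each $n$, that realization commutes with $\OM_{\bT}$ or $\OM_{S^1}$ on suitably connected simplicial spaces. Then use Morel's stable/unstable connectivity theorem to control the homotopy sheaves, checking on stalks and henselian local schemes so that the sheaf-theoretic Bousfield--Friedlander $\pi_*$-Kan condition holds. Connectivity (very effectiveness) is exactly what supplies the needed $\bA^1$-connectedness, and $\bG_m$-loops would be handled by the contractions in Hu's coordinate-free framework. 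This is the step I expect to require genuinely new work. The weaker form asserted in the theorem, with $\ga$ only a weak equivalence, is what the argument aims for.

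Finally, $\ga$ is a weak equivalence because it is precisely the comparison map of item (6) applied to $X_\bullet=B_\bullet(\SI^\infty,\GA,Y)$, which is levelwise connective. For $\xi$, take $Y=\OM^\infty E$ and let $\xi$ be the map
\[
|B_\bullet(\SI^\infty,\GA,\OM^\infty E)|\to E
\]
induced by the counit. To see it is a weak equivalence, apply $\OM^\infty$ and use item (6) together with the extra degeneracy of $B_\bullet(\GA,\GA,\OM^\infty E)\to\OM^\infty E$. This shows $\OM^\infty\xi$ is a weak equivalence. To upgrade from this, use that $\OM^\infty$ reflects weak equivalences between connective spectra: both source and target lie in $\sS_c$, and maps of very effective spectra are detected on $\OM^\infty$ of suitable shifts and twists. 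Applying the same argument to $\OM^\infty\SI^{V}$ for representations $V$ via the coordinate-free structure would give this detection.
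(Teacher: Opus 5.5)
Your reduction of the theorem to simplicial inputs is the paper's own. You get $\ze$ from the extra degeneracy and \textbf{\ref{salist:SA2}}, $\bE Y\in\sS_c$ from \textbf{\ref{salist:SA5}}, part (1) from \textbf{\ref{salist:SA3}}, and $\ga$ from \textbf{\ref{salist:SA6}}. You get $\xi$ from $\OM^{\infty}\xi\circ\ga=\ze$, two-out-of-three, and reflection of weak equivalences by $\OM^{\infty}$ on $\sS_c$ (Lemma~\ref{lem:4.3}, Proposition~\ref{prop:2.13}). Your first step for item (6), passing to finite loop functors and then to the colimit, also matches.

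\textbf{The gap is item (6) itself.} This is the paper's main technical ingredient, and you leave it as a program whose key step would not work as described. Bousfield--Friedlander's $\pi_*$-Kan theorem applied on Nisnevich stalks gives a statement in the simplicial (Nisnevich-local) homotopy theory. Here, however, connectivity, homotopy fiber sequences and derived loop spaces are all $\bA^1$-local notions, and the stalkwise statement does not transfer:
\begin{enumerate}
\item $\bA^1$-weak equivalences are not detected on stalks.
\item The realization of levelwise $\bA^1$-fibrant objects is not $\bA^1$-local.
\item $\bA^1$-localization does not preserve homotopy pullbacks in general.
\end{enumerate}
So you would also need a theorem that $\bA^1$-localization commutes with the relevant loop functor on suitably connected objects. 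Naming Morel's connectivity theorem points in that direction but states and uses nothing precise. ``Contractions in Hu's framework'' does not address the $\bG_m$-direction at all.

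The paper argues differently. It takes $\OM=\underline{\Hom}_{\sT}(S^{\bA^1},-)$ and compares $|\OM_*X_*|\to|P_*X_*|\to|X_*|$ with $\OM|X_*|\to P|X_*|\to|X_*|$ (Lemma~\ref{lem:3.1.2}). It shows the former is a homotopy fiber sequence via Rezk's criterion (Theorem~\ref{thm:rezk}): realization preserves homotopy pullbacks over a base with levelwise discrete $\pi_0^{\bA^1}$, proved from descent, siftedness of $\DE^{\op}$ and local projections. It then passes to fibrant replacements. The paper mentions the $\pi_*$-Kan route and deliberately sets it aside.

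\textbf{Two further errors.}
\begin{enumerate}
\item $\ga$ is not a map of $\GA$-algebras for any ``evident'' action on $\ol{Y}$. Here $\ol{Y}=|X_*|$ for the simplicial $\GA$-algebra $X_*=\OM^{\infty}_*\bE_*Y$. The only comparison available, $\ga\colon|\GA_*X_*|\to\GA|X_*|$, points the wrong way and is merely a weak equivalence, so there is no canonical action. This is exactly why the statement has Bar land in $\sT$, and why Theorem~\ref{thm:0.2} needs linking maps (Proposition~\ref{prop:linking}). The claim is harmless here only because $\ga$ is needed just as a map in $\sT$.
\item Your fallback justification of reflection, rerunning the bar argument on $\OM^{\infty}\SI^V$, fails. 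The extra degeneracy of $\OM^{\infty}\hat{\GA}^{*+1}E\to\OM^{\infty}E$ comes from $\et\OM^{\infty}$, and it has no analogue after $\OM^{\infty}\SI^V$. Reflection has to be proved directly for connective spectra, as in Proposition~\ref{prop:2.13}.
\end{enumerate}
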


Theorem \ref{thm:0.1} however, does not establish an equivalence between the homotopy categories of $\GA$-algebras and connective spectra, and as such cannot be branded as homotopical monadicity. The functor $\mathrm{Bar}$ in the theorem alludes to the two-sided monadic bar resolution of $\GA$-algebras, built as the realization of a simplicial $\GA$-algebra---and as such we would have liked a functor $\mathrm{Bar} \colon \GA [\sT] \rtarr \GA[\sT]$. Similarly, we may have also liked for the weak equivalences in the theorem to 
be maps of $\GA$-algebras. In such a setting, we could immediately descend to homotopy categories to conclude that $(\bE,\OM^{\infty}_{\GA})$ induces an equivalence between homotopy categories. Unfortunately, due to a profound noncommutativity between $\OM^{\infty}$ and the realization of simplicial objects, there is no canonical $\GA$-action on $\ol{Y}$ for a $\GA$-algebra $Y$. So we do not quite have monadicity up to homotopy.

We resolve this gap by developing an abstract linking theory for monads connected by op-lax maps. Instantiating this abstract theory to our setting, we are able to concretely compare $\GA$-algebras and simplicial $\GA$-algebras. As a consequence, we show that although $\ol{Y}$ is not a $\GA$-algebra, it is the realization of a simplicial $\GA$-algebra whose algebra structure is \emph{linked} via realization to that of the $\GA$-algebras $\OM^{\infty}_{\GA} \bE Y$ and $Y$. We also introduce the idea of \emph{linked} maps between algebras. These are weakenings of algebra maps that still retain some monadic structure,
by virtue of being maps \emph{over} a linked algebra. Our second theorem is a strengthening of Theoerem \ref{thm:0.1} with this linked language.

\begin{restatable}{thm}{maintheoreM}\label{thm:0.2}
Let $(Y,\theta)$ be a $\GA$-algebra. There are maps
\[
\xymatrix@1{
    Y &  \overline{Y} \ar[r]^-{\ga} \ar[l]_-{\ze} & \OM^{\infty}_{\GA} \bE Y 
}
\]
\noindent where the following hold.
\begin{enumerate}
    \item Forgetting monadic structures, $\ze$ is a homotopy equivalence and $\ga$ is a weak equivalence.
    \item $\ze$ and $\ga$ are maps that link $\overline{Y}$ to the $\GA$-algebras $Y$ and $\OM^{\infty}_{\GA}\bE Y$ respectively.
    \item For $\nu$ the homotopy inverse of $\ze$, the composite $\ga \circ \nu$ is a linked weak equivalence between $Y$ and $\OM^{\infty}_{\GA} \bE Y$.
\end{enumerate}
\end{restatable}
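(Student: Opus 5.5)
The plan is to read Theorem \ref{thm:0.2} as Theorem \ref{thm:0.1} plus bookkeeping, carried out with the op-lax linking framework the introduction promises. The approach follows the two-sided monadic bar construction of \cite{KMZ25}. For a $\GA$-algebra $(Y,\theta)$, form the simplicial object $B_\bullet(\GA,\GA,Y)$ with $q$-simplices $\GA^{q+1}Y$, faces given by the monad product and $\theta$, and degeneracies by the unit. Set $\ol{Y} = |B_\bullet(\GA,\GA,Y)|$. Levelwise this is a simplicial $\GA$-algebra, since each $\GA^{q+1}Y$ is a free algebra and all faces except the last are algebra maps (the last is one too, because $\theta$ is an algebra map $\GA Y \rtarr Y$).

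The map $\ze \colon \ol{Y} \rtarr Y$ is the realization of the augmentation $\GA^{q+1}Y \rtarr Y$. The augmented simplicial object has an extra degeneracy given by the unit, defined in $\sT$ but not on algebras. The standard argument therefore gives a simplicial homotopy equivalence after forgetting structure, whose realization is a homotopy equivalence with inverse $\nu$ induced by the unit. Next set $\bE Y = |B_\bullet(\SI^\infty,\GA,Y)|$, realized in $\sS$ and lying in $\sS_c$ since it is built from suspension spectra. The map $\ga$ is the composite
\[
|\OM^\infty B_\bullet(\SI^\infty,\GA,Y)| \rtarr \OM^\infty|B_\bullet(\SI^\infty,\GA,Y)|,
\]
where the source is $\ol{Y}$ levelwise. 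That this is a weak equivalence is the weak commutativity theorem between $\OM^\infty$ and realization of simplicial motivic spectra. I would apply it using levelwise connectivity, taken from the six preliminary simplicial results, which I am allowed to assume. This gives part (1).

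For part (2), I will make ``link'' precise through the op-lax framework. Realization $|\cdot|$ and the monad $\GA$ are related by an op-lax comparison $\la \colon |\GA X_\bullet| \rtarr \GA|X_\bullet|$. The map exists, because $\SI^\infty$ commutes with realization and $\OM^\infty$ receives the canonical map, but it is not an isomorphism. Being ``linked'' then means the following: $\ol{Y}$ is the realization of a simplicial $\GA$-algebra $X_\bullet$, and the map $f \colon |X_\bullet| \rtarr Z$ to a $\GA$-algebra $Z$ makes the square formed by $|\GA X_\bullet| \rtarr |X_\bullet|$, $\la$, $\GA f$, $Z$'s action, and $f$ commute. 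For $Z = Y$ this reduces to checking, levelwise, that the augmentation $\GA^{q+1}Y \rtarr Y$ is a $\GA$-algebra map, which follows by iterating $\theta$ and naturality of $\la$. For $Z = \OM^\infty_\GA\bE Y$, the $\GA$-action on $\OM^\infty$ is $\OM^\infty\epz$. The required square commutes by naturality of the assembly map $|\OM^\infty E_\bullet| \rtarr \OM^\infty|E_\bullet|$ with respect to $\epz$, together with compatibility of $\la$ with that assembly. The latter is a coherence diagram between two op-lax structures, and I expect it to be verified directly from the definitions.

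Part (3) is the step I expect to be the main obstacle. The map $\nu$ is not itself linked, since the extra degeneracy is not an algebra map. So the linked weak equivalence $\ga\circ\nu$ has to be understood as a zigzag over the linked algebra $\ol{Y}$: a span of linked maps, one of which, $\ze$, is a homotopy equivalence of underlying spaces. The task is then to show that the linked-map notion from the framework is closed under this composition. My first attempt is to define linked maps $Y \rightsquigarrow Z$ as spans $Y \leftarrow |X_\bullet| \rtarr Z$ of linking maps, with the left leg an underlying equivalence. Then (3) is immediate from (1) and (2), and the remaining work is checking that the definition is functorial and respects weak equivalences.
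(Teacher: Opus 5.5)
Your parts (1) and (2) follow the paper's route. You get $\ze$ from the extra degeneracy (Lemma \ref{lem:4.1}) and $\ga$ from \textbf{\ref{salist:SA6}} applied to the levelwise connective $\bE_*Y$. You treat $(|-|,\ga)$ as an op-lax map $\GA_*\to\GA$, and the linking square for $\ga$ is checked by passing to adjoints (Proposition \ref{prop:linking}), which is exactly the coherence you defer.

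One ingredient is missing from your $\ze$ step. Naturality of $\lambda$ along $\ze_*\colon B_*(\GA,\GA,Y)\to c_*Y$ only gives $\GA\ze\circ\lambda_{B_*(\GA,\GA,Y)}=\lambda_{c_*Y}\circ|\GA_*\ze_*|$. To turn ``$\ze_*$ is levelwise a $\GA_*$-algebra map'' into the linking square, you also need $\lambda_{c_*Y}=\mathrm{id}_{\GA Y}$ under $|\GA_*c_*Y|=|c_*\GA Y|\cong\GA Y$. The paper isolates this as the statement that $(c_*,\mathrm{id})$ is a section of $(|-|,\ga)$. It checks this by unwinding the adjunction: $|(\epz_*)_{c_*\SI^{\infty}Y}|=\epz_{\SI^{\infty}Y}$, whose adjoint is $\mathrm{id}_{\GA Y}$. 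Remark \ref{ex:3.10} then shows that $|f_*|$ is a linking map for every $\GA_*$-algebra map $f_*\colon X_*\to c_*Y$, in particular for $\ze_*$.

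For (3), you have replaced the definition rather than proved the stated claim. The paper's Definition \ref{def:linked} calls a \emph{map} $f\colon Y'\to Y$ linked if there are linking maps $\rho\colon FX\to Y'$ and $\be\colon FX\to Y$ with $\be=f\circ\rho$. No functoriality or closure under composition is needed. The intended witnesses are just $\rho=\ze$ and $\be=\ga$ from (2), and the weak-equivalence part comes from (1).

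What must be checked is $\ga=(\ga\circ\nu)\circ\ze$, and this has a subtlety the paper passes over. The identity $\ze\circ\nu=\mathrm{id}_Y$ holds on the nose, since $\theta\circ\et=\mathrm{id}$. But $\nu\circ\ze$ is only homotopic to $\mathrm{id}_{\overline{Y}}$ via the extra degeneracy; it is realized from $\nu_q\circ\ze_q$, which in degree $0$ is $\et_Y\circ\theta$. So with these witnesses the factorization holds only up to homotopy. Your span formulation, with $\ze$ as the left leg, is exactly the version that holds strictly, so it is a reasonable repair. You should say explicitly that you are proving (3) for spans, equivalently for $\ga\circ\nu$ as a morphism in the homotopy category. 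Otherwise, prove the paper's factorization up to homotopy instead of presenting your span notion as the paper's definition.
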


Theorem \ref{thm:0.2} allows us to make the following concrete statement about homotopy categories, a precisification of being ``monadic up to linked $\bA^1$-homotopy.''

\begin{restatable}{cor}{maincor}\label{cor:0.3}
    The full subcategory of $\GA$-algebras in $\sT$ localized at the linked weak equivalences is equivalent to the homotopy category of connective spectra.
\end{restatable}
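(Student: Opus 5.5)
The plan is to deduce Corollary \ref{cor:0.3} formally from Theorems \ref{thm:0.1} and \ref{thm:0.2}. The functors $\bE$ and $\OM^{\infty}_{\GA}$ will descend to functors between the localized categories, and the natural zigzags supplied by those theorems will then become natural isomorphisms.

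Write $\mathcal{L}$ for the localization of $\GA[\sT]$ at the linked weak equivalences, and $\mathrm{Ho}(\sS_c)$ for the localization of $\sS_c$ at the stable weak equivalences. The steps are as follows.

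First, show that $\OM^{\infty}_{\GA}\colon \sS_c \rtarr \GA[\sT]$ descends to a functor $\mathrm{Ho}(\sS_c)\rtarr \mathcal{L}$. By Theorem \ref{thm:0.1}(1), $\OM^{\infty}_{\GA}$ carries weak equivalences of connective spectra to maps of $\GA$-algebras whose underlying maps are weak equivalences. A genuine $\GA$-map is trivially linked, since it lies over the algebra itself. So these images are linked weak equivalences and become invertible in $\mathcal{L}$.

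Second, show that $\bE$ descends to a functor $\mathcal{L}\rtarr \mathrm{Ho}(\sS_c)$. Theorem \ref{thm:0.1}(1) shows that $\bE$ inverts genuine weak equivalences of algebras. The same must be checked for linked weak equivalences. A linked weak equivalence $f\colon Y\rtarr Y'$ is presented over a simplicial $\GA$-algebra whose realization maps to both $Y$ and $Y'$. Since $\bE$ is built as a bar construction, it is compatible with realization of simplicial $\GA$-algebras. I therefore expect $\bE$ of a linked map to be expressible as a zigzag of maps of spectra, each a stable equivalence levelwise or after realization. This makes $\bE f$ an isomorphism in $\mathrm{Ho}(\sS_c)$.

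Third, produce the two natural isomorphisms.
\begin{itemize}
\item For $E\in\sS_c$, the weak equivalence $\xi\colon \bE\OM^{\infty}_{\GA}E\rtarr E$ gives $\bE\OM^{\infty}_{\GA}\cong \mathrm{id}$ on $\mathrm{Ho}(\sS_c)$.
\item For $Y\in\GA[\sT]$, Theorem \ref{thm:0.2}(3) gives a linked weak equivalence $\ga\circ\nu\colon Y\rtarr \OM^{\infty}_{\GA}\bE Y$. This is an isomorphism in $\mathcal{L}$.
\end{itemize}
Naturality of $\ga\circ\nu$ in $\mathcal{L}$ follows from naturality of $\ze$ and $\ga$ in $Y$, together with the fact that $\nu$ inverts $\ze$ up to homotopy. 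Homotopic maps should agree in $\mathcal{L}$, because $\ze$ is itself a linked homotopy equivalence.

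The main obstacle is the second step: showing that linked weak equivalences are sent by $\bE$ to stable equivalences, and that the linked maps form a category in which composition is well defined. If the paper defines linked maps so that $\bE$ is applied to the linking simplicial algebra and then realized, I would argue directly using the commutation of $\SI^{\infty}$ with realization; note that only $\OM^{\infty}$ fails to commute with realization. The remaining verifications are formal properties of localizations.
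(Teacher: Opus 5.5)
\emph{Verdict: there is a genuine gap, at your Step 2.}

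\textbf{What matches the paper.} Your Steps 1 and 3 are the formal deduction the paper intends; it gives no argument beyond calling the corollary a direct consequence of Theorem~\ref{thm:0.2}. Step 1 is fine: by Remark~\ref{ex:3.10}, a $\GA$-map $f\colon Y'\rtarr Y$ is linked through $X_*=c_*Y'$ with $\rho=\id$ and $\be=f$.

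\textbf{Step 2 is false for linked maps as defined.} You only state Step 2 as an expectation, and it cannot be proved for linked maps as defined in Definition~\ref{def:linked}. Since $\SI^{\infty}$ preserves the zero object, $\GA(*)=*$. So $c_*(*)$ is a simplicial $\GA$-algebra with $|c_*(*)|=*$. The basepoint inclusions $\rho\colon *\rtarr Y'$ and $\be\colon *\rtarr Y$ are then linking maps for trivial reasons: both composites in the linking square are maps out of the zero object. Because $\be=f\circ\rho$ for \emph{every} based map $f\colon Y'\rtarr Y$, every map in $\sT$ between $\GA$-algebras is linked, and every underlying weak equivalence is a linked weak equivalence.

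Now take connective $E,E'$ and any weak equivalence $g\colon \OM^{\infty}E\rtarr\OM^{\infty}E'$. Any extension of $\bE$ to the localization would give, via $\xi$,
\[E\cong\bE\OM^{\infty}_{\GA}E\cong\bE\OM^{\infty}_{\GA}E'\cong E'.\]
So connective spectra would be determined by their zeroth spaces. Nothing makes this hold, and its classical analogue is false: the fiber of $Sq^2\colon\SI H\mathbb{F}_2\rtarr\SI^3H\mathbb{F}_2$ is not equivalent to $\SI H\mathbb{F}_2\vee\SI^2H\mathbb{F}_2$, yet both have zeroth space $K(\mathbb{F}_2,1)\times K(\mathbb{F}_2,2)$, because $Sq^2\iota_1=0$. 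Moreover, $\bE f$ is not even defined for a linked $f$ that is not a $\GA$-map. So you must specify the category being localized, including how linked maps compose in it, before Step 2 makes sense.

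\textbf{What a repair needs.} The missing ingredient is a hypothesis on the legs of the span. Require $\rho$, and hence $\be$, to be a weak equivalence, as $\ze$ and $\ga$ are in Theorem~\ref{thm:0.2}. Then your expected zigzag exists and consists of stable equivalences. For a linking map $\be\colon|X_*|\rtarr Y$, take the composites $\SI^{\infty}|\GA^q_*X_*|\rtarr\SI^{\infty}\GA^q|X_*|\rtarr\SI^{\infty}\GA^qY$ of the iterated $\ga$ with $\GA^q\be$.
\begin{itemize}
\item They commute with the bar faces and degeneracies. This uses the op-lax diagrams of Proposition~\ref{prop:linking}, the linking square, and the identity $\epz\circ\SI^{\infty}\ga=|\epz_*|$ that defines $\ga$.
\item Via \textbf{\ref{salist:SA4}} they realize to a map into $\bE Y$ from the realization of the bisimplicial spectrum $B_q(\SI^{\infty},\GA,X_p)$.
\item When $\be$ is a weak equivalence, this map is a stable equivalence, by iterating \textbf{\ref{salist:SA6}} and applying \textbf{\ref{salist:SA3}}.
\end{itemize}
You would still need to show the result is independent of the chosen span and compatible with composition.

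With this restricted notion, Step 3 should use the span $Y\xleftarrow{\ze}\ol{Y}\xrightarrow{\ga}\OM^{\infty}_{\GA}\bE Y$ itself rather than $\ga\circ\nu$. The reason is that $\ga\circ\nu\circ\ze$ agrees with $\ga$ only up to homotopy, so $\ga\circ\nu$ is not linked through $\ol{Y}$ on the nose.

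\textbf{Drop the claim that homotopic maps agree in $\mathcal{L}$.} Combined with Step 2 and naturality of $\xi$, it would make $\OM^{\infty}$ faithful on $\mathrm{Ho}(\sS_c)$. That fails in the classical analogue: $Sq^2\colon H\mathbb{F}_2\rtarr\SI^2H\mathbb{F}_2$ is nonzero, while its zeroth-space map is null. You do not need the claim for naturality with respect to $\GA$-maps anyway, since $\nu$ (induced by the iterated units) and $\ga$ are strictly natural.
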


The paper is structured as follows. In Section \ref{sec:prelim} we discuss preliminaries. Sections \ref{subsec:mvf}, \ref{sec:coordfree}, and \ref{subsec:local} offer relevant background on Morel-Voevodsky algebraic spaces and on Po Hu's coordinate-free spectra. Section \ref{subsec:adjunction} defines the stable adjunction and elaborates on some of its formal and homotopical properties, particularly with respect to the category $\sS_c$ of connective spectra.
The proofs of the main theorems, owing to \cite{KMZ25}, rely on methods from simplicial homotopy theory. In Section \ref{subsec:salist}, we outline five properties of simplicial objects (\textbf{\ref{salist:SA1}}-\textbf{\ref{salist:SA5}}) that are essential to the proofs. The verification of these properties constitute all of Section \ref{sec:simpob}.
Properties \textbf{\ref{salist:SA1}}-\textbf{\ref{salist:SA5}} are easier to prove of the lot, and are verified in Section \ref{subsec:simpobez}. The hardest and perhaps conceptually most insightful property in the list is \textbf{\ref{salist:SA6}}, which describes the weak commutativity of realization and $\OM^{\infty}$. This is precisely the source for the weak commutativity of $\GA$ and $|-|$. The analogous result in topology plays a crucial role in the operadic recognition principle for infinite loop spaces \cite{May72}, and as such we believe
resolves an important step towards an operadic recognition principle for motivic infinite loop spaces. Pursuing this line of inquiry towards operadic monads is the subject of a sequel paper. \textbf{\ref{salist:SA6}} is proved in Section \ref{subsec:simpobhard} and Section \ref{subsec:rf}, where Rezk's reformulation of the compatibility of realizations with homotopy pullbacks \cite{Rezk14} plays a central part.
With the preliminaries in place, Section \ref{sec:thm} develops the required bar resolutions of $\GA$-algebras, and an abstract linking theory to complete the proofs of Theorems \ref{thm:0.1} and \ref{thm:0.2}. 



\section*{Acknowledgements}
Much of this article was developed while I was a visitor at the University of Chicago in the summers of 2024 and 2025. 
I am indebted to Peter May for his support through the summer program at Chicago, and for his efforts in this project, which was very much joint work with him.
My heartfelt gratitude also goes out to Alicia Lima, who co-advised me through the summer of 2024. I thank Paul Goerss for clarifications on simplicial path-space fibrations and Aravind Asok for pointing me to references on the commutativity of realization and homotopy pullback, and for continued discussion over the years on the social history of mathematics. 
The author was partially supported by NSF grant DGE-2241144 in the preparation of this draft.

\section{Preliminaries}\label{sec:prelim}
\subsection{The Morel-Voevodsky framework}\label{subsec:mvf}
This section recalls relevant details of the model category of algebraic spaces constructed by Morel and Voevodsky in \cite{MV99}. Let $k$ be a perfect field and $S$ a smooth affine integral scheme of finite type over $k$ (in particular, we ask that $S = \Spec(R)$ for a Noetherian integral domain $R$). We define $\mathrm{Sm}_S$ to be the category of smooth schemes of finite type over $S$. Whenever we refer to a scheme, we mean an object of $\mathrm{Sm}_S$. The category $\mathrm{Sm}_S$ is infeasible for homotopy theory since it is not cocomplete, so one cannot use the objects of $\mathrm{Sm}_S$ to mean ``spaces.'' The fix for this is to consider simplicial sheaves over $\mathrm{Sm}_S$ as spaces. To speak of sheaves, one endows $\mathrm{Sm}_S$ with an appropriate topology. \cite{MV99} identified the Nisnevich topology, finer than the Zariski but coarser than the \'Etale topology on $\mathrm{Sm}_S$, to be a convenient choice. It is defined so that the covering sieves are those families of \'etale morphisms $\phz_i \colon \{U_i\} \rtarr X$ such that for any $x \in X$, there exists $i$ and $u \in U_i$ with the corresponding map on residue fields being an isomorphism (that maps to $x$ with the same residue field). 
We write $(\mathrm{Sm}_S)_{Nis}$ to denote the site of $\mathrm{Sm}_S$ equipped with the Nisnevich topology. The definition of the Morel-Voevodsky category of spaces over $S$ is then\footnote{Notation: for a Grothendieck site $(\aC,\tau)$ we write $\mathrm{sSh}(\aC_{\tau})$ to mean the category of simplicial $\mathbf{Set}$-valued sheaves on $\aC$. When there is no confusion, we will call the objects of $\mathrm{sSh}(\aC_{\tau})$ simplicial sheaves on $\aC$.}: \[\mathrm{Spc}(S) := \sSh((\mathrm{Sm}_S)_{Nis}).\]

We can also define the category of based spaces over $S$ to be $\mathrm{Spc}(S)_{\bu} = S \downarrow \mathrm{Spc}(S)$. We write $\sT$ to mean this category and we call its objects spaces. The presheaf represented by an object $X \in \mathrm{Sm}_S$ is in fact a sheaf in the Nisnevich topology (this is due to \cite[VII.2a]{SGA4V2}). We also use $\sT_{k}$ to mean the category $\mathrm{Spc}(k)_{\bu}$ of spaces over $k$.\\


Now we look at some categorical properties of $\sT$ before moving on to its homotopical properties and its model structure. First, $\sT$ is complete and cocomplete. Second, $\sT$ is tensored and cotensored over itself. That is to say, there is a smash product $\wedge \colon \sT \times \sT \rtarr \sT$, and there are internal Hom-space functors $\underline{\Hom}_{\sT} (X, -)$ (right adjoint to $- \wedge X \colon \sT \rtarr \sT$) defined in the usual way. Finally, by the co-Yoneda lemma we know that every presheaf over $\mathrm{Sm}_S$ is a small colimit of schemes. Passing to sheafification, which is left adjoint, we can conclude that every Nisnevich sheaf over $\mathrm{Sm}_S$ is a small colimit of schemes. Up to homotopy, we can make a similar statement about how to build simplicial sheaves. The following compactness property \cite[Lemma 17.2]{Hu03} is often used in conjunction with co-Yoneda type formulae to reduce some Hom-computations down to the scheme level (for example, with the case of spectrification in Section \ref{sec:coordfree}).  
\begin{prop}\label{prop:1.2}
Every object of $\mathrm{Sm}_S$ is compact in $\sT$
\end{prop}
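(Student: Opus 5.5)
The claim is that for $X \in \mathrm{Sm}_S$ the functor $\Hom_{\sT}(X_+, -)$ commutes with filtered colimits; equivalently, evaluation at $X$ commutes with filtered colimits in $\sT$. Here the representable based space is $X_+$, or $X$ itself when $X$ comes with a section over $S$. Compactness in the simplicial direction is automatic for a simplicially discrete representable. By Yoneda, $\Hom(X_+, F) = F(X)_0$, and a map of simplicial sheaves out of the constant simplicial object on $X$ factors through degree $0$. So the statement reduces to the following: for a filtered diagram $F_i$ of Nisnevich sheaves of sets (or of pointed sets) on $\mathrm{Sm}_S$, the natural map
\[
\mathrm{colim}_i F_i(X) \rtarr \big(\mathrm{colim}_i F_i\big)(X)
\]
is a bijection. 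The colimit on the right is computed in sheaves, i.e.\ as the sheafification of the presheaf colimit.

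The strategy is to show that the presheaf colimit $P = \mathrm{colim}_i F_i$ (computed objectwise) is already a Nisnevich sheaf. Then sheafification does nothing and the map above is the identity. I would use the standard description of the Nisnevich topology on $\mathrm{Sm}_S$ (Morel--Voevodsky): since $S$ is Noetherian and every scheme in $\mathrm{Sm}_S$ is of finite type, a presheaf is a sheaf if and only if it sends the empty scheme to a point and sends elementary distinguished squares to pullbacks. Each F_i satisfies this. Filtered colimits of sets commute with finite limits, in particular with the fiber product $F(U)\times_{F(U\times_X V)}F(V)$ and with the terminal object. Hence $P$ also sends elementary distinguished squares to pullbacks, so $P$ is a sheaf.

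An alternative route avoids the distinguished-square criterion. Every Nisnevich covering of a quasi-compact $X$ can be refined to a finite one, and the fiber products of members of a finite cover are again quasi-compact. The sheaf condition for a finite cover is an equalizer of finite products, which is a finite limit, and so again commutes with filtered colimits. Checking it on the refining finite covers suffices, because the sheaf property for $P$ need only be verified on a cofinal family of covers.

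The main obstacle is justifying this finiteness reduction: either citing the Morel--Voevodsky characterization via distinguished squares, which needs $S$ Noetherian of finite Krull dimension (our hypotheses give this), or arguing quasi-compactness of $X$ and of the overlaps in the Nisnevich topology. Once that is in place, the rest is the formal interchange of filtered colimits with finite limits in $\mathbf{Set}$, applied levelwise in the simplicial degree and compatibly with basepoints.
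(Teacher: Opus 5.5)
Your argument is correct, and it supplies something the paper does not. The paper gives no proof of this statement; it only cites \cite[Lemma 17.2]{Hu03}.

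Your reduction goes as follows. Filtered colimits in $\sT$ are computed degreewise in the underlying simplicial sheaves. By Yoneda, maps out of the simplicially constant $X_+$ are $F_0(X)$. So it suffices that an objectwise filtered colimit of Nisnevich sheaves is already a sheaf. This holds because on $\mathrm{Sm}_S$ the sheaf condition is a finite-limit condition, either via elementary distinguished squares or via finite covers. Finite covers suffice because every Nisnevich cover of a Noetherian scheme has a finite subcover, and finite covers form a pretopology generating the topology. This is the standard self-contained proof, and it isolates exactly the input used: the Nisnevich topology on a Noetherian base is finitary.

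Two remarks.

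For a scheme with a chosen section $s$, $\mathrm{Hom}_{\sT}((X,s),F)$ is the fiber of $F_0(X)\to F_0(S)$ over the basepoint. This is again a finite limit (equivalently, $(X,s)$ is the cofiber of $S_+\to X_+$), so that case costs one more line.

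More substantively, you prove compactness in the point-set sense. That is precisely what Proposition~\ref{prop:2.1} uses. There $S^T$ is a finite colimit of representables, and $\underline{\mathrm{Hom}}_{\sT}(S^T,-)$ evaluated at $(U,[n])$ is corepresented by $S^T\wedge(U\times\DE[n])_+$. That case is handled by the same Yoneda argument in simplicial degree $n$.

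The paper, however, also invokes Proposition~\ref{prop:1.2} in the proof of Proposition~\ref{prop:2.13}, to commute homotopy classes $[X_+,-]$ with a filtered colimit. That homotopical compactness does not follow from your statement alone. It additionally requires that filtered colimits be homotopy colimits, and that fibrant replacement (Nisnevich-local and $\bA^1$-local) commute with them up to weak equivalence. Those facts rest on the same finitary descent you isolated, but they are not formal consequences of it.
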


We now recall aspects of the model structure on $\sT$ and the notion of an $\bA^1$-homotopy defined in \cite{MV99}. We write $\bA^1$ to mean $\bA^1_S$, the affine line over $S$. Two maps $f, g \colon X \rtarr Y$ in $\sT$ are said to be $\bA^1$-homotopic if there is a map $h \colon \bA^1 \times X \rtarr Y$ in $\mathrm{Spc}(k)$, such that the composite $X \xrightarrow{\io_0} \bA^1 \times X \xrightarrow{h} Y$ is $f$, and $X \xrightarrow{\io_1} \bA^1 \times X \rtarr Y$ is $g$ (where $\io_j$ is induced by the inclusion of $S$ into $\bA^1_S$ as $j$). 

\begin{defn}
A map $f \colon X \rtarr Y$ in $\sT$ is said be a:
\begin{enumerate}
\item simplicial cofibration, if it is a monomorphism.
\item simplicial weak equivalence if for every $x^* \in \mathrm{Sm}_S$, the induced map\\ $f^* \colon x^*(X) \rtarr x^*(Y)$ is a weak equivalence of simplicial sets.\footnote{Here we think of an object $x$ of a site $\oS$ as a functor $x^* \colon \mathrm{Sh}(\oS) \rtarr \mathbf{Set}$.}
\item simplicial fibration if it has the right lifting property (RLP) with respect to all trivial simplicial cofibrations.
\end{enumerate}
\end{defn}

The above definitions provide a model structure on $\sT$ known as the simplicial model structure \cite[Corollary 2.7]{Jardine87simppr}. Denote by $\oH_s(\sT)$ the homotopy category of $\sT$ with respect to the simplicial model structure. The appropriate model structure that $\bA^1$-homotopy theory is set in comes from localizing the simplicial model structure with respect to $\bA^1$-homotopy. The first step towards such a localization is the notion of $\bA^1$-local spaces.

\begin{defn}
We say $X \in \sT$ is $\bA^1$-local if for every $Y \in \sT$, the map on hom-classes in the simplicial homotopy category $[Y,X]_{\oH_s(\sT)} \rtarr [Y \times \bA^1, X]_{\oH_s(\sT)}$ induced by the projection $Y \times \bA^1 \rtarr Y$ is a bijection.
\end{defn}

\begin{defn}
A map $f \colon X \rtarr Y$ in $\sT$ is said to be an
\begin{enumerate}
\item $\bA^1$-cofibration if it is a monomorphism.
\item $\bA^1$-weak equivalence if for any $\bA^1$-local $Z \in \sT$, the map on homotopy classes $f^* \colon [Y,Z]_{\oH_s (\sT)} \rtarr [X,Z]_{\oH_s(\sT)}$ is a bijection.
\item $\bA^1$-fibration if it has the right lifting property with respect to all trivial cofibrations.
\end{enumerate}
\end{defn}

The $\bA^1$-local definitions above prescribe a model structure on $\sT$ that admits functorial factorization \cites[Theorem 2.21]{MV99}[Section 2]{Jardine87simppr}. 
We denote by $\oH (\sT)$ the homotopy category of $\sT$ associated to this model structure.
As promised earlier, we have the following characterization of a space up to simplicial weak equivalence as a colimit \cite[Lemma 17.3]{Hu03}. 
\begin{prop}\label{prop:1.5}
Every object in $\sT$ is simplicially weak equivalent to a homotopy colimit of small colimits of smooth schemes. 
\end{prop}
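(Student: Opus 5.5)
The plan is to combine the co-Yoneda presentation of sheaves, recalled just before the statement, with a levelwise decomposition of a simplicial sheaf as the realization of its simplicial object of levels.

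\textbf{Step 1: reduce to the diagonal of a bisimplicial object.} Let $X \in \sT$ be a simplicial sheaf with levels $X_n$; each $X_n$ is a Nisnevich sheaf of sets. There is a canonical identification of $X$ with the diagonal (equivalently, the realization) of the simplicial object $[n] \mapsto X_n$, where each $X_n$ is viewed as a discrete simplicial sheaf. The diagonal of a bisimplicial sheaf is computed objectwise, because evaluation at a scheme commutes with all the constructions involved. For bisimplicial sets the diagonal is a model for the homotopy colimit (the Bousfield--Kan theorem). Hence, objectwise on $\mathrm{Sm}_S$, $X$ is simplicially weakly equivalent to $\operatorname{hocolim}_{\Delta^{op}} X_n$. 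Since simplicial weak equivalences are detected sectionwise, this is a simplicial weak equivalence in $\sT$.

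\textbf{Step 2: present each level as a colimit of schemes.} Take the presheaf underlying $X_n$. By the co-Yoneda lemma it is the colimit of representables over its category of elements, $\operatorname{colim}_{(U,s)} h_U$ with $U \in \mathrm{Sm}_S$. Sheafification is a left adjoint, and each $h_U$ is already a Nisnevich sheaf \cite[VII.2a]{SGA4V2}. So $X_n$ is the sheaf-colimit of the smooth schemes $U$. This uses only that $\mathrm{Sm}_S$ is essentially small, so the category of elements is small. The based case is handled the same way in $S \downarrow \mathrm{Spc}(S)$, adjoining the basepoint $S$, which is itself a scheme.

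\textbf{Step 3: assemble and identify the main obstacle.} Combining Steps 1 and 2 gives
\[X \simeq \operatorname{hocolim}_{[n]\in\Delta^{op}} \Big(\operatorname{colim}_{(U,s)} U\Big),\]
which is the claim. The one point needing care, and the step I expect to be the main obstacle, is making precise which model of hocolim is meant. I would fix the Bousfield--Kan hocolim in simplicial sheaves, computed sectionwise and then sheafified. The concern is that sheafification might fail to preserve the sectionwise homotopy type. I would address this in two ways. First, weak equivalences in the Jardine model structure are the local (stalkwise) ones. Second, the stalks of a colimit are computed as colimits of stalks. Together these show that the sectionwise comparison of Step 1 induces a local weak equivalence after sheafification. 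If the paper's convention instead uses objectwise equivalences as stated, then $X$ is already a sheaf levelwise, and Step 1 gives the equivalence directly.
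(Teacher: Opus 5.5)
Your argument is correct and is essentially the one the paper relies on. The paper gives no proof of its own and simply cites \cite[Lemma 17.3]{Hu03}, after sketching your Step 2 (co-Yoneda plus sheafification, with representables already Nisnevich sheaves) and saying that ``up to homotopy'' the same holds for simplicial sheaves; your Step 1 makes that passage precise by writing $X$ as the diagonal, i.e.\ the realization, of the simplicial object of its discrete levels $X_n$, which models $\operatorname{hocolim}_{\Delta^{op}} X_n$, and your stalkwise treatment of the sheafified Bousfield--Kan hocolim is the right way to finish in the local model structure.
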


We close by discussing the $\bA^1$-homotopy sheaves, $\pi^{\bA^1}_n(-,*)$, of a space. Let $X \in \sT$. For $n=0$, define $\pi_0^{\bA^1}(X)$ to be the sheaf associated to the presheaf $U \mapsto [U,X]_{\oH(\sT)}$ (for $U \in \mathrm{Sm}_S$). For $n \geq 1$, and the given basepoint $* \colon S \monoto X$, define $\pi_n^{\bA^1}(X,*)$ to be the sheaf associated to the presheaf $U \mapsto [\SI^n U_+, X]_{\oH(\sT)}$. Here $\SI^n$ denotes the simplicial suspension. We have the following motivic analog of Whitehead's theorem \cite[Proposition 2.14]{MV99}.

\begin{thm}\label{thm:1.1}
A map of spaces $f \colon X \rtarr Y$ in $\sT$ is an $\bA^1$-weak equivalence iff it induces isomorphisms of sheaves $\pi_0^{\bA^1}(X) \xrightarrow{\sim} \pi_0^{\bA^1}(Y)$, and $\pi_n^{\bA^1}(X,*) \xrightarrow{\sim} \pi_n^{\bA^1}(Y,\star)$ for every choice of basepoints $*$ for $X$ and $\star$ for $Y$ (as objects in $\mathrm{Spc}(S)$). 
\end{thm}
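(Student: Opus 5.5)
The plan is to reduce to the simplicial Whitehead theorem by passing to $\bA^1$-fibrant replacements. I will use the functorial factorization of the $\bA^1$-model structure to choose $\bA^1$-fibrant replacements $X \rtarr L X$ and $Y \rtarr LY$. These are trivial $\bA^1$-cofibrations, and $f$ induces a map $Lf \colon LX \rtarr LY$. By two-out-of-three, $f$ is an $\bA^1$-weak equivalence iff $Lf$ is. The replacement maps induce isomorphisms on all the sheaves $\pi_n^{\bA^1}$, since these are defined through $\oH(\sT)$, where the replacement maps become isomorphisms. So it suffices to prove the statement for the map $Lf$ between $\bA^1$-fibrant spaces.

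For $\bA^1$-fibrant objects I would use the following comparison. An $\bA^1$-fibrant space is simplicially fibrant and $\bA^1$-local. For $\bA^1$-local $Z$, the localization gives $[W,Z]_{\oH(\sT)} \cong [W,Z]_{\oH_s(\sT)}$ for every $W$; this is the standard property of a left Bousfield localization, and it follows from the definitions of $\bA^1$-local objects and $\bA^1$-weak equivalences given above. Consequently, for $\bA^1$-fibrant $Z$, the sheaf $\pi_n^{\bA^1}(Z,*)$ coincides with the sheafification of
\[U \mapsto [\SI^n U_+, Z]_{\oH_s(\sT)},\]
and this is the simplicial homotopy sheaf $\pi_n^s(Z,*)$. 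The same holds for $\pi_0$.

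Next, between $\bA^1$-local objects, a map is an $\bA^1$-weak equivalence iff it is a simplicial weak equivalence. The reverse implication is immediate, since simplicial equivalences are $\bA^1$-equivalences. The forward implication is the local Whitehead (Yoneda) argument in $\oH(\sT)$ restricted to local objects: an $\bA^1$-equivalence between local objects is an isomorphism in $\oH_s(\sT)$, because it induces bijections on $[W,-]_{\oH_s}$ for all $W$. It then remains to apply the simplicial Whitehead theorem of Jardine for simplicial sheaves. A map of simplicial sheaves is a local weak equivalence iff it induces isomorphisms on $\pi_0$ sheaves and on the homotopy sheaves $\pi_n$ for all local sections as basepoints. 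This requires some care in identifying stalkwise equivalences with the definition given via sections, which agree for fibrant objects. The statement's quantification over all basepoints of $X$ and $Y$ as objects in $\mathrm{Spc}(S)$ is exactly what that theorem requires.

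The main obstacle is the basepoint bookkeeping in this last step. The sheaves $\pi_n$ must be taken over all basepoints defined on arbitrary $U \in \mathrm{Sm}_S$, not just the global basepoint $S \rtarr X$. I would handle this by working in the slice over $U$, or equivalently by restricting to the small site of $U$ and using Nisnevich points (henselizations). There I would check that stalkwise isomorphisms of homotopy groups at all basepoints give a stalkwise weak equivalence of Kan complexes for fibrant objects.
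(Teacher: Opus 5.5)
Your route is the standard argument behind the result of \cite{MV99} that the paper cites without proof: pass to $\bA^1$-fibrant models, identify $\pi_n^{\bA^1}$ there with the simplicial homotopy sheaves, and finish with Jardine's stalkwise Whitehead theorem. You are also right that basepoints must range over local sections over every $U \in \mathrm{Sm}_S$. With only $S$-points the statement fails. Take $(\mathbb{P}^1_L)_+ \rtarr (\mathrm{Spec}\,L)_+$ over $S=\mathrm{Spec}\,k$, with $L/k$ a nontrivial finite separable extension. The added point is the only $k$-point, and the map is an isomorphism on $\pi_0^{\bA^1}$ and on all $\pi_n^{\bA^1}$ at that point (both sides vanish). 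Yet it is not an $\bA^1$-equivalence, because $[V_+,B\mathbb{G}_m]_{\oH(\sT)}=\mathrm{Pic}(V)$ for smooth $V$ and $\mathrm{Pic}(\mathbb{P}^1_L)\neq \mathrm{Pic}(L)$.

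There is, however, a gap at the step ``so it suffices to prove the statement for $Lf$.'' Your hypothesis gives isomorphisms on $\pi_n^{\bA^1}$ only at local sections $x$ of $X$. Jardine's criterion for $Lf\colon LX\rtarr LY$, by contrast, needs $\pi_n((LX)_p,v)\rtarr\pi_n((LY)_p,Lf(v))$ to be an isomorphism at every vertex $v$ of every Nisnevich stalk $(LX)_p$. Since these stalks are Kan complexes, change of basepoint reduces you to path components that meet the image of $X_p$. So what is missing is exactly that $\pi_0(X_p)\rtarr\pi_0((LX)_p)$ is surjective for every point $p$, i.e.\ that $X\rtarr\pi_0^{\bA^1}(X)$ is an epimorphism of Nisnevich sheaves. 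This is Morel--Voevodsky's unstable $\bA^1$-$0$-connectivity theorem \cite[\S2, Corollary 3.22]{MV99}. It is not a formal property of the localization: already $X\rtarr X\vee\bA^1$ is an acyclic cofibration adjoining local sections not simplicially connected to $X$, and for general left Bousfield localizations the analogous statement is false. Its proof uses the explicit model $\mathrm{Ex}_{\bA^1}$, a transfinite iteration of $\mathrm{Sing}_*$ and simplicial fibrant replacement, each of which is surjective on $\pi_0$-sheaves. Without this input your argument only proves the weaker statement in which basepoints range over local sections of $LX$. With it inserted at the reduction step, the rest of your proof goes through.
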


A most helpful computational property of the $\bA^1$-motivic homotopy sheaves is that of the long exact sequence associated to a homotopy fiber sequence analogous to the one in topology.

\begin{thm}\label{thm:1.2}
If $F \rtarr E \rtarr B$ is a homotopy fiber sequence of spaces, then there is a natural long exact sequence of homotopy sheaves:
\[\cdots \rtarr \pi^{\bA^1}_{n+1}(B) \rtarr \pi^{\bA^1}_n(F) \rtarr \pi_n^{\bA^1}(E) \rtarr \pi^{\bA^1}_n(B) \rtarr \cdots\]
\end{thm}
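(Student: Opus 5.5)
The plan is to reduce the statement to the corresponding fact for simplicial homotopy sheaves of $\bA^1$-fibrant objects, and then sheafify. First I replace the homotopy fiber sequence by a strict one. Using functorial factorization in the $\bA^1$-model structure, I take a fibrant replacement $E \rtarr E^f$, $B \rtarr B^f$ and factor $E^f \rtarr B^f$ as a trivial cofibration followed by an $\bA^1$-fibration $p \colon E' \rtarr B^f$. Set $F' = p^{-1}(*)$, the strict fiber over the basepoint. By the definition of a homotopy fiber sequence, $F \rtarr E \rtarr B$ is $\bA^1$-weakly equivalent, compatibly with the maps, to $F' \rtarr E' \rtarr B^f$. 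By Theorem \ref{thm:1.1} (the easy direction), $\bA^1$-weak equivalences induce isomorphisms on all $\pi_n^{\bA^1}$. So it suffices to treat $F' \rtarr E' \rtarr B^f$, in which all three objects are $\bA^1$-fibrant; $F'$ is fibrant as a pullback of a fibration over the fibrant $B^f$.

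Second, for $\bA^1$-fibrant $X$ and $U \in \mathrm{Sm}_S$, I identify $[\SI^n U_+, X]_{\oH(\sT)}$ with $\pi_n$ of the simplicial set $\mathrm{Map}(U_+, X) = X(U)$ at the basepoint. This works because $X$ is fibrant in a simplicial model structure localized from the injective one, $U_+$ is cofibrant, and representables evaluate by Yoneda. An $\bA^1$-fibration is in particular a simplicial (injective) fibration, and injective fibrations are objectwise Kan fibrations. Hence for each $U$ the sequence $F'(U) \rtarr E'(U) \rtarr B^f(U)$ is a Kan fibration sequence of simplicial sets. This gives the classical natural long exact sequence of homotopy groups and pointed sets, natural in $U$, i.e.\ a long exact sequence of presheaves $U \mapsto [\SI^n U_+, -]$.

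Third, sheafification in the Nisnevich topology is exact (it commutes with finite limits and is a left adjoint), so it carries exact sequences of presheaves of groups and pointed sets to exact sequences of sheaves. Applying it yields the claimed long exact sequence of $\pi_n^{\bA^1}$. Naturality follows because every construction above (functorial factorization, strict fiber, boundary maps) is functorial in maps of fiber sequences.

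The main obstacle I expect is the low-degree end and the basepoint bookkeeping. The terms $\pi_1$ and $\pi_0$ are only groups or pointed sets, and the $\pi_0^{\bA^1}$ of the paper is defined without basepoint. I would handle this by working with pointed sets and checking exactness at $\pi_0$ in the weak sense (as in topology), using that sheafification preserves this pointed exactness since it preserves finite limits and epimorphisms. A secondary check is that the model structure is simplicial, so that the mapping-space identification in the second step is valid; this holds for the Morel–Voevodsky localization.
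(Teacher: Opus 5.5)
Your proposal is correct and follows the same route as the paper. The paper simply asserts that the long exact sequence exists at the presheaf level and then applies Nisnevich sheafification, which is exact. Your fibrant-replacement step supplies the details the paper leaves implicit: you reduce to an $\bA^1$-fibration between $\bA^1$-fibrant objects, so that each $F'(U) \rtarr E'(U) \rtarr B^f(U)$ is a Kan fibration sequence whose homotopy groups compute $[\SI^n U_+,-]_{\oH(\sT)}$.
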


\noindent This is a natural long exact sequence first obtained at the presheaf level, passed through Nisnevich sheafification (which is exact). In the particular case of an inclusion of a subspace $\iota \colon A \monoto Y$, and the homotopy fiber of this inclusion, we get an analog of the long exact sequence of relative homotopy groups.\\

Fixing terminology for the rest of the paper, we say that $X \in \sT$ is \emph{connected} if $\pi_0^{\bA^1}(X)$ is terminal in the category of sheaves of sets (i.e. a constant sheaf at a singleton set). Similarly we say $X \in \sT$ is $n$-\emph{connected} if $\pi_q^{\bA^1}(X)$ is terminal for $q \leq n$ (in the category of sheaves of sets for $q=0$, of groups for $q=1$, and of abelian groups for $q \geq 2$). We say that a map $f \colon X \rtarr Y$ in $\sT$ is $n$-\emph{connected} if its homotopy fiber $Ff$ is $(n-1)$-connected. By Theorem \ref{thm:1.2} this means that $f$ induces isomorphisms $\pi_q^{\bA^1}(X) \xrightarrow{\sim} \pi_q^{\bA^1}(Y)$ for $q < n$ and a surjection $\pi_n^{\bA^1}(X) \epito \pi_n^{\bA^1}(Y)$. We say that a pair $(X,A)$ (i.e. an inclusion $\iota \colon A \rtarr X$ in $\sT$) is $n$-\emph{connected} if $\iota$ is $n$-connected.\\

\subsection{Po Hu's category of coordinate-free motivic spectra}\label{sec:coordfree}
In this section, we recall the construction of the coordinate-free spectra of \cite{Hu03}. Recall that our base scheme was $S = \Spec(R)$ for a Noetherian integral domain $R$. A natural choice for our coordinate-free indexing universe would be an infinite-dimensional affine space over $S$. We define our universe to be the countably infinite-dimensional $R$-module $\oU = \bigoplus_{\infty} R$. \footnote{For our purposes, it is okay to fix a universe here. \cite[Chapter 8]{Hu03} presents the theory for changes of universe.} We say $Z$ is a \emph{finite dimensional subspace} of $\oU$ if it is a finitely generated projective submodule of $\oU$ such that the inclusion $Z \monoto \oU$ splits. Fix a basis $\{e_1, e_2, \dots\}$ for $\oU$ and define $T_n$ to be the free submodule generated by $\{e_1, \dots, e_n\}$ so that $\oU \iso \colim_n T_n$ as an $R$-module. Now to think of $\oU$ as a space over $S$, it suffices to note that every finitely generated projective $R$-module can be thought of as a vector bundle over $S$. Each $T_n$ then corresponds to the trivial bundle $\bA^n_S$, the $n$-dimensional affine space over $S$. To define $\oU$ as a space (over $S$), we only need to write $\oU \iso \colim_n T_n \iso \colim_n \bA^n_S \iso \bA^{\infty}_S$. Note that $\oU$ is not really an object of $\mathrm{Sm}_S$, but it is an ind-scheme over $S$. Similarly, a finitely generated projective submodule $Z$ of $\oU$ can be thought of as a finite-dimensional subbundle of $\oU$ over $S$.\\

Our spectra will be indexed over the cofinite subspaces of $\oU$. These are the subspaces of $\oU$ of finite codimension, i.e. the split projective submodules of finite codimensions thought of as vector bundles over $S$. To put this definition on rigorous footing, P. Hu first defines the Grassmannian $Gr_{S}^{cof}(\oU)$ of cofinite subspaces of $\oU$. We begin with the Grassmannian $Gr_{k}^{cof}(\oU_{k})$ over $k$ of cofinite subspaces of $\bA^{\infty}_{k}$, and write $Gr_{S}^{cof}(\oU) := Gr_{k}^{cof}(\oU_{k}) \times_{\Spec(k)} S$. For $N \in \bN$ and $m \leq N$, define the Grassmannian $Gr_{k}^{m}(\bA^N)$ to be the space (over $k$) of subspaces of $\bA^N_k$ whose direct sum with $\bA^m_k$ is $\bA^N_k$ (here $\bA^m_k \monoto \bA^N_k$ by the inclusion into the first $m$ coordinates of $\bA^N_k$). Identifying $\oU_{k}$ with $\colim_N \bA^N_k$, we then define $Gr_{k}^{cof}(\oU_{k})$ to be the colimit $\colim_m \lim_{N \geq m} Gr_{k}^m(\bA^N)$. The limit is taken over the restriction maps $Gr_{k}^{m}(\bA^{N+1}) \rtarr Gr_{k}^m (\bA^N)$ taking $V$ to $V \cap \bA^N_k$ (here we think of $\bA^N_k \subset \bA^{N+1}_k$ as inclusion into the first $N$ coordinates). We say that a map $V \colon S \rtarr Gr^{cof}_{k}(\oU_{k})$ over $k$ is a cofinite subspace of $\oU$, in other words the cofinite subspaces of $\oU$ precisely correspond to the split projective submodules of $\oU$ of finite codimension. We similarly define an $n$-dimensional subspace of $\bA^N_S$ to be a map $Z \colon S \rtarr Gr_{k}(n,\bA^N_{k})$ (here $Gr_{k}(n,\bA^N_{k})$ is the Grassmannian of $n$-dimensional subspaces over $k$ of $\bA^N_{k}$). Again, this is just to say that the finite-dimensional subspaces are the finite-dimensional split projective submodules. For $U$ an affine space over $S$, and subspaces $V, W$ of $U$ with trivial intersection, we denote by $V \oplus W$ the internal direct sum of $V$ and $W$.\\

We are now ready to define our indexing category. Define the category of cofinite subspaces of $\oU$, written $\oC(\oU)$, to be the category whose objects are cofinite subspaces of $\oU$ and morphisms $Z \colon U \rtarr V$ are given by finite-dimensional subspaces $Z \subset U$ such that $V \oplus Z  = U$. The composite of morphisms $Z \colon U \rtarr V$ and $T \colon V \rtarr W$ is the finite-dimensional subspace $Z \oplus T$ of $U$ (note that $W \oplus (Z \oplus T) = U$). \cite[Lemma 2.3]{Hu03} shows that $\oC(\oU)$ is a small directed category.\\

Let $Z$ be a finite-dimensional space (alias finite-dimensional split projective module) over $S$. Thought of as a space, $Z$ looks like a vector bundle over $S$ equipped with a rational point $0 \colon S \rtarr Z$ (alias the zero-section of $Z$ as a vector bundle). We define the space $S^Z$ as the pushout \[\xymatrix{
Z \setminus \{0\} \ar[r] \ar[d] & Z \ar[d]\\
S \ar[r] & S^Z.
}\]
\noindent We think of $S^Z$ as the one-point compactification of $Z$ over $S$. For $X \in \sT$, we write $\OM^Z X$ to mean $\underline{\Hom}_{\sT} (S^Z, X)$. We will also denote the adjoint construction $S^Z \sma X$ by $\SI^{Z} X$.

\begin{defn}
A \emph{prespectrum} $D$ is a family of spaces $\{D_U\}_{U \in \oC(\oU)}$, together with morphisms $\rho^{U}_{V,Z} \colon D_U \rtarr \OM^ZD_V$ (for every morphism $Z \colon U \rtarr V$ in $\oC(\oU)$) satisfying the following conditions:
\begin{enumerate}[label=\alph*)]
\item $\rho^{U}_{U,0} = \mathrm{id}_{D_U}$
\item For every $Z \colon U \rtarr V$, and $T \colon V \rtarr W$ in $\oC(\oU)$, \[\rho^U_{W, Z \oplus T} = (\OM^Z \rho^{V}_{W,T}) \circ \rho^{U}_{V, Z}.\]
\end{enumerate}
\noindent The morphisms $\rho^{U}_{V,Z}$ are called the \emph{structure maps} of $D$. Given prespectra $D$ and $D'$, we call a collection of maps $\{f_U \colon D_U \rtarr D'_U\}_{U \in \oC(\oU)}$ whose elements are compatible with the structure maps, a map of prespectra from $D$ to $D'$, written $f \colon D \rtarr D'$. We will denote by $p\sS$ the category of prespectra.
\end{defn}

\begin{defn}\label{defn:spectra}
We say that a prespectrum $E$ is a \emph{spectrum} if its structure maps are all isomorphisms. A map of prespectra between two spectra is called a map of spectra. We denote by $\sS$ the category of spectra.
\end{defn}

All limits in $\sS$ (and all limits and colimits in $p\sS$) can be computed spacewise, just as in the topological case. The question of colimits, can be answered as follows. There is a forgetful functor $R \colon \sS \rtarr p\sS$, given by forgetting that the structure maps are isomorphisms. $R$ admits a left adjoint $L$ that we call spectrification. Thanks to the Nisnevich topology (namely the compactness of $S^Z$ as an object in $\sT$ \ref{prop:1.2}), the story here is much simpler than in topology where one must first pass through inclusion prespectra and Freyd's adjoint functor theorem.

\begin{prop}\label{prop:2.1}
There exists a functor $L \colon p \sS \rtarr \sS$ called spectrification, defined by $D \mapsto LD$ where $(LD)_U = \colim_{(V,Z) \in U \downarrow \oC(\oU)} \OM^Z D_V$ with prescribed structure maps $\rho^{U}_{W,T} : (LD)_U \xrightarrow{\sim} \OM^T (LD)_W$. Here the colimit is taken over the maps $\OM^Z D_V \rtarr \OM^{Z'}D_{V'}$ given by $\OM^{Z}\rho^{V}_{V', T}$ for maps $T \colon (V,Z) \rtarr (V',Z')$. 
\end{prop}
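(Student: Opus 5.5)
We must show three things: the colimit defining $(LD)_U$ exists and is functorial in $D$; the maps $\rho^U_{W,T}\colon (LD)_U \rtarr \OM^T(LD)_W$ are isomorphisms satisfying the prespectrum axioms, so that $LD$ is a spectrum; and $L$ is left adjoint to the forgetful functor $R\colon \sS \rtarr p\sS$. The indexing category $U\downarrow\oC(\oU)$ is directed because $\oC(\oU)$ is small and directed \cite[Lemma 2.3]{Hu03}, and $\sT$ is cocomplete. Hence $(LD)_U$ is a filtered colimit in $\sT$, and a map $f\colon D\rtarr D'$ induces maps $\OM^Z f_V$ that are compatible with the transition maps. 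The transition maps are also compatible with one another: axiom (b) for the structure maps of $D$ gives $\OM^{Z}\rho^{V'}_{V'',T'}\circ \OM^Z\rho^{V}_{V',T} = \OM^Z\rho^V_{V'',T\oplus T'}$, which is exactly the functoriality we need.

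For the structure maps, fix $T\colon U\rtarr W$. Given $(V,Z)\in W\downarrow\oC(\oU)$, the pair $(V, T\oplus Z)$ lies in $U\downarrow\oC(\oU)$, and $\OM^{T\oplus Z}D_V \iso \OM^T\OM^Z D_V$. So the structure map $\rho^U_{W,T}$ is the composite
\[(LD)_U = \colim_{U\downarrow\oC(\oU)}\OM^{Z'}D_{V'} \ltarr \colim_{W\downarrow \oC(\oU)}\OM^T\OM^Z D_V \rtarr \OM^T\colim_{W\downarrow\oC(\oU)}\OM^ZD_V = \OM^T(LD)_W.\]
The first arrow is an isomorphism by cofinality: the functor $W\downarrow\oC(\oU)\rtarr U\downarrow\oC(\oU)$ given by precomposition with $T$ is cofinal. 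To check this, note that any $(V',Z')$ under $U$ maps further to some object factoring through $W$, by directedness. I expect this requires choosing a common cofinite refinement $V''\subset V'\cap W$ with split complements, which is where Hu's Grassmannian description enters.

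The second arrow is the key step and the main obstacle: $\OM^T$ must commute with filtered colimits in $\sT$. The plan here is to invoke Proposition \ref{prop:1.2}. Since $S^T$ is built from smooth schemes $T$ and $T\setminus\{0\}$ via a pushout, it is compact in $\sT$. Evaluating $\underline{\Hom}_{\sT}(S^T,-)$ on a section $Y\in \mathrm{Sm}_S$ gives $\Hom(S^T\sma Y_+,-)$. Because $S^T\sma Y_+$ is again a finite colimit of smooth schemes, it is compact, and filtered colimits of sheaves are computed sectionwise on such compact objects. This yields $\OM^T\colim \iso \colim\OM^T$ sectionwise, and hence as sheaves.

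The axioms (a) and (b) for $\rho$ then follow from the cofinality identification together with $\OM^{Z\oplus T}\iso\OM^Z\OM^T$. For the adjunction, the unit $D\rtarr RLD$ is the inclusion of the term $(U,0)$ into the colimit. Given a spectrum $E$ and a map $f\colon D\rtarr RE$, the map $\OM^Z D_V\rtarr \OM^ZE_V\iso E_U$ is compatible over $U\downarrow\oC(\oU)$, so it induces a unique extension $LD\rtarr E$. Conversely, when $D$ is already a spectrum every transition map is an isomorphism, so the counit $LRE\rtarr E$ is an isomorphism. This gives the universal property.
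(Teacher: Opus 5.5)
Your proposal is correct and follows essentially the same route as the paper: reindex the colimit defining $(LD)_U$ over $W\downarrow\oC(\oU)$ via $(V,Z)\mapsto (V,T\oplus Z)$, use $\OM^{T\oplus Z}\cong\OM^T\OM^Z$, and commute $\OM^T$ past the filtered colimit using the compactness of $S^T$ from Proposition \ref{prop:1.2}. The paper only checks that $LD$ is a spectrum and leaves the cofinality of the reindexing implicit, so your cofinality justification, functoriality check and sketch of the adjunction $L\dashv R$ are extra detail rather than a different argument. One small slip: in your composite of transition maps, the first factor should be $\OM^{Z'}\rho^{V'}_{V'',T'}$ with $Z'=Z\oplus T$, not $\OM^{Z}\rho^{V'}_{V'',T'}$.
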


\begin{proof}
We only demonstrate why $\{(LD)_U\}_{U \in \oC(\oU)}$ is actually a spectrum. It suffices to prescribe structure maps $\rho^{U}_{W,T} : (LD)_U \xrightarrow{\sim} \OM^T (LD)_W$ for $T \colon U \rtarr W$ in $\oC(\oU)$. We begin with $(LD)_U$. 
\begin{equation*}
\begin{split}
(LD)_U 
&\iso \colim_{(V,Z) \in W \downarrow \oC(\oU)} \OM^{T \oplus Z} D_V\\
&\iso \OM^T \colim_{(V,Z) \in W \downarrow \oC(\oU)} \OM^Z D_V\\
&= \OM^T(LD)_W
\end{split}
\end{equation*}
Here we have used Proposition \ref{prop:1.2} to see that $S^T$ is compact in $\sT$, allowing us to commute $\OM^T$ and the colimit. 
\end{proof}

As a left adjoint, spectrification commutes with colimits. Therefore we may compute colimits in $\sS$ by first computing them at the prespectrum level and then spectrifying. The smash product $X \wedge E$ of a space $X$ with a spectrum $E$ is given by taking smash products spacewise and then spectrifying. We also have the function spectrum $F(X,E)$ defined by $F(X,E)_U = \underline{\Hom}_{\sT} (X,E_U)$ for each $U \in \oC(\oU)$. These constructions show that $\sS$ is tensored and cotensored over $\sT$. One might wonder how the category $\sS$ compares with Voevodsky's original construction of the category of sequential $\bT$-spectra in \cite{voev96pre}. These concerns are addressed in \cite[Proposition 3.4 and Theorem 4.5]{Hu03}. These categories are in fact on the nose equivalent.

\subsection{The $\bA^1$-local stable model structure}\label{subsec:local}
In this section, we recall P. Hu's definition of the stable $\bA^1$-local model structure on $\sS$. There is first, a stable \emph{simplicial} model structure on spectra, which itself comes from levelwise and stable simplicial model structures on the \emph{prespectrum} level. 

\begin{defn}
We say a map $f \colon D \rtarr E$ in $p\sS$ is a levelwise simplicial weak equivalence or fibration if for every $V \in \oC(\oU)$, the $V$-th space map $f_V \colon D_V \rtarr E_V$ is a simplicial weak equivalence or simplicial fibration of spaces (over $k$) respectively. We say $f$ is a levelwise simplicial cofibration if it satisfies the left lifting property (LLP) with respect to every levelwise simplicial acyclic fibrations. 
\end{defn}

\cite[Proposition 5.10 and Chapter 15]{Hu03} show using the small objects argument that the distinguished classes of maps in the above definitions form a model structure on $p\sS$. We call this model structure the \emph{levelwise simplicial model structure} on $p\sS$. 

\begin{defn}
Let $f \colon D \rtarr E$ be a map in $p\sS$. We say $f$ is a:
\begin{enumerate}
\item stable simplicial cofibration if it is a levelwise simplicial cofibration.
\item stable simplicial weak equivalence if it induces a bijection of colimits:
\[\colim_{(W,Z) \in V \downarrow \oC(\oU)} [\SI^Z X_+, D_W]_{\oH_s(\sT_{k})} \iso \colim_{(W,Z) \in V \downarrow \oC(\oU)} [\SI^Z X_+, E_W]_{\oH_s(\sT_{k})}\]
\noindent for every $V \in \oC(\oU)$ and every $X \in \mathrm{Sm}_S$.
\item stable simplicial fibration if it satisfies the RLP with respect to all stable simplicial acyclic cofibrations.
\end{enumerate}
\end{defn}

The Bousfield-Friedlander theorem of \cite[Theorem A.7]{BF1978} shows that the above definitions form a model structure on $p\sS$. We call this model structure the \emph{stable simplicial model structure} on $p\sS$. We can now define the stable simplicial model structure on $\sS$ by the following definitions.

\begin{defn}
Let $f \colon D \rtarr E$ be a map in $\sS$. We say $f$ is a stable simplicial cofibration, weak equivalence, or fibration if it is one in the stable simplicial model structure on $p\sS$.
\end{defn}

\noindent The small objects argument from \cite[Chapter 15]{Hu03} applied to the above definitions show that they prescribe a model structure on $\sS$. We call this model structure the stable simplicial model structure on $\sS$, and we denote the associated homotopy category of $\sS$ by $\oH_s(\sS)$.\\

To $\bA^1$-localize the stable simplicial model structure on $\sS$, we first recall P. Hu's notion of an $\bA^1$-homotopy in $\sS$. Consider two maps $f, g \colon E \rtarr G$ in $\sS$. We say $f$ and $g$ are $\bA^1$-homotopic if there is a map $h \colon E \wedge \bA^1_+ \rtarr G$ whose composition with the maps $\iota_0, \iota_1 \colon E = E \wedge S^0 \rtarr E \wedge \bA^1_+$ (one sending the non-base-point of $S^0$ to $0$, and the other sending that point to $1$ in $\bA^1$) can be identified with $f$ and $g$ respectively. We define $\bA^1$-local spectra analogous to $\bA^1$-local spaces.

\begin{defn}
A spectrum $G \in \sS$ is termed $\bA^1$-local if for every $E \in \sS$, the map $E \wedge \bA^1_+ \rtarr E$ induced by the projection map $\bA^1_+ \rtarr S^0$ induces a bijection of homotopy classes:
\[[E,G]_{\oH_s(\sS)} \rtarr [E \wedge \bA^1_+, G]_{\oH_s(\sS)}\]
\end{defn}

\begin{defn}
Let $f \colon E \rtarr E'$ be a map in $\sS$. We say that $f$ is an:
\begin{enumerate}
\item $\bA^1$-cofibration if it is a stable simplicial cofibration.
\item $\bA^1$-weak equivalence if for every $\bA^1$-local $G$, the map of homotopy classes $[E',G]_{\oH_s(\sS)} \rtarr [E,G]_{\oH_s(\sS)}$ is a bijection.
\item $\bA^1$-fibration if it satisfies the RLP with respect to every acyclic $\bA^1$-cofibration.
\end{enumerate}
\end{defn}

The above definitions form a model structure on $\sS$ that we call the \emph{stable $\bA^1$-local model structure} on $\sS$ \cite[Theorem 6.4]{Hu03}. The proof is mostly analogous to that of \cite[Theorem 2.2.21]{MV99}. 
We denote by $\oH(\sS)$ the homotopy category of $\sS$ associated to the stable $\bA^1$-local model structure.

\subsection{The $(\SI^{\infty},\OM^{\infty})$ adjunction}\label{subsec:adjunction}
In this section, we define the adjunction $(\SI^{\infty},\OM^{\infty})$ and study some of its properties. For any $V \in \oC(\oU)$, there is the $V$-th space functor $\OM^{\oU}_V \colon \sS \rtarr \sT$ given by $\OM^{\oU}_V E = E_V$. This functor has a left adjoint $\SI^{\oU}_V \colon \sT \rtarr \sS$, called the $V$-th shift desuspension of the suspension spectrum, whose construction we now recall. Let $X \in \sT$. For each $U \in \oC(\oU)$, define the space $D_V(X)_U = \bigvee_{Z \oplus U = V} \SI^Z X$, and $D_V(X)_U = S$ (a point) otherwise. If $W \subset U \subset V$ (so that $V \rtarr U \rtarr W$ in $\oC(\oU)$) such that $W \oplus T = U$, then for any $Z$ with $Z \oplus U = V$, we have $(T \oplus Z) \oplus W = V$. This furnishes compatible maps:
\[\SI^T D_V(X)_V = \bigvee_{Z \oplus U = V} \SI^{T \oplus Z} X \rtarr \bigvee_{Z' \oplus W = V} \SI^{Z'}X = D_V(X)_W.\]
By the $(\SI^Z, \OM^Z)$ adjunction, this furnishes a prespectrum $D_V(X) = \{D_V(X)_U\}_{U \in \oC(\oU)}.$ We define $\SI^{\oU}_V X := LD_V(X)$.\\

It remains to show that $\SI^{\oU}_V$ is left adjoint to $\OM^{\oU}_V$. It suffices to work on the level of prespectra. Let $f \colon D_V(X) \rtarr E$ be a map of prespectra (with $D_V(X)$ as defined above), i.e. a collection of compatible maps $f_U \colon D_V(X)_U \rtarr E_U$. The $V$-th map $f_V$ is then just a map $X \rtarr E_V$. Conversely, let $g \colon X \rtarr E_V$ be a map of spaces. Let $U$ be a cofinite subspace of $V$. For each map $Z \colon V \rtarr U$ (that is, a finite subspace $Z \subset V$ such that $Z \oplus U = V$), we obtain the map $\SI^Z X \xrightarrow{\SI^Z g} \SI^Z E_V \rtarr E_U$, where the last map is furnished by the structure maps of $E$. This prescription provides compatible maps $D_V(X)_U \rtarr E_U$, as needed.\\

We record the following key characterization of $\bA^1$-local spectra, and an analog of Whitehead's theorem for $\bA^1$-weak equivalences of spectra \cite[Lemma 6.12 and Proposition 6.13]{Hu03}.

\begin{thm}
A spectrum $G$ is $\bA^1$-local iff for every $X \in \mathrm{Spc}(S)$ that is a finite colimit of smooth schemes over $S$, and for every $V \in \oC(\oU)$, the map \[[\SI^{\oU}_V X_+, G]_{\oH_s(\sS)} \rtarr [\SI^{\oU}_{V}X_+ \wedge \bA^1_+, G]_{\oH_s(\sS)}\] \noindent induced by the projection $\bA^1_+ \rtarr S^0$ is a bijection.
\end{thm}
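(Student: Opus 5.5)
The forward implication is immediate from the definition: if $G$ is $\bA^1$-local, then the map $[E,G]_{\oH_s(\sS)} \rtarr [E \wedge \bA^1_+, G]_{\oH_s(\sS)}$ is a bijection for \emph{every} $E \in \sS$. In particular it is a bijection for $E = \SI^{\oU}_V X_+$.

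For the converse, I would show that the class $\mathcal{K}$ of spectra $E$ for which the projection induces a bijection $[E,G] \rtarr [E \wedge \bA^1_+, G]$ (in $\oH_s(\sS)$) is large enough to contain every spectrum. By hypothesis $\mathcal{K}$ contains the generators $\SI^{\oU}_V X_+$ with $X$ a finite colimit of smooth schemes. I would then establish three closure properties.
\begin{enumerate}
\item $\mathcal{K}$ is closed under simplicial suspensions and desuspensions. Since $\oH_s(\sS)$ is triangulated, $\SI$ is invertible, and $\SI$ commutes with $-\wedge \bA^1_+$. The simplicial circle appears among the $S^Z$ only after $\bA^1$-localization, so here one should use the triangulated shift rather than the representation spheres.
\item $\mathcal{K}$ is closed under cofiber sequences. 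Apply $[-,G]$ to a cofiber triangle and to its smash with $\bA^1_+$, which is still a triangle because $-\wedge \bA^1_+$ is exact. Naturality of the projection gives a map of long exact sequences of abelian groups, and the five lemma applies.
\item $\mathcal{K}$ is closed under arbitrary wedges, since both sides turn wedges into products.
\end{enumerate}

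Granting these closure properties, the remaining step is to show that every spectrum lies in the localizing subcategory generated by the $\SI^{\oU}_V X_+$. This is the step I expect to be the main obstacle. I would argue in two stages.
\begin{itemize}
\item First, by Proposition \ref{prop:1.5}, every space is simplicially weakly equivalent to a homotopy colimit of small colimits of smooth schemes. Using the compactness in Proposition \ref{prop:1.2}, a small colimit of schemes is a filtered colimit of finite colimits, hence a homotopy colimit of such.
\item Second, every spectrum $E$ is the homotopy colimit of its shifted desuspensions $\SI^{\oU}_V E_V$, via the telescope over a cofinal sequence $V_n$ in $\oC(\oU)$. This uses that $\oC(\oU)$ is directed, by \cite[Lemma 2.3]{Hu03}.
\end{itemize}
Homotopy colimits are built from wedges and cofibers (telescopes and geometric realizations of simplicial objects), so the closure properties propagate the bijection to all of $E$.

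The subtlety in this last step is that a sequential homotopy colimit yields a $\lim^1$ term. Concretely, the Milnor sequence $0 \rtarr \lim^1 [\SI E_n, G] \rtarr [\mathrm{hocolim}\, E_n, G] \rtarr \lim [E_n,G] \rtarr 0$ must be compared for $E_n$ and for $E_n \wedge \bA^1_+$. Because the projection map is already a levelwise bijection on both the $\lim$ and $\lim^1$ terms, the five lemma again concludes the argument. Alternatively, this is exactly the standard argument of \cite[Lemma 6.12]{Hu03}, so one can conclude by invoking it.
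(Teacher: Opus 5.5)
The paper gives no argument for this statement; it only records it from \cite[Lemma 6.12]{Hu03}. Your converse therefore has to stand on its own. It does not, because of a genuine gap at closure property (1), on which the rest of the converse depends.

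\textbf{The gap.} For a fixed target $G$, the condition $E\in\mathcal K$ concerns only $[E,G]$. Even in a triangulated category, $[\SI E,G]\cong[E,\SI^{-1}G]$ is a statement about a \emph{different} target. The facts that $\SI$ is invertible and commutes with $-\wedge\bA^1_+$ do not carry the bijection from $E$ to $\SI E$. Yet you need exactly this closure in three places:
\begin{enumerate}
\item the five-lemma step (2) uses the terms $[\SI A,G]$ and $[\SI B,G]$;
\item the $\lim^1$ term in your Milnor sequence is $\lim^1[\SI E_n,G]$;
\item a localizing subcategory must be closed under $\SI^{\pm1}$.
\end{enumerate}
So the hypothesis must already be known for simplicially suspended generators. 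That is real content, not a formality.

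\textbf{A second problem.} Your abelian groups, exact triangles and five lemma presuppose that $\oH_s(\sS)$ is triangulated, and that is not the case here. It is the $T=S^{\bA^1}$-stabilization of the Nisnevich-local, non-$\bA^1$-local theory, where (as you yourself note) $S^Z$ is not a simplicial suspension. Concretely, the $T$-sphere spectrum there is levelwise a discrete sheaf, which is fibrant. So every map to it from $S^1_s\wedge\SI^{\oU}_V X_+$ is trivial. If $S^1_s\wedge-$ were invertible, this would force the sphere spectrum to be contractible.

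\textbf{A repair.} Work with derived mapping spaces, which avoids both issues.
\begin{enumerate}
\item Take $G$ stably fibrant. The class of cofibrant $E$ for which $p^*\colon\mathrm{Map}(E,G)\to\mathrm{Map}(E\wedge\bA^1_+,G)$ is a weak equivalence is closed under all homotopy colimits for formal reasons: $-\wedge\bA^1_+$ preserves them and $\mathrm{Map}(-,G)$ turns them into homotopy limits. This needs no additivity, shifts, five lemma or $\lim^1$.
\item To put the generators in this class, read ``finite colimit of smooth schemes'' simplicially, as finite cell complexes built from $U\times\partial\Delta[k]\subset U\times\Delta[k]$. Then $X\times S^n_s$ is again a test object.
\item The hypothesis now says that $p^*\colon\mathrm{Map}(X_+,G_V)\to\mathrm{Map}((X\times\bA^1)_+,G_V)$ is bijective on free homotopy classes of maps out of every $S^n$.
\item The inclusion at $0\in\bA^1$ gives a retraction $s^*$ with $s^*p^*=\mathrm{id}$. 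Any class in $\ker s^*$ on $\pi_n$ is freely homotopic to its image under $p^*s^*$, which is null. So $\ker s^*=0$ and $p^*$ is a weak equivalence.
\item Your final step then finishes the proof: Proposition \ref{prop:1.5} together with the telescope $E\simeq\mathrm{hocolim}\,\SI^{\oU}_{V_n}E_{V_n}$.
\end{enumerate}

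Closing by invoking \cite[Lemma 6.12]{Hu03} is what the paper does, but as a proof of this very statement it is circular.
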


\begin{thm}\label{thm:2.2}
A map $f \colon G \rtarr G'$ in $\sS$ is an $\bA^1$-weak equivalence iff for every $X \in \mathrm{Spc}(S)$ that is a finite colimit of smooth schemes over $S$, and $V \in \oC(\oU)$, the induced map: $[\SI^{\oU}_V X_+, G]_{\oH(\sS)} \rtarr [\SI^{\oU}_V X_+, G']_{\oH(\sS)}$ is a bijection.
\end{thm}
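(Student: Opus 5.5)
The forward implication is formal. If $f$ is an $\bA^1$-weak equivalence, then $f$ becomes an isomorphism in $\oH(\sS)$, so the functor $[\SI^{\oU}_V X_+, -]_{\oH(\sS)}$ carries it to a bijection. All the content is in the converse, and I would prove it by a compact-generation argument in the triangulated category $\oH(\sS)$.

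\emph{Reduction to a cofiber.} First replace $f$ by a map between bifibrant objects in the stable $\bA^1$-local model structure, using functorial factorization. Let $C$ be its homotopy cofiber, so that $G \rtarr G' \rtarr C$ is an exact triangle in $\oH(\sS)$.

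\emph{Closing the test objects under shifts.} I want the hypothesis to hold for all simplicial shifts $S^n \sma \SI^{\oU}_V X_+$ with $n \in \bZ$, not only for $n=0$.
\begin{enumerate}
\item For positive shifts, $S^1$ is a finite simplicial set, so $S^1 \times X$ is again a finite colimit of smooth schemes. Since $S^1 \sma X_+$ is a stable retract of $(S^1 \times X)_+$, the hypothesis passes to $S^1 \sma \SI^{\oU}_V X_+$.
\item For negative shifts, use $S^Z \simeq S^1 \sma \mathbb{G}_m$ for a line $Z$ together with $\SI^{\oU}_{V\oplus Z} \simeq S^{-Z}\sma \SI^{\oU}_V$. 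These exhibit $S^{-1}\sma\SI^{\oU}_VX_+$ as a retract of $\SI^{\oU}_{V\oplus Z}(\mathbb{G}_m\times X)_+$.
\end{enumerate}
With the shifts in hand, the long exact sequences of the triangle and the five lemma give $[S^n\sma\SI^{\oU}_V X_+, C]_{\oH(\sS)}=0$ for all $n$, $V$ and $X$.

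\emph{Generation (the main obstacle).} The key step is to show that the objects $S^n \sma \SI^{\oU}_V X_+$ generate $\oH(\sS)$, in the sense that an object receiving no nonzero maps from them is zero.
\begin{enumerate}
\item Write any spectrum $E$ as a homotopy colimit over $\oC(\oU)$ of the shift desuspensions $\SI^{\oU}_V E_V$. This uses the adjunction $(\SI^{\oU}_V,\OM^{\oU}_V)$ and the description of spectrification in Proposition \ref{prop:2.1}.
\item Resolve each space $E_V$ by Proposition \ref{prop:1.5} as a homotopy colimit of colimits of smooth schemes.
\item Use compactness, Proposition \ref{prop:1.2}, to reduce further to finite colimits.
\end{enumerate}
It follows that the localizing subcategory generated by these objects is all of $\oH(\sS)$. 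The objects $Y$ with $[Y,C]=0$ in every degree form a localizing subcategory, and it contains the generators, so it is everything. Taking $Y=C$ gives $\mathrm{id}_C=0$, hence $C\simeq 0$.

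\emph{Conclusion.} Since $C\simeq 0$, the map $f$ is an isomorphism in $\oH(\sS)$. In a model category, a map is a weak equivalence exactly when it becomes an isomorphism in the homotopy category, so $f$ is an $\bA^1$-weak equivalence.

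The delicate point is the homotopy-colimit decomposition in the generation step. It must be carried out in the $\bA^1$-local structure, where fibrant replacement has to commute with the relevant filtered colimits; I would handle this using the compactness of $S^Z$ from Proposition \ref{prop:1.2}.
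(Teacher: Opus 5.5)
The paper does not prove this statement; it quotes it from \cite[Lemma 6.12 and Proposition 6.13]{Hu03}. Within the definitions of Section~\ref{subsec:local}, the converse is nearly definitional. The stable $\bA^1$-local structure is a left Bousfield localization of the stable simplicial one. So you replace $f$ by a map $\hat f\colon\hat G\rtarr\hat G'$ of $\bA^1$-local, stably fibrant spectra, for which $[T,\hat G]_{\oH(\sS)}=[T,\hat G]_{\oH_s(\sS)}$. For such a spectrum, the colimit in the definition of a stable simplicial weak equivalence is constant with value $[\SI^{\oU}_VX_+,\hat G]_{\oH_s(\sS)}$. The hypothesis therefore says $\hat f$ is a stable simplicial weak equivalence, and two-out-of-three finishes.

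Your argument is genuinely different: you stay in the triangulated category $\oH(\sS)$ and argue by generation. This costs more. You need $S^1$ to be invertible there, which rests on $S^Z\simeq S^1\sma\mathbb{G}_m$, an $\bA^1$-local fact. You also need the test class closed under shifts in both directions, and a decomposition of spectra into shift desuspensions. In exchange, the argument is self-contained and does not depend on the exact form of the stable simplicial criterion. It also shows why the test class is finite colimits of smooth schemes rather than just $\mathrm{Sm}_S$: after $\bA^1$-localization such colimits model simplicial spheres, which is exactly what your shift step uses.

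Two local steps need correcting.

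First, $S^1\times X$ is not a finite colimit of smooth schemes in $\mathrm{Spc}(S)$, because colimits of representable sheaves are simplicially discrete. What you want is that $\bA^1/\{0,1\}$ is $\bA^1$-equivalent to $S^1$: it is a pushout along a monomorphism, hence a homotopy pushout, and $\bA^1\simeq *$. Then $Y=(\bA^1/\{0,1\})\sma X_+$ is a finite colimit of smooth schemes, and $S^1\sma\SI^{\oU}_VX_+\simeq\SI^{\oU}_VY$ is a retract of $\SI^{\oU}_VY_+$.

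Second, your shift runs the wrong way. If $V=W\oplus Z$, then $E_V\iso\OM^ZE_W$ gives $\SI^{\oU}_VY\iso\SI^{\oU}_W(S^Z\sma Y)$, so desuspending \emph{shrinks} the index. Thus $S^{-1}\sma\SI^{\oU}_VX_+$ is a retract of $\SI^{\oU}_W(\mathbb{G}_m\times X)_+$, with $W\subset V$ of codimension one, not of $\SI^{\oU}_{V\oplus Z}(\mathbb{G}_m\times X)_+$. Choose the complement $Z$ free, so that $S^Z\simeq\mathbb{P}^1$.

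Finally, the ``delicate point'' you flag is not delicate:
\begin{enumerate}
\item $E\iso\colim_V\SI^{\oU}_VE_V$ holds on the nose, since a compatible family $\SI^{\oU}_VE_V\rtarr F$ is exactly a map $E\rtarr F$.
\item Directed colimits are homotopical.
\item The identity from the stable simplicial to the $\bA^1$-local structure is left Quillen, so homotopy colimits transfer with no fibrant-replacement issue.
\item Generation needs neither finite colimits nor compactness: the objects $\SI^{\oU}_VU_+$ with $U\in\mathrm{Sm}_S$ already generate.
\end{enumerate}
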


\noindent For the rest of this article, we will find it customary to drop the ``$\bA^1$-'' when referring to the distinguished classes of maps in the model structures on $\sT$ and $\sS$.\\

We denote by $\SI^{\infty}$ the functor $\SI^{\oU}_{\oU} \colon \sT \rtarr \sS$, and by $\OM^{\infty}$ the functor $\OM^{\oU}_{\oU} \colon \sS \rtarr \sT$. We call $\SI^{\infty} X$ the \emph{suspension spectrum} of $X$, and $\OM^{\infty}E$ the \emph{$\oU$-th space} or the \emph{zeroth space} of $E$. We have just shown that the pair $(\SI^{\infty},\OM^{\infty})$ is an adjunction between $\sT$ and $\sS$. This is the categorical bedrock of this article. We iron out some formal preliminaries and notation now. Denote by $\GA$ the adjunction monad $\OM^{\infty}\SI^{\infty}$. The adjunction provides a unit map $\eta \colon \mathrm{id}_{\sT} \rtarr \GA$, and a counit map $\epz \colon \SI^{\infty}\OM^{\infty} \rtarr \mathrm{id}_{\sS}$. We write $\GA[\sT]$ for the category of $\GA$-algebras in $\sT$ (i.e. spaces in $\sT$ with prescribed $\GA$-actions). It is easy to note that $\OM^{\infty} E$ is a $\GA$-algebra for every $E \in \sS$. The $\GA$-action is provided by the composite $\GA \OM^{\infty} E = \OM^{\infty} \SI^{\infty} \OM^{\infty} E \xrightarrow{\OM^{\infty}\epz_E} \OM^{\infty}E$. On the other hand, $\SI^{\infty}$ is a $\GA$-functor in the sense that there is a natural map $\beta \colon \SI^{\infty} \GA \rtarr \SI^{\infty}$ defined by $\epz_{\SI^{\infty}X} \colon \SI^{\infty} \GA X \rtarr \SI^{\infty} X$ for every $X \in \sT$ (such that $\be \circ \SI^{\infty}\mu = \be \circ \be$ and $\be \circ \SI^{\infty}\et = \id$).

We denote by $\OM^{\infty}_{\GA}$ the functor $\sS \rtarr \GA[\sT]$ obtained by just applying $\OM^{\infty}$. $\OM^{\infty}_{\GA}$ is also a right adjoint, but its left adjoint is \emph{not} $\SI^{\infty}$. While maps of spectra $\SI^{\infty} X \rtarr E$ (for an algebra $X$) do correspond to maps of spaces $X \rtarr E_{\oU}$, they do not a priori correspond to \emph{maps of algebras}. The fix for this is to coequalize with respect to the $\GA$-action on $\SI^{\infty}$ (seen as a $\GA$-functor) and the $\GA$-action on an algebra $X$. For each $X \in \GA[\sT]$ with $\GA$-action $\theta$, define $\SI^{\infty}_{\GA}X$ to be the coequalizer 
\[\xymatrix{
\SI^{\infty}\GA X \ar@<-0.5ex>[r]_{\SI^{\infty}\theta} \ar@<0.5ex>[r]^{\beta_X} & \SI^{\infty}X \ar[r]& \SI^{\infty}_{\GA}X.
}\]
\noindent This construction defines a functor $\SI^{\infty}_{\GA} \colon \GA[\sT] \rtarr \sS$ left adjoint to $\OM^{\infty}_{\GA}$, and in doing so provides a coequalized adjunction $(\SI^{\infty}_{\GA},\OM^{\infty}_{\GA})$ between $\GA[\sT]$ and $\sS$.\\

We now define a subcategory of connective spectra that is especially well-behaved with respect to our adjunction. Connective spectra are considered in analogy with their namesakes from topology, where they are the spectra with trivial negative homotopy groups. Let $\sS_c$ denote the smallest full subcategory of $\sS$ containing all suspension spectra of smooth schemes over $S$ that is closed under homotopy colimits and extensions. Equivalently $\sS_c$ is generated by colimits and extensions of $\SI^{\infty}_+ X$ for $X \in \mathrm{Sm}_S$ (here $\SI^{\infty}_+ X$ is used as shorthand for $\SI^{\infty}X_+$). The objects of $\sS_c$ are referred to as very effective spectra in the literature (see \cite[Definition 5.5]{SO12veff}), but we call them \emph{$\OM^{\infty}$-connective spectra} or \emph{connective spectra} for short. $\sS_c$ is not a triangulated category itself, but it forms the homologically positive part of a $t$-structure on $\sS$. It is symmetric monoidal however under the smash product. By Proposition \ref{prop:1.5}, and since $\SI^{\infty}$ commutes with colimits as a left adjoint, $\SI^{\infty} \colon \sT \rtarr \sS$ takes values in $\sS_c$. We can now begin our homotopical investigation into the $(\SI^{\infty},\OM^{\infty})$ adjunction.

\begin{prop}
If $f \colon X \rtarr Y$ is a weak equivalence in $\sT$, then $\SI^{\infty}f \colon \SI^{\infty} X \rtarr \SI^{\infty} Y$ is a weak equivalence in $\sS$.
\end{prop}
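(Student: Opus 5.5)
The plan is to prove this formally from the adjunction $(\SI^{\infty},\OM^{\infty})$ by showing that $\OM^{\infty}$ preserves fibrant objects, so that $\SI^{\infty}$ is a left Quillen functor up to the relevant detection. The criterion used is the definition of $\bA^1$-weak equivalence in $\sS$: $\SI^{\infty}f$ is a weak equivalence iff for every $\bA^1$-local $G$ the map $[\SI^{\infty}Y,G]_{\oH_s(\sS)}\rtarr[\SI^{\infty}X,G]_{\oH_s(\sS)}$ is a bijection. Without loss of generality $G$ may be taken stably simplicially fibrant, hence also $\bA^1$-fibrant after replacement in the $\bA^1$-local model structure.

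\textbf{Step 1: $\SI^{\infty}$ is left Quillen for the simplicial structures.} By the explicit description $\SI^{\infty}=LD_{\oU}$, the functor $\SI^{\infty}$ sends monomorphisms to levelwise cofibrations, since it is built levelwise from wedges of $\SI^Z X$. Dually, $\OM^{\infty}$ sends stable simplicial (acyclic) fibrations to simplicial (acyclic) fibrations: a stable fibration is in particular a levelwise fibration, and a stable acyclic fibration is a levelwise acyclic fibration. So $(\SI^{\infty},\OM^{\infty})$ is a Quillen pair between the simplicial model structures on $\sT$ and $\sS$. This gives natural bijections
\[[\SI^{\infty}X,G]_{\oH_s(\sS)}\iso[X,\OM^{\infty}G]_{\oH_s(\sT)}\]
for $X$ cofibrant (every object of $\sT$ is cofibrant) and $G$ fibrant.

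\textbf{Step 2: $\OM^{\infty}$ of an $\bA^1$-local fibrant spectrum is an $\bA^1$-local space.} For $Y\in\sT$, the adjunction bijection of Step 1, applied to $Y$ and to $Y\times\bA^1$, is natural in the projection $Y\times\bA^1\rtarr Y$. We have $\SI^{\infty}(Y\times\bA^1)\iso\SI^{\infty}Y\sma\bA^1_+$ compatibly with projections, because $\SI^{\infty}$ commutes with smashing with a space. This identification is levelwise $\SI^Z(Y\sma \bA^1_+)$ followed by spectrification, where the basepoint bookkeeping works since $Y\times\bA^1$ is pointed via the basepoint of $Y$ and $(Y\times \bA^1)$ relates to $Y\sma\bA^1_+$ by the identity on $Y\sma\bA^1_+ = (Y\times\bA^1)/(*\times\bA^1)$. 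Locality of $G$ then gives $\bA^1$-locality of $\OM^{\infty}G$, up to this quotient by the contractible $\bA^1$, which is harmless in $\oH_s$ after pointing conventions are fixed.

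\textbf{Step 3: conclude.} For $f$ a weak equivalence in $\sT$ and $G$ an $\bA^1$-local stably fibrant spectrum, the square relating $[\SI^{\infty}Y,G]\rtarr[\SI^{\infty}X,G]$ to $[Y,\OM^{\infty}G]\rtarr[X,\OM^{\infty}G]$ commutes by naturality. The bottom map is a bijection by the definition of $\bA^1$-weak equivalence in $\sT$, using Step 2. Hence $\SI^{\infty}f$ is an $\bA^1$-weak equivalence.

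The main obstacle is Step 2, specifically the careful identification of $\SI^{\infty}$ applied to the $\bA^1$-cylinder with the spectrum-level cylinder $\SI^{\infty}Y\sma\bA^1_+$, given the basepoint mismatch between $Y\times\bA^1$ and $Y\sma\bA^1_+$. The Step 1 claim that stable fibrations are levelwise fibrations is standard Bousfield–Friedlander and should be checked against \cite{Hu03}. If the cylinder comparison proves messy, the fallback is to cite that Hu's stable $\bA^1$-structure is a left Bousfield localization of the stable simplicial structure, in which the generating $\bA^1$-equivalences $\SI^{\oU}_V(X\times\bA^1)_+\rtarr\SI^{\oU}_V X_+$ are exactly the suspension-spectrum images of the unstable ones. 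With that in hand, $\SI^{\infty}$ is left Quillen for the $\bA^1$-structures, and Ken Brown's lemma applies directly since all objects of $\sT$ are cofibrant.
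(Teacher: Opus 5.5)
Your argument is the paper's: you show that $\OM^{\infty}G$ is $\bA^1$-local by pushing the locality of $G$ through the adjunction, then transport the bijection $[Y,\OM^{\infty}G]\rtarr[X,\OM^{\infty}G]$ back to $[\SI^{\infty}Y,G]\rtarr[\SI^{\infty}X,G]$. You are more careful than the paper about the derived adjunction on $\oH_s$. Only the right-adjoint half of your Step 1 is needed, and it is correct: stable fibrations are levelwise fibrations, and stable acyclic fibrations are levelwise acyclic fibrations. You also correctly notice the pointing mismatch in the cylinder, which the paper's proof silently skips by writing $W\sma\bA^1$.

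However, the way you dispose of that mismatch is wrong. The comparison $q\colon Y\times\bA^1\rtarr Y\sma\bA^1_+$ does not become an isomorphism after $\SI^{\infty}$. The collapsed piece $*\times\bA^1\iso\bA^1$ is also not contractible in $\oH_s(\sT)$: it is a discrete simplicial sheaf with sections $\mathcal{O}(U)$, so $\bA^1\rtarr *$ is only an $\bA^1$-weak equivalence. The quotient is therefore harmless only after mapping into a local target, and you must use the locality of $G$ to see it.

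Here is the repair. Base $\bA^1$ at $0$. Then $\bA^1\rtarr Y\times\bA^1\xrightarrow{q}Y\sma\bA^1_+$ is a cofiber sequence, and $\mathrm{pr}_1=\pi\circ q$, where $\pi\colon Y\sma\bA^1_+\rtarr Y$ is induced by $\bA^1_+\rtarr S^0$.

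Locality of $G$, applied to all shifts of $\SI^{\infty}S^0$, makes pullback along $\bA^1_+\rtarr S^0$ bijective on $[\SI^n\SI^{\infty}(-),G]$. Hence its one-sided inverse $i_0^*$ is bijective too, where $i_0\colon S^0\rtarr\bA^1_+$ is the inclusion at $0$. Since $\bA^1$ is the cofiber of $i_0$, the long exact sequence in $\oH_s(\sS)$ gives $[\SI^n\SI^{\infty}\bA^1,G]=0$ for all $n$.

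The long exact sequence of the first cofiber sequence then makes $q^*$ bijective on $[\SI^{\infty}(-),G]$. So $\mathrm{pr}_1^*=q^*\pi^*$ is bijective, and the adjunction shows that $\OM^{\infty}G$ is $\bA^1$-local in the paper's sense. With this repair your proof is complete.

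Your fallback via left Bousfield localization is also fine and avoids the issue. The localizing maps $(X\times\bA^1)_+\rtarr X_+$ have disjoint basepoints, and $\SI^{\infty}$ sends them to $\SI^{\infty}X_+\sma\bA^1_+\rtarr\SI^{\infty}X_+$. These are $\bA^1$-weak equivalences directly from the definition of $\bA^1$-local spectra.
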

\begin{proof}
Let $f \colon X \rtarr Y$ be a weak equivalence in $\sT$, so that for every $\bA^1$-local space $Z$, the induced map $[Y,Z]_{\oH_s(\sT)} \rtarr [X,Z]_{\oH_s(\sT)}$ is an isomorphism. Let $E$ be an arbitrary $\bA^1$-local spectrum in $\sS$. Note that $\OM^{\infty}E = E_{\oU}$ is $\bA^1$-local in $\sT$. This is because for every $W \in \sT$, $[\SI^{\infty}W,E] \rtarr [\SI^{\infty}W \wedge \bA^1, E]$ (induced by the projection) is an isomorphism. By the adjunction then, we can see that $[W,E_{\oU}] \rtarr [W \wedge \bA^1, E_{\oU}]$ (again induced by the projection) is an isomorphism. Now we know that $[Y,E_{\oU}]_{\oH_s(\sT)} \rtarr [X,E_{\oU}]_{\oH_s(\sT)}$ is an isomorphism. Passing to the adjunction, we have that $[\SI^{\infty}Y,E]_{\oH_s(\sS)} \rtarr [\SI^{\infty}X,E]_{\oH_s(\sS)}$ is an isomorphism as well. 
\end{proof}

The analogous result for $\OM^{\infty}$ follows from Theorem \ref{thm:2.2} and Theorem \ref{thm:1.1} (and Proposition \ref{prop:1.5}, which allows us to pass to finite colimits of smooth schemes and use \ref{thm:2.2}).
\begin{prop}\label{prop:2.3}
If $g \colon E \rtarr E'$ is a weak equivalence in $\sS$, then $g_{\oU} \colon E_{\oU} \rtarr E_{\oU}'$ is one in $\sT$.
\end{prop}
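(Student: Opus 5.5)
We use Theorem \ref{thm:1.1}: it suffices to show that $g_{\oU} \colon E_{\oU} \rtarr E'_{\oU}$ induces isomorphisms on $\pi_0^{\bA^1}$ and on all $\pi_n^{\bA^1}$ for every choice of basepoint. These are sheafifications of the presheaves $U \mapsto [\SI^n U_+, E_{\oU}]_{\oH(\sT)}$ for $U \in \mathrm{Sm}_S$ and $n \geq 0$, with $n=0$ giving $\pi_0$. So it is enough to show that $g_{\oU}$ induces bijections on these presheaves before sheafifying.

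\textbf{Step 1: pass to the derived adjunction.} First replace $g$ by a map between fibrant spectra. Factor $E \rtarr E^f$ and $E' \rtarr E'^f$ fibrantly, which gives a commutative square. In the stable $\bA^1$-local structure a fibrant spectrum is levelwise $\bA^1$-fibrant, so $\OM^{\infty}$ preserves fibrant objects; this is the right Quillen property that the $V$-th space functor inherits from its left adjoint $\SI^{\oU}_V$, which preserves cofibrations and acyclic cofibrations. With fibrant targets we have natural identifications
\[[\SI^n U_+, E_{\oU}]_{\oH(\sT)} \iso [\SI^{\infty}(\SI^n U_+), E]_{\oH(\sS)}.\]
The right-hand side is $[\SI^{\oU}_{\oU} X_+, E]_{\oH(\sS)}$ with $X = \SI^n U$ suitably based, and $\SI^n U_+$ is a finite colimit of smooth schemes (simplicial suspension is a finite colimit). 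Theorem \ref{thm:2.2} then shows that $g$ induces bijections on these groups for every such $X$ and $V=\oU$. Proposition \ref{prop:1.5} would be used if one wants to test against general spaces, but finite colimits of schemes already suffice here.

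\textbf{Step 2: basepoints and the reduction to fibrant objects.} The homotopy sheaves must be compared for every basepoint, not only the canonical one. Since $E_{\oU}$ is an $H$-group up to homotopy ($E_{\oU} \simeq \OM^{Z}E_V$ for every $Z$), translation by a section identifies the homotopy sheaves at different basepoints over each $U$. Alternatively, one can use unbased maps $U_+ \sma S^n$ together with a fiber sequence argument. It remains to check that the fibrant replacement maps $E \rtarr E^f$ induce weak equivalences on zeroth spaces. This is the main obstacle: a stable acyclic cofibration need not be a levelwise equivalence. Here we use that the presheaves above depend only on the maps in $\oH(\sS)$, so the comparison is made entirely at the level of homotopy classes. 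We interpret $\OM^{\infty}$ homotopically as its right derived functor, which is standard for Hu's spectra where $\OM^{\infty}$ is evaluated on $\OM$-spectra. Combining Steps 1 and 2 gives isomorphisms on all homotopy sheaves, and Theorem \ref{thm:1.1} concludes the proof.
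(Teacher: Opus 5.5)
Your Step~1 is the paper's argument. The paper's proof is just the remark that the proposition follows from Theorem~\ref{thm:1.1} and Theorem~\ref{thm:2.2} (with Proposition~\ref{prop:1.5}). It tacitly uses the same identification $[\SI^n U_+,E_{\oU}]_{\oH(\sT)}\cong[\SI^{\infty}\SI^n U_+,E]_{\oH(\sS)}$ that you do. That identification is the \emph{derived} adjunction, and it is valid only when $E_{\oU}$ computes the right derived zeroth space, e.g.\ when $E$ is fibrant.

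The genuine gap is Step~2, and you located it yourself. For general $E$ you need $E_{\oU}\rtarr E^f_{\oU}$ to be a weak equivalence, and this is never shown. The sentence ``the presheaves above depend only on the maps in $\oH(\sS)$'' is exactly the identification in question: for non-fibrant $E$, the presheaf $U\mapsto[\SI^n U_+,E_{\oU}]_{\oH(\sT)}$ is computed from the point-set space $E_{\oU}$. So the argument is circular, and declaring $\OM^{\infty}$ to mean $R\OM^{\infty}$ changes the proposition rather than proving it.

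This is not a removable technicality. Although the structure maps give isomorphisms $E_{\oU}\cong\OM^Z E_V$, the point-set functor $\OM^Z=\underline{\Hom}_{\sT}(S^Z,-)$ is not homotopy invariant on non-fibrant spaces. Concretely, take the point-set definitions of Section~\ref{sec:coordfree} and $E=\SI^{\infty}S^0$, with a fibrant replacement $E\rtarr E^f$:
\begin{enumerate}
\item The space $E_{\oU}$ is simplicially discrete. Its sections over a field $L$ are the base point together with stabilized germs of maps $(Z,0)\rtarr(T,0)$ with isolated zero.
\item The $\bA^1$-degrees of these sections all have rank $\geq 0$.
\item Since $E_{\oU}\rtarr\pi_0^{\bA^1}(E_{\oU})$ is an epimorphism, the map $\pi_0^{\bA^1}(E_{\oU})(L)\rtarr\pi_0^{\bA^1}(E^f_{\oU})(L)\cong \mathrm{GW}(L)$ misses $-\langle 1\rangle$.
\end{enumerate}
This happens even though $E\rtarr E^f$ is a weak equivalence. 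So what your argument (and the paper's sketch) actually proves is the statement for fibrant $E,E'$, i.e.\ for the derived zeroth space. As literally stated, the proposition needs that hypothesis.

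For that fibrant version, your route is heavier than necessary. The direction of Theorem~\ref{thm:2.2} you use is the trivial one: a weak equivalence $g$ is an isomorphism in $\oH(\sS)$, so it induces bijections on $[A,-]_{\oH(\sS)}$ for every spectrum $A$. Combine this with $[X,R\OM^{\infty}E]_{\oH(\sT)}\cong[\SI^{\infty}X,E]_{\oH(\sS)}$ for all $X\in\sT$. Yoneda then shows that $R\OM^{\infty}g$ is an isomorphism in $\oH(\sT)$, hence a weak equivalence. This is just Ken Brown's lemma for the right Quillen functor $\OM^{\infty}$ applied to a weak equivalence between fibrant objects. Theorem~\ref{thm:1.1}, the homotopy sheaves, and the basepoint/$H$-group discussion are then not needed.
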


Moreover, a stronger result holds in the case of connective spectra---$\OM^{\infty}$ \emph{reflects} weak equivalences.
\begin{prop}\label{prop:2.13}
Let $g \colon E \rtarr E'$ be a map between connective spectra in $\sS$. The map $g$ is a weak equivalence in $\sS$ iff $g_{\oU} \colon E_{\oU} \rtarr E'_{\oU}$ is one in $\sT$. 
\end{prop}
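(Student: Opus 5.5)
The forward direction is Proposition \ref{prop:2.3}, so only the converse needs an argument: assume $g_{\oU}$ is a weak equivalence in $\sT$ and deduce that $g$ is one in $\sS$. The plan is to take the cofiber $C$ of $g$ and show it is weakly contractible. Then $g$ is a weak equivalence, since $\oH(\sS)$ is triangulated.

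\emph{Step 1: $C$ is connective.} Since $\sS_c$ is closed under homotopy colimits, the cofiber $C$ of $g$ (a homotopy pushout of connective spectra) lies in $\sS_c$.

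\emph{Step 2: $\OM^{\infty}$ kills the cofiber.} We want $\OM^{\infty} C$ contractible, but $\OM^{\infty}$ preserves fiber sequences, not cofiber sequences. So we rotate the triangle: $\Omega C \rtarr E \rtarr E'$ is a fiber sequence in the stable setting, where $\Omega$ is the loops by $S^1_s$. Applying $\OM^{\infty}$, which is right Quillen and so preserves homotopy fibers, gives a fiber sequence $\OM^{\infty}\Omega C \rtarr E_{\oU} \rtarr E'_{\oU}$ in $\sT$. Theorem \ref{thm:1.2} together with the hypothesis on $g_{\oU}$ then shows that $\pi_n^{\bA^1}(\OM^{\infty}\Omega C) = 0$ for $n \geq 0$, at least after care with $\pi_0$ at the end of the sequence, which uses that $g_{\oU}$ is a $\pi_0$-bijection. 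This says that $\pi_n^{\bA^1}(\OM^{\infty} C)$ vanishes for all $n \geq 1$, with basepoints handled by the group structure on infinite loop spaces. For $\pi_0^{\bA^1}(\OM^{\infty}C)$: the cokernel of $\pi_0 E_{\oU}\rtarr \pi_0 E'_{\oU}$ is trivial. This is exactly where we need the connectivity of $\Omega$-shifted terms, namely that $\pi_{-1}$-type sheaves of $E$ vanish. Concretely, we use that $[\SI^{\infty}_+U,\Sigma E]=0$ for connective $E$ and smooth $U$, which follows from the $t$-structure. So $\OM^{\infty} C$ has trivial $\bA^1$-homotopy sheaves, and Theorem \ref{thm:1.1} makes it contractible.

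\emph{Step 3: a connective spectrum with contractible zeroth space is contractible.} This is the main obstacle. By the $t$-structure characterization of $\sS_c$, for $C \in \sS_c$ one has $C\simeq 0$ iff $[\SI^{\infty}_+U[n], C]_{\oH(\sS)} = 0$ for all $U\in \mathrm{Sm}_S$ and $n\geq 0$. This is the very effective analogue of ``connective with vanishing nonnegative homotopy is zero,'' proven by noting that the right orthogonal of the generators of $\sS_{c}$ inside $\sS_c$ is zero. The reason is that the identity of $C$, built from these generators by colimits and extensions, would factor through zero.

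Granting this criterion, adjunction gives
\[
[\SI^{\infty}_+U[n], C] \iso [S^n_s \sma U_+, \OM^{\infty} C]_{\oH(\sT)} = 0,
\]
since $\OM^{\infty}C$ is contractible. Hence $C\simeq 0$. The delicate point is justifying the orthogonality statement in $\sS_c$ via the homotopy-colimit/extension generation (by transfinite induction on the cell structure, using Theorem \ref{thm:2.2} to test against generators), and checking that the $\pi_0$ surjectivity in Step 2 really holds.
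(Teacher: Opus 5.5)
Your strategy is sound and differs from the paper's: pass to the cofiber $C\in\sS_c$, show $\OM^{\infty}C\simeq *$, then use that an object of $\sS_c$ right-orthogonal to every $\SI^{\infty}_+U[n]$, $n\geq 0$, is zero. However, the justification of the $\pi_0$ step fails as written. You correctly see that a $\pi_{-1}$-type term of $E$ must vanish. But the claim that $[\SI^{\infty}_+U,\Sigma E]=0$ for all connective $E$ and smooth $U$ is false, and the $t$-structure cannot supply it. It only gives vanishing of maps from $\sS_c$ into objects right-orthogonal to $\sS_c$, whereas $\Sigma E$ lies in $\sS_c$ again.

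Here is a counterexample. Take $E=\SI^{\infty}(\mathbb{G}_m,1)$, which is in $\sS_c$ as the cofiber of the map $\SI^{\infty}_+S\rtarr\SI^{\infty}_+\mathbb{G}_m$ given by the unit section. Then $\Sigma E\simeq\SI^{\infty}\mathbb{P}^1$. For $U=\mathbb{P}^1$, the collapse $\SI^{\infty}_+\mathbb{P}^1\rtarr\SI^{\infty}\mathbb{P}^1$ is the projection in the stable splitting $\SI^{\infty}_+\mathbb{P}^1\simeq\SI^{\infty}\mathbb{P}^1\vee\SI^{\infty}S^0$, hence nonzero.

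What you actually need is weaker and true: the Nisnevich sheaf associated to $U\mapsto[\SI^{\infty}_+U,\Sigma E]$ vanishes for $E\in\sS_c$. This suffices because the exact sequence of Theorem~\ref{thm:1.2} is one of sheaves. It is a non-formal input, namely Morel's stable $\bA^1$-connectivity theorem. That theorem places each $\SI^{\infty}_+X$ in the non-negative part of the homotopy $t$-structure, which is closed under homotopy colimits and extensions and so contains $\sS_c$. This is the same connectivity input the paper cites.

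With that repair your argument is complete, and Step 3 is easier than you suggest. Consider the class of $A\in\sS$ with $\mathrm{Map}(A,C)\simeq *$. By your adjunction computation it contains every $\SI^{\infty}_+U$, and it is closed under homotopy colimits and extensions. So it contains $C$, hence $\mathrm{id}_C\simeq 0$. No cell induction or appeal to Theorem~\ref{thm:2.2} is needed.

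The paper argues differently. It reduces, via Hu's Proposition~6.10, to a map $\SI^{\infty}Y_+\rtarr\SI^{\infty}Y'_+$ with $Y,Y'$ finite colimits of smooth schemes. It then checks the Whitehead criterion of Theorem~\ref{thm:2.2} against the $\SI^{\oU}_VX_+$: by adjunction for $V=\oU$, and for $V\neq\oU$ by a connectivity estimate asserting both sides vanish.

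Your route handles an arbitrary map of connective spectra directly. It uses only the triangulated structure, the long exact sequence, and the generation of $\sS_c$, and it isolates the single place where connectivity enters. Once repaired, it is also the more reliable argument, for two reasons:
\begin{itemize}
\item The paper's reduction does not transport the hypothesis on $g_{\oU}$ to the pieces, nor does it account for extensions.
\item Its vanishing claim for $V\neq\oU$ fails for the same reason as your presheaf claim. For $X=\mathbb{P}^1$, $\oY=S$ and $V$ of codimension one, one gets $[\SI^{\oU}_VX_+,\SI^{\infty}\oY_+]\cong[\SI^{\infty}_+\mathbb{P}^1,\SI^{\infty}\mathbb{P}^1]\neq 0$.
\end{itemize}
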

\begin{proof}
It only remains to prove the backward direction, beginning with a map $g \colon E \rtarr E'$ such that $g_{\oU} \colon E_{\oU} \rtarr E'_{\oU}$ is a weak equivalence in $\sT$.
To reduce the problem, we invoke \cite[Proposition 6.10]{Hu03}--a descent principle for weak equivalences of spectra which states that (small) directed colimits of spectra preserve weak equivalences. 
In light of this reduction, we can restrict ourselves to the case of a weak equivalence $g_{\oU} \colon (\SI^{\infty} Y_+)_{\oU} \rtarr (\SI^{\infty} Y'_+)_{\oU}$, where $Y$ and $Y'$ are finite colimits of smooth schemes.

Let $X$ be a finite colimit of smooth schemes. By assumption, \[[\SI^{\oU}_{\oU}X_+, \SI^{\infty}Y_+]_{\oH(\sS)} \rtarr [\SI^{\oU}_{\oU}X_+, \SI^{\infty}Y'_+]_{\oH(\sS)}\] \noindent is a bijection. Let $V \in \oC(\oU) \setminus \{\oU\}$. 
We observe that for any finite colimit of smooth schemes $\oY$, $[\SI^{\oU}_V X_+, \SI^{\infty}\oY_+]_{\oH(\sS)}$ is trivial. Note the following chain of isomorphisms:
\begin{equation*}
    \begin{split}
        [\SI^{\oU}_V X_+, \SI^{\infty}\oY_+]_{\oH(\sS)} &\cong [X_+, (\SI^{\infty}\oY_+)_V]_{\oH(\sS)}\\
        &\cong \left[X_+, \colim_{(W,Z) \in V \downarrow \oC(\oU)}\OM^Z\bigvee_{T \oplus W = \oU}\SI^{T}\oY_+\right]_{\oH(\sS)},
    \end{split}
\end{equation*}
where $T \colon \oU \rtarr V$ in $\oC(\oU)$. Now using Proposition \ref{prop:1.2} on $X_+$ (or rather since finite colimits of compact objects are compact), we see that:
\begin{equation*}
    \begin{split}
        [\SI^{\oU}_V X_+, \SI^{\infty}\oY_+]_{\oH(\sS)} &\cong \colim_{(W,Z) \in V \downarrow \oC(\oU)} \left[X_+, \OM^Z \bigvee_{T \oplus W = \oU}\SI^{T}\oY_+\right]_{\oH(\sS)}\\
        &\cong \colim_{(W,Z) \in V \downarrow \oC(\oU)} \left[\SI^{Z} X_+, \bigvee_{T \oplus W = \oU}\SI^{T}\oY_+\right]_{\oH(\sS)}.
    \end{split}
\end{equation*}
\noindent To finish the argument, note that $\dim Z < \dim (T)=\mathrm{codim}(W)$ for every $T$ such that $T \oplus W = \oU$ (since $V \neq \oU$), and $\SI^{T} \oY_+$ is $(\dim(T)-1)$-connected\footnote{See remarks following \cite[Theorem 18]{morel12A1AT}.}. 
We conclude from Theorem \ref{thm:2.2} that $g$ is a weak equivalence in $\sS$.
\end{proof}

\subsection{Simplicial axioms}\label{subsec:salist}
The proof of the homotopical monadicity theorem, using the argument of \cite{KMZ25}, will rely on a derived homotopy theory defined by bar resolutions of $\GA$-algebras. 
The two-sided monadic bar construction used here is built simplicially, and as such we list properties of simplicial objects in $\sT$ and $\sS$ that will be essential for the proof.
The proofs of these properties constitute Section \ref{sec:simpob}.\\
\indent We will write $s\aC$ for the simplicial objects in a category $\aC$. For a functor $F \colon \aC \rtarr \aD$, we will write $F_*$ for the induced functor on 
simplicial objects $F_* \colon s\aC \rtarr s\aD$ given by applying $F$ objectwise on simplices.

\begin{salist}
    \item \label{salist:SA1} There exist realization functors $|-| \colon s\sT \rtarr \sT$ and $|-| \colon s\sS \rtarr \sS$ that are both left adjoints. The composite $|c_*|$ where $c_*$ is the constant simplicial object functor $\aC \rtarr s\aC$, is the identity for $\aC = \sT, \sS$.
    \item \label{salist:SA2} $|-|$ preserves homotopies on $s\sT$ and $s\sS$.
    \item \label{salist:SA3} $|-|$ preserves weak equivalences between Reedy cofibrant objects in both $s\sT$ and $s\sS$.
    \item \label{salist:SA4} For $K_* \in s\sT$, there is a natural isomorphism $\SI^{\infty}|K_*| \cong |\SI^{\infty}_* K_*|$.
    \item \label{salist:SA5} $|-| \colon s\sS \rtarr \sS$ takes levelwise connective spectra to connective spectra. 
    \item \label{salist:SA6} For $K_* \in s\sS$, the natural map $\ga \colon |\OM^{\infty}_* K_*| \rtarr \OM^{\infty} |K_*|$, given by the adjoint to the composite $\xymatrix@1{\SI^{\infty}|\OM^{\infty}_* E_*| \cong |\SI^{\infty}_*\OM^{\infty}_*E_*| \ar[r]^-{|\epz_*|} & |E_*|}$
    is, for levelwise connective $E_*$, a weak equivalence in $\sT$.
\end{salist}

\noindent We will also frequently use the following fact (which follows formally from \textbf{\ref{salist:SA1}}). 
\begin{obs}\label{obs:4.1}
For a simplicial object $X_* \in s\sT$ and an object $J \in \sT$, a map $f \colon X_0 \rtarr J$ such that $fd_0 = fd_1$
extends to a map $\xi_* \colon X_* \rtarr c_* J$ such that $\xi_0 = f$, and therefore induces a map $\xi:=|\xi_*| \colon |X_*| \rtarr J$ in $\sT$.
\end{obs}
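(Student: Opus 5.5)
Observation \ref{obs:4.1} follows formally from the universal property of realization in \textbf{\ref{salist:SA1}}. The plan has two parts. First, build a simplicial map $\xi_* \colon X_* \rtarr c_*J$ with $\xi_0 = f$. Second, set $\xi := |\xi_*|$ and use $|c_*J| = J$ from \textbf{\ref{salist:SA1}}.

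For the first part, the constant simplicial object $c_*J$ has $J$ in every degree, and all faces and degeneracies are identities. A simplicial map $X_* \rtarr c_*J$ is therefore the same as a family $\xi_n \colon X_n \rtarr J$ satisfying $\xi_{n-1} d_i = \xi_n$ and $\xi_{n+1} s_j = \xi_n$ for all admissible $i,j$. Any such family is determined by $\xi_0$. Indeed, if we pick the vertex map $v_n \colon X_n \rtarr X_0$ induced by $[0] \rtarr [n]$, $0 \mapsto 0$, then $\xi_n = \xi_0 \circ v_n$. So we will define $\xi_n := f \circ v_n$.

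The key step is checking that $f\circ v_n$ is compatible with every face and degeneracy. For any simplicial operator $\alpha^* \colon X_n \rtarr X_m$ induced by $\alpha \colon [m] \rtarr [n]$, we must show $f v_m \alpha^* = f v_n$. Now $v_m \alpha^*$ is the vertex map $X_n \rtarr X_0$ at vertex $\alpha(0)$. So it suffices to show that $f$ composed with any two vertex maps $X_n \rtarr X_0$ gives the same map.

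To prove this, take vertices $a<b$ in $[n]$ and let $e \colon [1] \rtarr [n]$ be the edge $0\mapsto a$, $1\mapsto b$. The vertex map at $a$ factors as $d_1 \circ e^*$, since $d_1$ keeps vertex $0$. Likewise the vertex map at $b$ factors as $d_0 \circ e^*$. The hypothesis $fd_0 = fd_1$ then gives $f\circ(\text{vertex } a) = f\circ(\text{vertex } b)$. Hence all the $\xi_n$ are well defined and simplicial, and clearly $\xi_0 = f v_0 = f$.

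For the second part, functoriality of $|-|$ together with $|c_*J| \cong J$ from \textbf{\ref{salist:SA1}} yields $\xi = |\xi_*| \colon |X_*| \rtarr J$.

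The only step needing any care is the vertex-independence argument, and even that is purely combinatorial. It could alternatively be phrased by noting that $c_*J$ is the right Kan extension along $\Delta^{\le 0}$ (coskeleton-type) truncated at level $1$. I expect no genuine obstacle.
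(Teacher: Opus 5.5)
Your proof is correct and follows the route the paper indicates: the paper only says the observation follows formally from \textbf{\ref{salist:SA1}}, and you fill in the implicit combinatorial step (setting $\xi_n = f\circ v_n$ and checking vertex-independence along edges using $fd_0 = fd_1$), then realize and use $|c_*J| = J$, which is the only point where \textbf{\ref{salist:SA1}} is actually needed. One minor quibble about your closing aside: the right Kan extension along $\Delta^{\le 0}$ is the $0$-coskeleton (with $J^{n+1}$ in degree $n$), not $c_*J$; the correct reformulation is that $c_*J$ is $1$-coskeletal, or equivalently that $c_*$ is right adjoint to $\mathrm{colim}_{\Delta^{\mathrm{op}}}$, which is computed as the coequalizer of $d_0,d_1$.
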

It is a bit awkward to list properties that we have not yet proved as ``preliminaries,'' but the hope is that the reader will, until Section \ref{sec:simpob}, take their validity for granted. Without saying much about their proofs here, we will only say that while \textbf{\ref{salist:SA1}-\ref{salist:SA5}}
have relatively quicker proofs, \textbf{\ref{salist:SA6}} requires some labor. Consequently, we will sometimes refer to the former as the \emph{easy axioms}, and the latter as the \emph{hard axiom}.

\section{The proofs of Theorems \ref{thm:0.1} and \ref{thm:0.2}}\label{sec:thm}
\subsection{Proof of the crude monadicity theorem}
The proof of crude monadicity presented here is due to an upcoming article \cite{KMZ25} and discussions with J.P. May. First, we define the two-sided bar constructions that are essential for the proof. 
Let us return to the monad $(\GA, \mu, \et)$, where $\GA = \OM^{\infty}\SI^{\infty}$. We noted in Section \ref{sec:coordfree} that $\SI^{\infty}$ was a $\GA$-functor in the sense that there was a natural map $\be \colon \SI^{\infty} \GA \rtarr \SI^{\infty}$ defined by $\be_X = \epz_{\SI^{\infty}X}$.
We can also consider $\GA$ as a $\GA$-functor with the action map $\mu = \OM^{\infty}\epz \colon \GA \GA \rtarr \GA$. For any $\GA$-algebra $(Y,\theta)$, we have the two-sided bar constructions:
\[\overline{Y} = B(\GA,\GA,Y) \; \; \textrm{and} \; \; \bE Y = B(\SI^{\infty},\GA,Y),\]
where $\overline{Y} \in \sT$, and $\bE Y \in \sS$. Recall that $B(\GA,\GA,Y)$ is the realization of the simplicial object in $\sT$ (denoted $B_*(\GA,\GA,Y)$) whose $q$-simplices are given by $\GA^{q+1}Y$, and $B(\SI^{\infty},\GA,Y)$ is the realization of the simplicial object in $\sS$ (denoted $B_*(\SI^{\infty},\GA,Y)$ or $\bE_* Y$) 
whose $q$-simplices are given by $\SI^{\infty}\GA^qY$. The zeroth face is given by the action of $\GA$ (which is either $\mu$ or $\be$), the $i$-th face for $0 < i < q$ is given by $\mu$, and the $q$-th face is given by $\theta$. The degeneracies are all given by $\et \colon \id \rtarr \GA$. By Observation \ref{obs:4.1}, applied to $\tha \colon \GA Y \rtarr Y$, there is a natural simplicial map $\ze_* \colon B_{*}(\GA,\GA,Y) \rtarr c_* Y$ which 
induces a natural map $\ze \colon \ol{Y} \rtarr Y$. We will also sometimes find it notationally convenient to write $B_*(\GA,\GA,Y) = \OM^{\infty}_* \bE_* Y$ so that $\ol{Y} = |\OM^{\infty}_* \bE_* Y|$.
An extra degeneracy argument shows that:
\begin{lem}\label{lem:4.1}
For $Y \in \GA[\sT]$, the natural map $\ze \colon \overline{Y} \rtarr Y$ in $\sT$ induced by $\mu \colon \GA^{q+1} Y \rtarr Y$ is a homotopy equivalence with homotopy inverse $\nu \colon Y \rtarr \overline{Y}$ induced by the maps $\et \colon Y \rtarr \GA^{q+1}Y$.
\end{lem}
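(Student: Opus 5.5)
The plan is the standard extra degeneracy argument, carried out at the simplicial level and then pushed through realization using \textbf{\ref{salist:SA1}} and \textbf{\ref{salist:SA2}}. First, define $\nu$: the unit $\et_Y \colon Y \rtarr \GA Y$ gives a map into the $0$-simplices of $B_*(\GA,\GA,Y)$. Composing with the canonical map from the constant simplicial object is a little delicate, because a map $c_*Y \rtarr B_*(\GA,\GA,Y)$ needs $q$-simplex components $Y \rtarr \GA^{q+1}Y$ compatible with all faces. The natural choice is the iterated unit $\et^{q+1} \colon Y \rtarr \GA^{q+1}Y$ (this is the ``$\et$'' of the statement). I would check compatibility with faces and degeneracies using the monad identities $\mu\et = \id = \mu\GA\et$ and the unit axiom $\tha\et = \id$ for the algebra. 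If some face identity fails on the nose, I would instead define $\nu$ as $|\cdot|$ applied to the map of simplicial objects obtained by adjunction from \textbf{\ref{salist:SA1}}, using $|c_*Y| = Y$. Either way, $\ze \circ \nu = |\ze_* \circ \nu_*| = |c_*(\tha\et)| = \id_Y$ on the nose, since $\ze_0 = \tha$.

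The main step is the other composite, $\nu\circ\ze \simeq \id_{\ol{Y}}$. Here I would use the extra degeneracy: the augmented simplicial object $B_*(\GA,\GA,Y) \rtarr Y$ has maps
\[s_{-1} = \et_{\GA^{q}Y} \colon \GA^{q}Y \rtarr \GA^{q+1}Y\]
(with $\GA^0 Y = Y$), inserting a unit on the outside. These satisfy $d_0 s_{-1} = \id$ (since $\mu\et = \id$) and $d_i s_{-1} = s_{-1} d_{i-1}$ for $i>0$ (by naturality of $\et$), together with the analogous degeneracy relations. From these I would write down the standard simplicial homotopy $h_j \colon \GA^{q+1}Y \rtarr \GA^{q+2}Y$, $0\le j\le q$, built by composing $s_{-1}$ with iterated faces, between $\nu_*\ze_*$ and $\id$. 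In other words, the augmented object is simplicially contractible onto $c_*Y$.

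Finally, I would apply \textbf{\ref{salist:SA2}}: realization preserves homotopies, so $|\nu_*\ze_*| = \nu\ze$ is homotopic to $|\id| = \id$, and $\ze$ is a homotopy equivalence with inverse $\nu$. I expect the main obstacle to be bookkeeping in the second step. The extra degeneracy lives on the left $\GA$ factor, so I need to make sure the homotopy's direction and indexing match the convention for simplicial homotopies that \textbf{\ref{salist:SA2}} accepts. A second point to check is that the homotopy constructed is a genuine simplicial homotopy in $s\sT$, i.e.\ given by a map $X_*\times \Delta[1]_* \rtarr X_*$ levelwise, rather than merely a combinatorial one.
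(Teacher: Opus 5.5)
Your proposal is correct and is exactly the extra degeneracy argument the paper invokes without further detail. The iterated units $\et^{q+1}$ do form a simplicial map $\nu_* \colon c_*Y \rtarr B_*(\GA,\GA,Y)$ on the nose (by $\mu\et = \id = \mu\GA\et$, $\tha\et = \id$ and naturality of $\et$), so your fallback definition of $\nu$ is unnecessary; and with $s_{-1} = \et_{\GA^{q}Y}$ the standard formula $h_j = s_{-1}^{j+1}d_0^{j}$ gives a simplicial homotopy from $\id$ to $\nu_*\ze_*$ (one checks $d_{q+1}h_q = \nu_q\ze_q$ via naturality of $\et$), which \textbf{\ref{salist:SA2}} realizes to the required homotopy $\nu\ze \simeq \id$.
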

It is in this sense that we can see $\overline{Y}$ as a derived resolution of $Y \in \GA[\sT]$. However, there is a conceptual gap in the above result. $Y$ is a $\GA$-algebra, and the map $\mu \colon \GA^{q+1} Y \rtarr \GA^q Y$ is a map of $\GA$-algebras.
We would like to say that $\overline{Y}$ is a $\GA$-algebra as well, and $\ze$ is a map of $\GA$-algebras---so that we can really think of $\overline{Y}$ as a derived resolution of $Y$ in the category of $\GA$-algebras. However, $\OM^{\infty}$ does not commute with $|-|$ on the nose!
It is therefore not at all clear that there is a canonically induced $\GA$-algebra structure on $\overline{Y}$. We will soon elaborate on the kind of formal structure that $\overline{Y}$ possesses. 

Now we describe a conceptually dual procedure on $\sS$. Let $\hat{\GA} := \SI^{\infty} \OM^{\infty}$ denote the adjunction comonad and let $E \in \sS$. Observe that $\bE_*\OM^{\infty}_{\GA}E$ has $q$-simplices $\hat{\GA}^{q+1}E$ and all faces given by $\epz$. We define the following as a conceptual dual to $\overline{Y}$ for $Y \in \GA[\sT]$.
\[\hat{E} := B(\SI^{\infty},\GA,\OM^{\infty}_{\GA}E) = \bE \OM^{\infty}_{\GA} E.\]
\noindent Since $\SI^{\infty}Z$ is always connective for $Z \in \sT$, and $|-|$ takes levelwise connective spectra to connective spectra (by \textbf{\ref{salist:SA5}}), we see that $\bE Y$ is connective for any $Y \in \GA[\sT]$. 
Therefore $\hat{E}$ is connective for any $E \in \sS$. The following should be seen as the dual to Lemma \ref{lem:4.1}.
\begin{lem}\label{lem:4.3}
    For $E \in \sS$, the face maps $\epz \colon \hat{\GA}^{q+1} \rtarr \hat{\GA}^q$ induce a natural map $\xi \colon \hat{E} \rtarr E$ in $\sS$. If $E$ is connective, then $\xi$ is a weak equivalence.
\end{lem}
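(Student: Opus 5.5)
The construction of $\xi$ is formal, and the plan is to handle it first and then obtain the weak equivalence from the reflection result Proposition \ref{prop:2.13} together with the extra degeneracy argument behind Lemma \ref{lem:4.1}.

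\emph{Constructing $\xi$.} The simplicial spectrum $\bE_*\OM^{\infty}_{\GA}E$ has $0$-simplices $\hat{\GA}E = \SI^{\infty}\OM^{\infty}E$. Its faces $d_0,d_1 \colon \hat{\GA}^2E \rtarr \hat{\GA}E$ are $\epz_{\hat{\GA}E}$ and $\hat{\GA}\epz_E$. The counit $\epz_E \colon \hat{\GA}E \rtarr E$ satisfies $\epz_E d_0 = \epz_E d_1$ by naturality of $\epz$. The analogue of Observation \ref{obs:4.1} in $s\sS$ follows formally from \textbf{\ref{salist:SA1}}. It extends $\epz_E$ to a simplicial map $\xi_* \colon \bE_*\OM^{\infty}_{\GA}E \rtarr c_*E$ whose $q$-th component is the iterated counit $\hat{\GA}^{q+1}E \rtarr E$. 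Realizing and using $|c_*E| = E$ gives $\xi = |\xi_*|$, natural in $E$.

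\emph{Reducing to zeroth spaces.} Now assume $E$ is connective. By \textbf{\ref{salist:SA5}}, $\hat{E}$ is connective, since each $\SI^{\infty}\GA^q\OM^{\infty}E$ is a suspension spectrum. Proposition \ref{prop:2.13} then reduces the problem to showing that $\OM^{\infty}\xi$ is a weak equivalence in $\sT$. The levelwise spectra are suspension spectra, hence connective, so \textbf{\ref{salist:SA6}} shows that
\[\ga \colon |\OM^{\infty}_*\bE_*\OM^{\infty}_{\GA}E| \rtarr \OM^{\infty}\hat{E}\]
is a weak equivalence. Naturality of $\ga$ applied to $\xi_*$ gives $\OM^{\infty}\xi \circ \ga = \ga_{c_*E}\circ|\OM^{\infty}_*\xi_*|$. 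Here $\ga_{c_*E}$ is the identity of $\OM^{\infty}E$, using \textbf{\ref{salist:SA1}} for constant objects and the triangle identity. So by two-out-of-three it suffices to show that
\[|\OM^{\infty}_*\xi_*| \colon \overline{\OM^{\infty}_{\GA}E} = |B_*(\GA,\GA,\OM^{\infty}E)| \rtarr \OM^{\infty}E\]
is a weak equivalence. This is exactly the map $\ze$ of Lemma \ref{lem:4.1} for the $\GA$-algebra $Y=\OM^{\infty}_{\GA}E$, whose action is $\tha = \OM^{\infty}\epz_E$, and $\ze$ is a homotopy equivalence by that lemma. Alternatively, the extra degeneracy $\et_{\OM^{\infty}E}$ on $\OM^{\infty}_*\bE_*\OM^{\infty}_{\GA}E$ gives a simplicial homotopy equivalence to the constant object, which realizes to a homotopy equivalence by \textbf{\ref{salist:SA2}}.

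\emph{Main obstacle.} The key input is \textbf{\ref{salist:SA6}}, which is assumed here. The delicate step is the bookkeeping identifying $\ga$ on the constant object $c_*E$ with the identity, and checking that the square above genuinely commutes. One must verify that the isomorphism $\SI^{\infty}|c_*X| \cong |\SI^{\infty}_*c_*X|$ from \textbf{\ref{salist:SA4}} is compatible with the identification $|c_*-|=\id$, so that the adjoint of $|\epz_*|$ on constant objects is the unit–counit triangle composite, namely the identity.
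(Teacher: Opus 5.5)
Your proposal is correct and follows the paper's proof. It builds $\xi$ via Observation \ref{obs:4.1}, reduces via Proposition \ref{prop:2.13} to $\OM^{\infty}\xi$, and concludes by two-out-of-three from the triangle $\OM^{\infty}\xi\circ\ga=\ze$, with $\ga$ a weak equivalence by \textbf{\ref{salist:SA6}} and $\ze$ a homotopy equivalence by Lemma \ref{lem:4.1}. Your check of that triangle through naturality of $\ga$ and $\ga_{c_*E}=\id$ spells out what the paper compresses into ``passing to the adjoint and then appealing to Observation \ref{obs:4.1}.''
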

\begin{proof}
    The first part of the statement is a direct application of Observation \ref{obs:4.1} to $\epz \colon \hat{\GA}E \rtarr E$. Now suppose $E$ is connective. Since $\hat{E}$ is connective, by Proposition \ref{prop:2.13}, it suffices to show that $\OM^{\infty}\xi \colon \OM^{\infty}\hat{E} \rtarr \OM^{\infty} E$ is a weak equivalence. Consider the following diagram, which commutes by passing to the adjoint and then appealing to Observation \ref{obs:4.1}:
    \[
    \xymatrix{
        \overline{\OM^{\infty}_{\GA} E} = B(\GA,\GA,\OM^{\infty}E) \ar[dr]_{\ze} \ar[rr]^-{\ga} & & \OM^{\infty} B(\SI^{\infty},\GA,\OM^{\infty}E) = \OM^{\infty} \hat{E} \ar[dl]^{\OM^{\infty}\xi}\\
        & \OM^{\infty}E & 
    }
    \]
    \noindent where $\ga$ and $\ze$ are weak equivalences (by \textbf{\ref{salist:SA6}} and Lemma \ref{lem:4.1}). It follows by the two-out-of-three property that $\xi$ is a weak equivalence as well.
\end{proof}
\noindent Just as we had thought of $\overline{Y}$ as a derived resolution of $Y \in \GA[\sT]$, we should think of $\hat{E}$ as a derived connective cover of $E$ whose $\oU$th space is weak equivalent to the derived resolution of the $\oU$th space of $E$.\\

Piecing together the derived language from above, we get our first main result---a crude step towards homotopical monadicity.

\maintheorem*


\begin{proof}
    The functors $\bE$ and Bar have already been discussed. We note that $\OM^{\infty}_{\GA}$ and $\bE$ preserve weak equivalences since $\OM^{\infty}$ and $\SI^{\infty}$ preserve weak equivalences, and $|-|$ preserves weak equivalences between Reedy cofibrant spectra (this is \textbf{\ref{salist:SA3}}). Lemma \ref{lem:4.3} provides a natural weak equivalence $\xi \colon \hat{E} \rtarr E$ for every connective spectrum $E$.
    For $Y \in \GA[\sT]$, the homotopy equivalence $\ze \colon \overline{Y} \rtarr Y$ is given by Lemma \ref{lem:4.1}, and the weak equivalence $\ga \colon \overline{Y} \rtarr \OM^{\infty}_{\GA} \bE Y$ is furnished by \textbf{\ref{salist:SA6}}.
\end{proof}

The conceptual gap described following Lemma \ref{lem:4.1} is much more apparent now. We would really like to say that $\overline{Y}$ is a $\GA$-algebra such that $\ze$ and $\ga$ are $\GA$-algebra maps. As a result, it is a priori not clear that Theorem \ref{thm:0.1} provides an equivalence between the homotopy categories of $\GA$-algebras and connective spectra.
Nonetheless, with some conceptual investigation into the relation between $\GA$-algebras and simplicial $\GA$-algebras, we will establish a sharpened result.\\

\subsection{Op-lax maps of monads and abstract linking theory}
To further understand the relation between $\GA$-algebras and simplicial $\GA$-algebras, it will be helpful to think of simplicial $\GA$-algebras as themselves algebras over the simplicial adjunction monad $\GA_*$. The natural map $\ga : |\GA_*| \rtarr \GA |-|$ then relates these two monads---along with the data of $|-|$, it is an instance of an \emph{op-lax map} (see Proposition \ref{prop:linking}). In this section, 
we develop an abstract theory of \emph{linking} to understand how a general op-lax map between two monads can link their algebras.\\

We first recall the definition of an op-lax map between monads.

\begin{defn}\label{def:oplax}
    Let $(\bC,\mu,\et)$ and $(\bD,\nu,\de)$ be monads on categories $\sV$ and $\sW$ respectively. We say that a pair $(F,\al)$ with a functor $F \colon \sV \rtarr \sW$ and natural transformation $\al \colon F \bC \rtarr \bD F$ is an op-lax map of monads, or simply an op-lax map, if the following commute for every $X$.
    \[
    \xymatrix{
        F \bC \bC X \ar[r]^{\al_{\bC X}} \ar[d]_{F\mu_X} & \bD F \bC X \ar[r]^{\bD \al_X} & \bD \bD F X \ar[d]^{\nu_{FX}} \\
        F \bC X \ar[rr]_{\al_X} && \bD F X
    } \qquad 
    \xymatrix{
        & FX \ar[dl]_{F\et_X} \ar[dr]^{\de_{FX}} &\\
        F \bC X \ar[rr]_{\al_X} && \bD F X
    }
    \]
    We will find it customary to write $(F,\al) \colon \bC \rtarr \bD$.
\end{defn}
The following is the central definition to our linking theory. 

\begin{defn}\label{def:link}
Let $(\bC,\mu,\et)$ and $(\bD,\nu,\de)$ be monads on $\sV$ and $\sW$. Let $(F,\al) \colon \bC \rtarr \bD$.
We say that algebras $(X,\theta) \in \bC[\sV]$ and $(Y,\phz) \in \bD[\sW]$ are $F$-linked by $\be \colon FX \rtarr Y$ (called an $F$-linking map) if the following diagram commutes.
\[
\xymatrix{
    F \bC X \ar[d]_{F\theta} \ar[r]^{\al_X} & \bD F X \ar[r]^{\bD \be} & \bD Y \ar[d]^{\phz}\\
    FX \ar[rr]_{\be} && Y
}
\]
\noindent When there is no ambiguity in the op-lax map $(F,\al)$, we will just say that $X$ and $Y$ are linked, and that they are linked by the
map $\be$. 
\end{defn}

\begin{exmp}\label{rem:Gid}
    Consider the case where $\sV = \sW$, $\bC = \bD$, and the op-lax map of monads is $(\id,\id)$ on $\bC$. See that two algebras $(X,\theta)$ and $(Y,\phz)$ in $\bC[\sV]$
    are linked by $\be \colon X \rtarr Y$ iff the following commutes.
    \[
    \xymatrix{
        \bC X \ar[d]_{\theta} \ar[r]^{\bC \be} & \bC Y \ar[d]^{\phz}\\
        X \ar[r]_{\be} & Y
    }
    \]
    \noindent That is, iff $\be$ is a map of $\bC$-algebras! This tells us that maps of $\bC$-algebras are already linking maps. And more precisely that in the simplest case, linking maps should just be algebra maps. It is in this sense that
    we see our linking maps as generalizing maps of algebras.
\end{exmp}

\begin{exmp}
    We illustrate an elementary example of the idea of linking. Every group homomorphism $\la \colon M \rtarr N$ induces an op-lax map of monads from $M \times -$ to $N \times -$ on the category of sets. 
    This op-lax map is given by $(\id,\la \times \id)$. An $\id$-linking map $\be \colon X \rtarr Y$ from an $M$-set $X$ to an $N$-set $Y$ is the same thing as a $\la$-equivariant map, i.e. $\be(mx) = \la(m)\be(x)$ for all $m \in M$ and $x \in X$.
\end{exmp}
We now compare our definition of linking with the definitions of \cite{KMZ25}.
\begin{rem}\label{rem:KMZ25}
    Consider an op-lax map $(F,\al)$ from $\bC$ to $\bD$. Suppose we have $(X,\theta) \in \bC[\sV]$, $(FX,\psi) \in \bD[\sW]$, and $(Y,\phz) \in \bD[\sW]$. A map $\be \colon FX \rtarr Y$ is a linking map iff the
    following commutes.
    \[
    \xymatrix{
        F \bC X \ar[d]_{F\theta} \ar[r]^{\al_X} & \bD FX \ar[r]^{\bD \be} & \bD Y \ar[d]^{\phz} \\
        FX \ar[rr]_{\be} && Y
    }
    \]
    Note that if $\beta$ is a map of $\bD$-algebras, and the following triangle commutes, then $\be$ is a linking map.
    \[
    \xymatrix{
        F \bC X \ar[r]^{\al_X} \ar[d]_{F\theta} & \bD F X \ar[dl]^{\psi} \\
        FX
    }
    \]
    \noindent Note that this triangle commutes iff $\id \colon FX \rtarr FX$ is a linking map linking the algebras $(X,\theta)$ and $(FX,\psi)$. Whenever this is the case, \cite[Definition 2.31]{KMZ25} calls $X$ and $FX$ linked. As a consequence of this remark, we obtain that $X$ and $FX$ are linked in the sense of \cite{KMZ25} iff
    $X$ and $FX$ are linked via $\id \colon FX \rtarr FX$ in the sense of Definition \ref{def:link}. Nonetheless, the more abstract definition provided in \cite[Definition 4.2]{KMZ25} is equivalent to ours. 
\end{rem}

The following visual explanation concretizes what it means for algebras to be linked in our sense. Let us fix an op-lax map $(F,\al) \colon \bC \rtarr \bD$, algebras $(X,\tha) \in \bC[\sV]$ and $(Y,\phz) \in \bD[\sW]$, and a linking map $\be \colon FX \rtarr Y$. Consider the unit diagrams for the 
algebras $(X,\theta)$ and $(Y,\phz)$.
\[
\xymatrix{
    X \ar[r]^{\et_X} \ar@{=}[dr] & \bC X \ar[d]^{\theta}\\
    & X
} \qquad
\xymatrix{
    Y \ar[r]^{\de_Y} \ar@{=}[dr] & \bD Y \ar[d]^{\phz}\\
    & Y
}
\]
\noindent Applying $F$ to the unit diagram for $X$, the map $\be \colon FX \rtarr Y$ links together the unit diagrams to produce the commutative diagram below.
\[
\xymatrix@-0.25pc{
    & & Y \ar[r]^-{\de_Y} \ar@{=}[dr] & \bD Y \ar[d]^{\phz}\\
    & & & Y \\
    FX \ar[uurr]^{\be} \ar[r]_-{F\et_X} \ar@{=}[dr] & F \bC X \ar[uurr]_{\bD \be \circ \al_X} \ar[d]^-{F\theta} & &\\
    & FX \ar[uurr]_{\be} & &
}
\]
\noindent To see that this prism commutes, it suffices to see that its top face commutes. We can write the top face as the composite:
\[
\xymatrix{
    FX \ar[d]_{F\et_X} \ar[rr]^{\be} \ar[dr]^{\de_{FX}} & & Y \ar[d]^{\de_Y} \\
    F \bC X \ar[r]_{\al_X} & \bD FX \ar[r]_{\bD \be} & \bD Y
}
\]
\noindent The triangle in this diagram commutes since it is the unit diagram of the op-lax map $(F,\al)$, and the remaining trapezoid is a naturality square for $\de$.\\

Similarly, we have the product diagrams for $X$ and $Y$:
\[
\xymatrix{
    \bC \bC X \ar[r]^{\bC \theta} \ar[d]_{\mu_X} & \bC X \ar[d]^{\theta}\\
    \bC X \ar[r]_{\theta} & X
}
\qquad
\xymatrix{
    \bD \bD Y \ar[r]^{\bD \phz} \ar[d]_{\nu_Y} & \bD Y \ar[d]^{\phz}\\
    \bD Y \ar[r]_{\phz} & Y
}
\]
\noindent Once more, applying $F$ to the product diagram for $X$, the map $\be \colon FX \rtarr Y$ links together the product diagrams to produce the commuting diagram below.
\[
\xymatrix{
    & & & & \bD \bD Y \ar[r]^{\bD \phz} \ar[d]_{\nu_Y} & \bD Y \ar[d]^{\phz}\\
    & & \bD \bD FX \ar[urr]^{\bD \bD \be} & & \bD Y \ar[r]_{\phz} & Y\\
    F \bC \bC X \ar[urr]^{\bD \al_X \circ \al_{\bC X} \; \; \; \;} \ar[r]_{F\bC \theta} \ar[d]_{F\mu_X} & F \bC X \ar[uurrrr]_{\; \; \bD \be \circ \al_X} \ar[d]^{F\theta} &&&&\\
    F \bC X \ar[r]_{F\theta} \ar[uurrrr]_{\; \; \bD \be \circ \al_X} & FX \ar[uurrrr]_{\beta} &&&&
}
\]
\noindent The front and back faces of this cuboid commute since they are the product diagrams for $X$ and $Y$. The right face and the bottom face also commute since $X$ and $Y$ are linked by $\be$ by assumption.
We can decompose the top face into the following commutative diagrams:
\[
\xymatrix{
    F \bC \bC X \ar[r]^{\al_{\bC X}} \ar[d]_{F\bC \theta} & \bD F \bC X \ar[r]^{\bD \al_X} \ar[d]^{\bD F \theta} & \bD \bD FX \ar[r]^{\bD \bD \be} & \bD \bD Y \ar[d]^{\bD \phz}\\
    F \bC X \ar[r]_{\al_X} & \bD FX \ar[rr]_{\bD \be} & & \bD Y.
}
\]
\noindent The left square commutes as a naturality square for $\al$, and the right square commutes since it is $\bD$ applied to the linking diagram for $\be$. Similarly the left face of 
the cuboid also decomposes as:
\[
\xymatrix{
    F \bC \bC X \ar[r]^{\al_{\bC X}} \ar[d]_{F\mu_X} & \bD F \bC X \ar[r]^{\bD \al_X} & \bD \bD FX \ar[d]_{\nu_{FX}} \ar[r]^{\bD \bD \be} & \bD \bD Y \ar[d]^{\nu_Y}\\
    F \bC X \ar[rr]_{\al_X} & & \bD FX \ar[r]_{\bD \be} & \bD Y.
}
\]
\noindent The left square commutes as the product diagram for the op-lax map $(F,\al)$, and the right square commutes since it is the naturality square for $\nu$.\\

In essence, what we have tried to motivate here is this: to say the algebras $(X,\theta)$ and $(Y,\phz)$ are linked, is to say in a precise diagrammatic sense that their 
algebra structures are linked together via the linking map $\be$.\\

Examples \ref{ex:2.8} provides a general simplification of the kind that one may have hoped for in the loop space story, and indicates how it would have allowed us to evade the linking notion. The precise failure of the relevant hypothesis in the loop space story is what necessitates a general notion beyond maps of algebras.
\begin{exmp}\label{ex:2.8}
    Consider again an op-lax map $(F,\al) \colon \bC \rtarr \bD$ and algebras $(X,\tha) \in \bC[\sV]$ and $(Y,\phz) \in \bD[\sW]$. Suppose $\al_X \colon F \bC X \cong \bD F X$ is an isomorphism. There is then a canonical $\bD$-algebra structure on $FX$ given by $\ps = F \tha \circ \al_X^{-1}$. This definition satisfies the triangle condition from Remark \ref{rem:KMZ25} manifestly. We infer that a map $\be \colon FX \rtarr Y$
    is a linking map iff it is a map of $\bD$-algebras.
\end{exmp}

In what follows, we examine a general situation that is closer to our loop space story, with the idea of a \emph{section} of an op-lax map. Remark \ref{ex:3.10} provides a broad class of linking maps that we will use to sharpen our monadicity result in Section \ref{subsec:sharpen}.

\begin{defn}
    Let $(F,\al) \colon \bC \rtarr \bD$ be an op-lax map of monads from $(\bC,\mu,\et)$ (on $\sV$) to $(\bD,\nu,\de)$ (on $\sW$). We say that $(F,\al)$ admits a section 
    if there exists an op-lax map $(G,\id) \colon \bD \rtarr \bC$ such that $FG = \id$ and $\al_{GY} = \id$ for every $Y \in \sW$. Such an op-lax map is called a section of $(F,\al)$.
\end{defn}
\begin{rem}\label{ex:3.10}
    Let $(F,\al) \colon \bC \rtarr \bD$ be an op-lax map with section $(G,\id)$. If $(Y,\phz)$ is a $\bD$-algebra, then $(GY,G \phz)$ is automatically a $\bC$-algebra. For any other $\bD$-algebra $(Y',\phz')$, a map $\be \colon Y \rtarr Y'$ is a linking map iff it is a map of $\bD$-algebras.
    \[
    \xymatrix{
        \bD Y \ar@{=}[r] \ar[d]_{\phz} & F \bC GY \ar[d]_{\phz} \ar@{=}[r]^{\al_{GY}} & \bD FGY \ar[r]^{\bD \be} & \bD Y' \ar[d]^{\phz'} \\
        Y \ar@{=}[r] & FGY \ar[rr]_{\be} && Y'
    }
    \]
    Now consider algebras $(X,\tha) \in \bC[\sV]$ and $(Y,\phz) \in \bD[\sW]$. We show that if $f \colon X \rtarr GY$ is a map of $\bC$-algebras, then $\be := F(f) \colon FX \rtarr Y$ is a linking map. Since $f$ is a map of $\bC$-algebras, the following commutes.
    \[
    \xymatrix{
        F \bC X \ar[r]^{F \bC f} \ar[d]_{F\tha} & \bD Y \ar[d]^{\phz}\\
        FX \ar[r]_{Ff} & Y
    }
    \]
    Moreover by the naturality of $\al$ (applied to the morphism $f \colon X \rtarr GY$), we have the following commutative diagram.
    \[
    \xymatrix@-1pc{
        F \bC X \ar[rr]^{F \bC f} \ar[dr]_{\al_X} & & \bD Y\\
        & \bD F X \ar[ur]_{\bD F f} &
    }
    \]
    \noindent We conclude that the following commutes and therefore that $\be$ is a linking map.
    \[
    \xymatrix{
        F \bC X \ar[r]^{\al_X} \ar[d]_{F\tha} & \bD F Y \ar[r]^{\bD \be} & \bD Y \ar[d]^{\phz}\\
        FX \ar[rr]_{\be} & & Y
    }
    \]
    \noindent If furthermore $\al_X$ is an isomorphism, then by Example \ref{ex:2.8} there is a canonical $\bD$-algebra structure on $FX$, and $\be$ is necessarily a map of $\bD$-algebras.
    In applications, where this reduction is unavailable, the best we can say is that for $f \colon X \rtarr GY$ a map of $\bC$-algebras, $Ff$ is a linking map. In the loop space story, we will take $G$ to be the constant simplicial object functor $c_* \colon \sT \rtarr s\sT$. 

\end{rem}

Let us continue on the story of an op-lax map $(F,\al)$ from $\bC$ to $\bD$ with a section $(G,\id)$. Let $(Y',\phz')$ and $(Y,\phz)$ be $\bD$-algebras. We are now interested in maps $Y' \rtarr Y$ 
that are not algebra maps, but still retain some monadic structure. One might hope that the notion of linking that we have discussed provides the appropriate generalization. However, we saw in Remark \ref{ex:3.10},
that the only linking maps between $\bD$-algebras are the $\bD$-algebra maps! It seems more appropriate then to look at maps \emph{over} an algebra that is linked to both $Y$ and $Y'$. In other words, those maps that factor as spans of linking maps: $Y' \leftarrow FX \rightarrow Y$.
\begin{defn}\label{def:linked}
    Let $(F,\al) \colon \bC \rtarr \bD$ be an op-lax map of monads with section $(G,\id)$. We say that a map 
    $f \colon Y' \rtarr Y$ is a \emph{linked map} if there exist linking maps $\rho \colon FX \rtarr Y'$ and 
    $\be \colon FX \rtarr Y$ such that $\be = f \circ \rho$.
    \[
    \xymatrix@-1pc{
        & FX \ar[dl]_-{\rho} \ar[dr]^{\be} &\\
        Y' \ar[rr]_{f} && Y
    }
    \]
\end{defn}


\subsection{Simplicial $\GA$-algebras and sharpened homotopical monadicity}\label{subsec:sharpen}
Our discussion so far on abstract linking theory is instantiated here to provide a sharpened version of the homotopical monadicity theorem.
From the adjunction $(\SI^{\infty},\OM^{\infty})$ we get an induced adjunction $(\SI^{\infty}_*,\OM^{\infty}_{*})$ between the categories of simplicial objects such that the simplicial unit $\et_*$ and counit $\epz_*$ are just levelwise applications of $\et$ and $\epz$.
The associated simplicial adjunction monad is given by $\GA_*$ on $\sT$, and consequently its product map $\mu_*$ is just a levelwise application of $\mu$. As remarked earlier, by \emph{simplicial} $\GA$\emph{-algebras} we will mean $\GA_*$-algebras. Once more, by the usual yoga of adjunction monads, 
$\OM^{\infty}_* E_*$ has a canonical simplicial $\GA$-algebra structure and therefore $\OM^{\infty}_*$ really lands in the category $\GA_*[s\sT]$. When we want to emphasize this fact, we use the notation $\OM^{\infty}_{*,\GA}$ for the functor landing in $\GA_*[s\sT]$. Throughout this section, we will also conflate $\ga$ with the natural map $\ga \colon |\OM^{\infty}_* \SI^{\infty}_*| \rtarr \OM^{\infty} |\SI^{\infty}_*| \cong \OM^{\infty} \SI^{\infty} |-|$.\\

The following proposition outlines the setting in which we will apply our linking theory.


\begin{prop}\label{prop:linking}
    $(|-|,\ga)$ is an op-lax map of monads $\GA_* \rtarr \GA$. That is to say, $|-| \colon s\sT \rtarr \sT$ is equipped with a natural transformation $\ga \colon |\GA_*| \rtarr \GA|-|$ such that the following commute.
    \[
    \xymatrix{
        |\GA_* \GA_*| \ar[d]_{|\mu_*|} \ar[r]^{\ga \GA_*} & \GA |\GA_*| \ar[r]^-{\GA \ga} & \GA \GA |-| \ar[d]^{\mu |-|} \\
        |\GA_*| \ar[rr]_{\ga} && \GA|-|
    }
    \qquad \qquad
    \xymatrix{
        & |-| \ar[dl]_{|\et_*|}  \ar[dr]^{\et|-|} &\\
        |\GA_*| \ar[rr]_{\ga} & & \GA|-|
    }
    \]
    Moreover for every $E_* \in s\sS$, the algebras $\OM^{\infty}_{*,\GA} E_*$ and $\OM^{\infty}_{\GA}|E_*|$ are $|-|$-linked by $\ga$, meaning the following diagram commutes.
    \[
    \xymatrix{
        |\GA_* \OM^{\infty}_* E_*| \ar[d]_{|\OM^{\infty}_*\epz_*|} \ar[r]^-{\ga \OM^{\infty}_*} & \GA |\OM^{\infty}_* E_*| \ar[r]^{\GA \ga} & \GA \OM^{\infty} |E_*| \ar[d]^{\OM^{\infty} \epz |-|} \\
        |\OM^{\infty}_* E_*| \ar[rr]_{\ga} && \OM^{\infty}|E_*|
    }
    \]
\end{prop}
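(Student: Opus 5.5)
The plan is to verify all three diagrams by passing to adjoints under $(\SI^{\infty},\OM^{\infty})$. Every map in sight whose target is $\OM^{\infty}$ of something is determined by its adjoint, a map of spectra out of $\SI^{\infty}$ of the source. The key input is \textbf{\ref{salist:SA4}}: $\SI^{\infty}|K_*| \cong |\SI^{\infty}_* K_*|$ naturally, and $|-|$ on $s\sS$ is a left adjoint (\textbf{\ref{salist:SA1}}). First record the defining property of $\ga$ for a simplicial spectrum $E_*$: the adjoint of $\ga_{E_*} \colon |\OM^{\infty}_* E_*| \rtarr \OM^{\infty}|E_*|$ is $|\epz_*| \circ \iota$, where $\iota \colon \SI^{\infty}|\OM^{\infty}_*E_*| \cong |\SI^{\infty}_*\OM^{\infty}_*E_*|$. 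Equivalently,
\[\epz_{|E_*|} \circ \SI^{\infty}\ga_{E_*} = |\epz_{E_*}| \circ \iota .\]
Naturality of $\ga$ in $E_*$ follows from naturality of $\iota$ and $\epz_*$. Specializing to $E_* = \SI^{\infty}_* K_*$ and composing with $\OM^{\infty}$ of the isomorphism from \textbf{\ref{salist:SA4}} gives the natural transformation $\ga \colon |\GA_*| \rtarr \GA|-|$.

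The unit triangle is checked by adjunction. The adjoint of $\et_{|K_*|}$ is the identity of $\SI^{\infty}|K_*|$. The adjoint of $\ga \circ |\et_*|$ is $|\epz_*| \circ \iota \circ \SI^{\infty}|\et_*|$. By naturality of $\iota$, this equals $|\epz_{\SI^{\infty}_*}\circ \SI^{\infty}_*\et_*| \circ \iota$. The triangle identity $\epz\SI^{\infty}\circ\SI^{\infty}\et = \id$, applied levelwise, reduces this to the identity modulo the identification of \textbf{\ref{salist:SA4}}.

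The second assertion of the proposition is the linking square. I would prove it directly for arbitrary $E_*$ and then deduce the multiplication square as the special case $E_* = \SI^{\infty}_* K_*$. In that case $\mu = \OM^{\infty}\epz\SI^{\infty}$ is exactly the $\GA$-action on $\OM^{\infty}\SI^{\infty}$, and $\mu_*$ is the levelwise action on $\OM^{\infty}_*\SI^{\infty}_*K_*$. For the linking square, take adjoints of both composites $|\GA_*\OM^{\infty}_*E_*| \rtarr \OM^{\infty}|E_*|$.

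\emph{Bottom-left route.} Using the defining property of $\ga$ and naturality of $\iota$, its adjoint is $|\epz_{E_*} \circ \SI^{\infty}_*\OM^{\infty}_*\epz_{E_*}| \circ \iota$.

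\emph{Top-right route.} Write $\OM^{\infty}\epz_{|E_*|}\circ \GA\ga \circ \ga$. Its adjoint is $\epz_{|E_*|}\circ \SI^{\infty}\OM^{\infty}\epz_{|E_*|}\circ \SI^{\infty}\GA\ga\circ\SI^{\infty}\ga$. By naturality of $\epz$ this equals $\epz_{|E_*|}\circ \SI^{\infty}\ga \circ \epz_{\SI^{\infty}|\OM^{\infty}_*E_*|}\circ\SI^{\infty}\ga$. Applying the defining property of $\ga$ twice, once at $E_*$ and once at $\SI^{\infty}_*\OM^{\infty}_*E_*$, converts it to $|\epz_{E_*}\circ\epz_{\SI^{\infty}_*\OM^{\infty}_*E_*}| \circ \iota$.

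Naturality of $\epz$ gives $\epz\circ\epz\SI^{\infty}\OM^{\infty} = \epz\circ \SI^{\infty}\OM^{\infty}\epz$ levelwise, so the two routes agree.

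The main obstacle is bookkeeping rather than a genuine difficulty. One must be careful that the isomorphism of \textbf{\ref{salist:SA4}} is compatible with the identifications used in the second application of the defining property of $\ga$. This means checking that $\iota$ is natural with respect to maps of simplicial spaces, including $\ga$-related maps that are not levelwise. I would handle this by defining $\iota$ as the canonical comparison for the left adjoint $\SI^{\infty}$ commuting with the colimit-type realization. Its naturality in $K_*$ is then automatic, and all remaining identities reduce to triangle identities and naturality of $\epz$.
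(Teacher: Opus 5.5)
Your proposal is correct and follows essentially the same route as the paper. Both arguments pass to adjoints, identify the unit triangle with the realization of the levelwise triangle identity, and check the linking square for arbitrary $E_*$ using the defining property of $\ga$ together with naturality of $\epz$. Both then obtain the multiplication square as the special case $E_* = \SI^{\infty}_* K_*$, which the paper summarizes as ``the remaining diagram follows automatically.'' Your concern about naturality of $\iota$ along non-levelwise maps is unnecessary, since your computation only uses naturality of $\iota$ along the simplicial maps $\et_*$ and $\OM^{\infty}_*\epz_*$.
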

\begin{proof}
    We prove the commutativity of the unit diagram for $(|-|,\ga)$ (the top right diagram), and the diagram for the linking of $\OM^{\infty}_*$ to $\OM^{\infty}$---the remaining diagram follows automatically.
    For the unit diagram, we first pass to the adjoint:
    \[
    \xymatrix{
        & |\SI^{\infty}_*| \ar[dl]_{|\SI^{\infty}_* \et_*|} \ar[dr]^{|\SI^{\infty}_*|} &\\
        |\SI^{\infty}_* \OM^{\infty}_* \SI^{\infty}_*| \ar[rr]_{|\epz_* \SI^{\infty}_*|} && |\SI^{\infty}_*|.
    }
    \]
    \noindent This diagram is just the realization of a triangle identity for the adjunction $(\SI^{\infty}_*,\OM^{\infty}_*)$ in $s\sT$, and therefore it commutes. As for the product diagram, passing to the adjoint again, we have the slightly larger diagram (where $\tilde{\ga} \colon \SI^{\infty}|\OM^{\infty}| \rtarr |-|$ denotes the adjoint map to $\ga$):
    \[
    \xymatrix{
        \SI^{\infty}|\GA_* \OM^{\infty}_*| \ar[r]^-{\cong} \ar[d]_{\SI^{\infty}|\OM^{\infty}_* \epz|} & |\SI^{\infty}_* \OM^{\infty}_* \SI^{\infty}_* \OM^{\infty}_*| \ar[r]^-{|\epz_*|} \ar[d]_{|\SI^{\infty}_* \OM^{\infty}_* \SI^{\infty}_*|} & |\SI^{\infty}_* \OM^{\infty}_*| \ar[d]_{|\epz_*|} \ar[r]^{\cong} & \SI^{\infty}|\OM^{\infty}_*| \ar[r]^-{\SI^{\infty}\ga} \ar[dl]^{\tilde{\ga}} \ar[dr]_{\tilde{\ga}} & \SI^{\infty} \OM^{\infty}|-| \ar[d]^{\epz|-|} \\
        \SI^{\infty}|\OM^{\infty}_*| \ar[r]_{\cong} & |\SI^{\infty}_* \OM^{\infty}_*| \ar[r]_{|\epz_*|} & |-| \ar[rr]_{=} && |-|
    }
    \]
    The left two squares commmute trivially. The left triangle in the rightmost square commutes by definition of $\ga$, and the right triangle commutes since $\ga$ and $\tilde{\ga}$ are adjoint maps related by the counit $\epz$. 
\end{proof}

As a consequence, we see that for each $Y \in \GA [\sT]$, $\ol{Y}$ and $\OM^{\infty}_{\GA} \bE Y$ are linked by $\ga \colon \ol{Y} \rtarr \OM^{\infty}_{\GA} \bE Y$. Now we can make a remark on \emph{what could have been}, using Example \ref{ex:2.8}. If for some simplicial $\GA$-algebra $X_*$ we had $\ga \colon |\GA_* X_*| \cong \GA |X_*|$, then by Example \ref{ex:2.8} the realization $|X_*|$ would have a canonical $\GA$-algebra structure, and linking maps out of $|X_*|$ would just be maps of $\GA$-algebras.
In particular, this is maybe what one would have hoped for $\OM^{\infty}_{*,\GA} \bE_* Y$ and $\ol{Y}$.

\begin{prop}
    For every $X \in \sT$, $c_* \GA X = \GA_* c_* X$. Consequently, the pair $(c_*,\id)$ defines a section of the op-lax map $(|-|,\ga)$.
\end{prop}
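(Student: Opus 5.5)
The first claim is a direct computation on simplices. For $X \in \sT$, the simplicial object $c_* \GA X$ has $\GA X$ in every simplicial degree, with all faces and degeneracies the identity. The simplicial object $\GA_* c_* X$ is obtained by applying $\GA$ levelwise to $c_* X$. Its $q$-simplices are therefore $\GA X$, and its faces and degeneracies are $\GA(\id) = \id$. So the two simplicial objects agree on the nose. The same levelwise reasoning shows that $c_*\mu = \mu_* c_*$ and $c_*\et = \et_* c_*$, since $\mu_*$ and $\et_*$ are levelwise applications of $\mu$ and $\et$. I will also record that $\SI^{\infty}_* c_* = c_* \SI^{\infty}$ and $\OM^{\infty}_* c_* = c_*\OM^{\infty}$, and hence $\epz_* c_* = c_*\epz$, which is needed below.

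Next I check that $(c_*,\id) \colon \GA \rtarr \GA_*$ is an op-lax map in the sense of Definition \ref{def:oplax}. With $\al = \id$, the product diagram asks that $c_*\mu_X = \mu_{*,c_*X}$ as maps $c_*\GA\GA X \rtarr c_*\GA X = \GA_* c_* X$. The unit diagram asks that $c_*\et_X = \et_{*,c_*X}$. Both were noted above. In fact $(c_*,\id)$ is a strict map of monads.

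It remains to verify the two section conditions. The first is $|c_*-| = \id$, which is part of \textbf{\ref{salist:SA1}}. The second is $\ga_{c_*X} = \id$, and this is the only step with content. Here $\ga_{c_*X} \colon |\GA_* c_*X| = \GA X \rtarr \GA|c_*X| = \GA X$ is the adjoint of the composite
\[\SI^{\infty}|\OM^{\infty}_*\SI^{\infty}_* c_*X| \cong |\SI^{\infty}_*\OM^{\infty}_*\SI^{\infty}_* c_*X| \xrightarrow{|\epz_*|} |\SI^{\infty}_* c_* X|.\]
Using the identifications of the first paragraph, this becomes $\SI^{\infty}\GA X \cong |c_*\SI^{\infty}\GA X| \xrightarrow{|c_*\epz|} |c_*\SI^{\infty}X| = \SI^{\infty}X$.

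The main point is that the isomorphism of \textbf{\ref{salist:SA4}}, evaluated on a constant simplicial object, is compatible with the identifications $|c_*-| = \id$. I expect to get this from naturality: the comparison $\SI^{\infty}|-| \rtarr |\SI^{\infty}_*-|$ is the canonical map coming from $\SI^{\infty}$ preserving the colimits that define realization, and on constant objects that colimit is trivial. Granting this compatibility, the composite is just $\epz_{\SI^{\infty}X}$. Its adjoint is $\OM^{\infty}\epz_{\SI^{\infty}X} \circ \et_{\GA X} = \id_{\GA X}$ by the triangle identity, so $\ga_{c_*X} = \id$ and $(c_*,\id)$ is a section of $(|-|,\ga)$.
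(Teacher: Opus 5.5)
Your proposal is correct and follows essentially the same route as the paper: the levelwise identifications give $c_*\GA = \GA_* c_*$ and the op-lax axioms for $(c_*,\id)$, \textbf{\ref{salist:SA1}} gives $|c_*-| = \id$, and $\ga_{c_*X}$ is the adjoint of $\epz_{\SI^{\infty}X}$, hence equal to $\id_{\GA X}$ by the triangle identity. The one place you are more careful than the paper is in flagging that the \textbf{\ref{salist:SA4}} isomorphism must agree with the identifications $|c_*-|=\id$ on constant objects, which the paper absorbs into ``by definition''.
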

\begin{proof}
The first statement and the fact that $(c_*,\id)$ provides an op-lax map $\GA \rtarr \GA_*$ both follow from the levelwise definitions of $\GA_*$, $\mu_*$, and $\et_*$. By \textbf{\ref{salist:SA1}}, for every $X \in \sT$, $|c_* X| \cong X$. It remains to note that $\ga_{c_* X} \colon |\GA_* c_* X| = |c_* \GA X| \rtarr \GA X$ is the identity map $\id_{\GA X}$. $\ga_{c_* X}$ was defined to be the adjoint to the map $|(\epz_*)_{c_* \SI^{\infty}X}|$. By definition again, $|(\epz_*)_{c_* \SI^{\infty}X}| = \epz_{\SI^{\infty}X}$. However $\epz_{\SI^{\infty} X}$
was defined to be the adjoint to $\id_{\GA X}$, forcing $\ga_{c_* X} = \id_{\GA X}$. 
\end{proof}

Remark \ref{ex:3.10} has several important consequences in light of the previous proposition. First, as was already intuitively clear, if $X$ is a $\GA$-algebra, then there is a canonical $\GA_*$-algebra structure on $c_* X$ given by $\GA_* c_* X = c_* \GA X \xrightarrow{c_* \tha} c_* X$. Moreover, linking maps between $\GA$-algebras, $X \rtarr X'$ (seen as maps $|c_* X| \rtarr X'$) are exactly the same as maps of $\GA$-algebras. 
Now suppose for $X_* \in \GA_* [\sT]$ and $X' \in \GA [\sT]$, we had a map of $\GA_*$-algebras $f_* \colon X_* \rtarr c_* X'$. If it had been the case that $\ga \colon |\GA_* X_*| \cong \GA |X_*|$, then $|f_*|$ would have been a $\GA$-algebra map. Nonetheless, Remark \ref{ex:3.10} shows that $|f_*| \colon |X_*| \rtarr X'$ is always a linking map. In particular, $\ze := |\ze_*|$ is a linking map! In our homotopical setting, Definition \ref{def:linked} specializes to provide the notion of \emph{linked weak equivalences}. These are the linked maps that are also weak equivalences.\\

What we have put together over this section and the last is the second main result of this paper, a sharpening of Theorem \ref{thm:0.1}.

\maintheoreM*

The consequence for the homotopy categories of $\GA$-algebras and connective spectra that one would hope for, is provided by a direct corollary of this sharpened result.

\maincor*

\section{Verification of the simplicial axioms}\label{sec:simpob}
\subsection{The easy axioms}\label{subsec:simpobez}
In this section we prove the simplicial axioms \textbf{\ref{salist:SA1}-\ref{salist:SA5}} from Section \ref{sec:prelim}. We have denoted by $s\aC$ the category of simplicial objects in a category $\aC$. In particular, $s\sT = \ssSh((\mathrm{Sm}_S)_{Nis})$, the category of bisimplicial sheaves on $\mathrm{Sm}_S$. In order to prescribe realization functors $|-| \colon s\sT \rtarr \sT$ and $|-| \colon s\sS \rtarr \sS$, we must first record our choice of the cosimplicial objects in $\sT$ and $\sS$. There is a conventional choice in the case of $\sT$. For each $\bn \in \DE$, let $\DE_n$ denote the object in $\sT$ representing $S \times_{\Spec \bZ} \Spec \bZ[x_0, \dots, x_n] / \left(\sum_i x_i = 1\right)$. Given $f \colon \bn \rtarr \bm$ in $\DE$, there is a map $\DE_f \colon \DE_n \rtarr \DE_m$ (considered as schemes) given by the ring homomorphism $\DE_f(x_i) = \sum_{j \in f^{-1}(i)}x_j$ if $f^{-1}(i) \neq \emptyset$ and $0$ otherwise. Together, this defines a cosimplicial object in $\sT$ such that each $\DE_n$ is isomorphic to $\bA^n_S$. The typical prescription, noting that $\sT$ is tensored over itself, then leads us to define $|K_*| = K_* \otimes_{\DE} \DE_*$ for $K_* \in s\sT$. The realization functor here however takes a much simpler form due to the bisimplicial nature of simplicial spaces. For $K_* \in s\sT$, thought of a bisimplicial sheaf $K_{*,*}$, we have that $|K_*|$ is isomorphic to the diagonal simplicial sheaf $d(K)_n = K_{n,n}$. By the usual yoga, since $\sT$ is cotensored over itself, $|-| \colon s\sT \rtarr \sT$ is a left adjoint functor with right adjoint $\bS \colon \sT \rtarr s\sT$ given by $\bS(X)_q = F(\DE_q,X)$ where $F(-,-)$ is the cotensor functor $\sT^{\mathrm{op}} \times \sT \rtarr \sT$. Since $\sS$ is tensored and cotensored over $\sT$, our choice of the cosimplicial objects in $\sT$ yields the realization functor $|-| \colon s\sS \rtarr \sS$ given by $|E_*| = E_* \otimes_{\DE} \DE_*$ for simplicial spectra and its right adjoint (which we also denote $\bS \colon \sS \rtarr s\sS$) given by $\bS(E)_q = F(\DE_q,E)$. As in the case of spaces, $F$ is used to denote the cotensor functor $\sT^{op} \times \sS \rtarr \sS$.
This proves \textbf{\ref{salist:SA1}}.\\

The following is \textbf{\ref{salist:SA2}}, where the notion of a simplicial homotopy is that of \cite[Definitions 5.1]{may67simpob}. 
\begin{prop}\label{prop:3.1}
If $h_*$ is a homotopy between simplicial maps $f_*, g_* \colon K_* \rtarr L_*$ in $s\sT$, then $|h_*|$ determines a homotopy between maps $|f_*|, |g_*| \colon |K_*| \rtarr |L_*|$ in $\sT$. The corresponding statement for $\sS$ also holds true.
\end{prop}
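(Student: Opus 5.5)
The plan is to follow May's treatment of simplicial homotopies in \cite[Definitions 5.1]{may67simpob}. There a homotopy $h_*$ from $f_*$ to $g_*$ is equivalently a simplicial map $h_* \colon K_* \otimes \DE[1]_* \rtarr L_*$. Here $K_* \otimes \DE[1]_*$ is the simplicial object with $q$-simplices $\coprod_{\DE[1]_q} K_q$ (a wedge in the based setting), and $f_*$, $g_*$ are the restrictions along the two vertex inclusions $\DE[0] \rtarr \DE[1]$. Equivalently, $h_*$ is given by maps $h_i \colon K_q \rtarr L_{q+1}$ satisfying the usual face and degeneracy identities. I will use the tensored formulation, since it interacts best with realization as a left adjoint.

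The key step is a natural isomorphism
\[|K_* \otimes \DE[1]_*| \cong |K_*| \otimes |\DE[1]_*|,\]
where $|\DE[1]_*|$ is the realization in $\sT$ of the constant-in-$\sT$ simplicial set $\DE[1]$, taken with respect to the cosimplicial object $\DE_*$. For spaces this is easiest with the diagonal description from \textbf{\ref{salist:SA1}}. The bisimplicial sheaf $K_* \otimes \DE[1]_*$ has $(p,q)$ term $\coprod_{\DE[1]_p} K_{p,q}$, so its diagonal is $d(K) \times \DE[1]$ levelwise. This is exactly the simplicial cylinder $|K_*| \wedge \DE[1]_+$ on the simplicial sheaf $|K_*|$. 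Applying $|-|$ to $h_*$ therefore gives a map $|K_*| \wedge \DE[1]_+ \rtarr |L_*|$. Restricting along the two endpoints recovers $|f_*|$ and $|g_*|$, because realization is functorial and commutes with the vertex inclusions. This is a simplicial homotopy in $\sT$, and hence also an $\bA^1$-homotopy in the sense relevant to the model structure, since the cylinder $\DE[1]_+$ can be replaced by $\bA^1_+ \cong (\DE_1)_+$ via the cosimplicial structure.

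If a homotopy parametrized by $\bA^1$ is preferred, I would instead compute via coends:
\[|K_* \otimes \DE[1]_*| = \int^{\bn} (K_n \wedge \DE[1]_{n,+}) \wedge \DE_n,\]
and use the map $\DE[1] \otimes_{\DE} \DE_* \rtarr \DE_1 = \bA^1$ together with Fubini for coends. For spectra the same coend argument works verbatim, because $\sS$ is tensored over $\sT$ and $|E_*| = E_* \otimes_{\DE} \DE_*$. The tensor $E \wedge X$ with a space commutes with coends, since smashing with a space is a left adjoint: its right adjoint is the cotensor $F(X,-)$. This gives $|E_* \otimes \DE[1]_*| \cong |E_*| \wedge |\DE[1]|_+$, and the argument concludes as before. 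Under Hu's definition of $\bA^1$-homotopy of spectra, one then composes with $|\DE[1]| \rtarr \bA^1$ or its inverse.

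I expect the main obstacle to be identifying $|\DE[1]_*|$, the realization of the simplicial set $\DE[1]$ against the algebraic cosimplicial object $\DE_*$, with $\bA^1$ up to the right kind of equivalence. It need not be isomorphic on the nose, since the coend glues copies of $\bA^n$ along faces. What I would try first is to avoid this issue by stating the conclusion as a homotopy with respect to the cylinder $|K_*| \otimes |\DE[1]_*|$. The endpoint inclusions of $|\DE[1]_*|$ both become $\bA^1$-weak equivalences, by contracting to a vertex via the $\bA^1$-contractibility of each $\DE_n \cong \bA^n$. This makes $|K_*| \otimes |\DE[1]_*|$ a good cylinder object, and so $|f_*|$ and $|g_*|$ agree in $\oH(\sT)$ (respectively $\oH(\sS)$), which is what the later applications use.
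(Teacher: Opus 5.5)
You have the paper's first step right. May's reformulation turns $h_*$ into a map $H_*$ out of the cylinder $K_*\otimes\DE[1]$ (the paper's $\DE[1]\times K_*$), and everything then hinges on identifying the realized cylinder. That identification is where the gap is, and you have the difficulty backwards.

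For the realization the paper defines, $|K_*|=K_*\otimes_{\DE}\DE_*$, the co-Yoneda lemma gives $|\DE[1]|=\int^{n}\DE[1]_n\cdot\DE_n\cong\DE_1=\bA^1$ on the nose. So this is not an obstacle.

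What fails is your ``key step'' $|K_*\otimes\DE[1]|\cong|K_*|\otimes|\DE[1]|$. Since $-\times\bA^1$ preserves colimits, Fubini only yields $|K_*|\times\bA^1\cong\int^{n}K_n\times\DE_n\times\DE_1$. On the other hand, $|K_*\otimes\DE[1]|=\int^{n}K_n\times\DE[1]_n\times\DE_n$. For $K_*=\DE[p]$, matching the two is the claim $|\DE[p]\times\DE[1]|\cong\DE_p\times\DE_1$, i.e. triangulating prisms. This is false algebraically: $|\DE[1]\times\DE[1]|$ is two copies of $\DE_2\cong\bA^2$ glued along a line, not $\bA^2$.

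Your diagonal computation is correct for the diagonal, but in that model $|\DE[1]|$ is the constant simplicial sheaf $\DE[1]$, not $\bA^1$. The diagonal and the $\DE_*$-coend agree only up to weak equivalence, as $K_*=\DE[1]$ already shows. Also, ``replacing $\DE[1]_+$ by $\bA^1_+$'' has no on-the-nose meaning. $\bA^1$ is connected and simplicially discrete, so every map between it and $\DE[1]$, in either direction, factors through a point. So neither model supplies both ingredients of a map $\bA^1\times|K_*|\rtarr|L_*|$.

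Your fallback has the same defect: you have no map $|K_*|\otimes|\DE[1]|\rtarr|L_*|$, since the natural comparison runs the other way. Your spectrum argument rests on the same false isomorphism. The paper's proof uses the same two identifications (finite-product preservation of $|-|$ and $|\DE[1]|\cong\bA^1$), so the caveat applies to its on-the-nose $\bA^1$-homotopy as well.

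What does go through is the homotopy-category statement, which is all the later uses need (Lemma \ref{lem:4.1}, contractibility of $|P_*X_*|$). Use $|K_*\otimes\DE[1]|$ itself as the cylinder.
\begin{itemize}
\item The endpoint inclusions $|i_0|,|i_1|$ are both sections of $|\pi|\colon|K_*\otimes\DE[1]|\rtarr|K_*|$, and composing them with $|H_*|$ gives $|f_*|$ and $|g_*|$.
\item Levelwise, $\pi$ is a fold map, not a weak equivalence, so \textbf{\ref{salist:SA3}} does not apply. Instead factor $|\pi|$ as $|K_*\otimes\DE[1]|\rtarr|K_*|\times\bA^1\rtarr|K_*|$. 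The first map is the comparison that siftedness of $\DE^{\op}$ makes a weak equivalence (as used in Lemma \ref{lem:siftedprojection}); the second is an $\bA^1$-weak equivalence.
\item Hence $|i_0|=|\pi|^{-1}=|i_1|$ in $\oH(\sT)$, so $|f_*|=|g_*|$ there.
\end{itemize}
The spectrum case is the same argument, using homotopy cofinality of the diagonal $\DE^{\op}\rtarr\DE^{\op}\times\DE^{\op}$ to get $|E_*\otimes\DE[1]|\simeq|E_*|\wedge\bA^1_+$.

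Alternatively, if you take the diagonal as the definition of $|-|$, your computation already gives an honest simplicial homotopy. Its cylinder $d(K)\times\DE[1]$ is a good cylinder object in the $\bA^1$-model structure, so stop there rather than trying to pass to $\bA^1_+$.
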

\begin{proof}
It is important to first note that $|-|$ preserves finite products since $\sT$ (likewise $\sS$) is cartesian closed and since $|-|$ preserves products of representables. The proof of this fact is purely formal, starting from the co-Yoneda lemma. By $\DE[1]$, let us denote the standard simplicial set $\bn \mapsto \Hom_{\DE}(\bn,\mathbf{1})$, considered a simplicial space using the constant scheme construction of \cite[Exposé I, 1.8]{SGA3T1} levelwise. In the case of $\sT$, by \cite[Proposition 6.2]{may67simpob} there exists $H_* \colon \DE[1] \times K_* \rtarr L_*$ defined as follows. Working over $U \in \mathrm{Sm}_S$, we let $x \in K_*(U)$ and define $H_*((0),x)=g_*(x)$, $H_*((1),x) = f_*(x)$ (here $(0)$ denotes any simplex of $\pa_1\DE[1]$ and $(1)$ any simplex of $\pa_0\DE[1]$), and $H_q(s_{q-1} \cdots s_{i-1} s_{i+1} \cdots s_0 \mathbf{1},x) = \pa_{i+1}h_i(x)$ for each $0 \leq i \leq q-1$. $H_*$ constructed in this way is not only a map of simplicial sets but also one of simplicial spaces since $h_i$ and $\pa_{i+1}$ are morphisms in $\sT$. Finally, it remains to note that $|\DE[1]|$ is isomorphic to $\bA^1$, so that the composite $\bA^1 \times |K_*| \xrightarrow{\sim} |\DE[1]| \times |K_*| \xrightarrow{\sim} |\DE[1] \times K_*| \xrightarrow{|H_*|} |L_*|$ provides the required homotopy. The story for spectra is similar.\footnote{The proof here is almost identical to that of \cite[Corollary 11.10]{May72}, and might be formalized to a more general context--for example to a cartesian closed concrete category with a notion of homotopy.}
\end{proof}

Since $\SI^{\infty}$ is a left adjoint functor, it commutes with the colimits that build $|-| \colon s\sT \rtarr \sT$. Thanks to this observation, we obtain \textbf{\ref{salist:SA4}}.

\begin{prop}\label{prop:3.2}
There exists a natural isomorphism $\SI^{\infty}|K_*| \iso |\SI^{\infty}_*K_*|$ for each $K_* \in s\sT$.
\end{prop}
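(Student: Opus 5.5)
The plan is to write both realizations as coends and use that $\SI^{\infty}$ is a left adjoint compatible with the tensoring over $\sT$. By \textbf{\ref{salist:SA1}}, $|K_*| = K_* \otimes_{\DE} \DE_*$. This is the coequalizer
\[
\coprod_{f \colon \bn \rtarr \bm} K_m \sma (\DE_n)_+ \rightrightarrows \coprod_{\bn} K_n \sma (\DE_n)_+ \rtarr |K_*|
\]
in $\sT$. Here we use based spaces, so the cosimplicial object $\DE_*$ enters through $(\DE_n)_+$. Similarly, $|\SI^{\infty}_* K_*| = \SI^{\infty}_* K_* \otimes_{\DE} \DE_*$ is the analogous coequalizer in $\sS$, built from the tensors $\SI^{\infty}K_n \sma (\DE_n)_+$.

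The first step is to produce a natural isomorphism $\SI^{\infty}(X \sma A) \cong \SI^{\infty}X \sma A$ for $X, A \in \sT$, compatible in both variables. I will get it by adjunction. For any spectrum $E$, maps $\SI^{\infty}X \sma A \rtarr E$ correspond to maps $\SI^{\infty}X \rtarr F(A,E)$, since $\sS$ is tensored and cotensored over $\sT$. These in turn correspond to maps $X \rtarr F(A,E)_{\oU} = \underline{\Hom}_{\sT}(A,E_{\oU})$, using the levelwise formula for $F(A,E)$. Those correspond to maps $X \sma A \rtarr E_{\oU}$, and finally to maps $\SI^{\infty}(X \sma A) \rtarr E$. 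Each bijection is natural, so Yoneda gives the isomorphism, natural in $X$ and $A$.

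Next I apply $\SI^{\infty}$ to the coequalizer presentation of $|K_*|$. As a left adjoint, $\SI^{\infty}$ preserves the coproducts and the coequalizer. Combined with the first step, this turns the diagram term by term into the coequalizer diagram defining $|\SI^{\infty}_* K_*|$. The isomorphism is natural in $K_*$ because each identification is natural.

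The only real obstacle is bookkeeping. I must check that the two parallel arrows correspond under the isomorphism $\SI^{\infty}(X \sma A) \cong \SI^{\infty}X \sma A$. One arrow uses the simplicial structure maps $K_f$; the other uses the cosimplicial maps $\DE_f$. So this reduces to naturality of that isomorphism separately in $X$ and in $A$, which the Yoneda construction provides. I would also make sure the spectrum-level tensor used in \textbf{\ref{salist:SA1}} is the same spacewise-smash-then-spectrify construction $X \sma E$ described in Section~\ref{sec:coordfree}, so that the adjunction with $F(A,-)$ invoked above is the correct one.
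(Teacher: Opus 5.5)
Your proposal is correct and follows the same route as the paper, which only observes that $\SI^{\infty}$, being a left adjoint, commutes with the colimits that build the realization. Your adjunction argument for a natural isomorphism $\SI^{\infty}(X \sma A) \cong \SI^{\infty}X \sma A$ in both variables supplies the compatibility with tensoring by $(\DE_n)_+$, a step the paper leaves implicit.
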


We now proceed toward \textbf{\ref{salist:SA3}}. Recall that a simplicial object $X_* \in s\sV$ for a category $\sV$ is called \emph{Reedy cofibrant} if the latching maps $LX_q = \colim_{\phi \colon \mathbf{q+1} \epito \mathbf{s}, \phi \neq \id} X_s \monoto X_{q+1}$ are cofibrations for each $q$. $LX_q$ here is the scheme-theoretic analog to the union of all degenerate $q$-simplices in the simplicial set context. As in \cite{May09Einfty} we implicitly assume that our spaces are nondegenerately based (i.e. that for each based space $X$, the basepoint inclusion $S \rtarr X$ is a cofibration). So in this case every simplicial space is Reedy cofibrant.
\textbf{\ref{salist:SA3}} is derived from a general model-theoretic fact first due to \cite{reedy73}. The presentation here is inspired from \cite{may74perm}. 

\begin{prop}
$|-|$ preserves weak equivalences between Reedy cofibrant objects in $s\sT$ (resp. in $s\sS$).
\end{prop}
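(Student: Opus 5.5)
The plan is to run the classical Reedy argument: realization is built as an iterated pushout along latching maps, and each stage of the pushout is a homotopy pushout when the object is Reedy cofibrant. Write $|X_*| = \colim_n F_n|X_*|$, where $F_n|X_*|$ is the realization of the $n$-skeleton. There is a pushout square
\[
\xymatrix{
(LX_{n-1} \otimes \DE_n) \cup (X_n \otimes \pa\DE_n) \ar[r] \ar[d] & X_n \otimes \DE_n \ar[d]\\
F_{n-1}|X_*| \ar[r] & F_n|X_*|,
}
\]
where $\pa\DE_n$ is the boundary of the cosimplicial object, i.e.\ the latching object of $\DE_*$ at level $n$. The left vertical map is built from the latching map $LX_{n-1} \monoto X_n$ by a pushout-product with $\pa\DE_n \monoto \DE_n$. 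All of this works equally in $s\sT$ via tensors over $\sT$ and in $s\sS$ via tensors of $\sS$ over $\sT$.

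The first step is to check that the top horizontal map is a cofibration whenever $X_*$ is Reedy cofibrant. In $\sT$ cofibrations are monomorphisms, so the pushout-product of two monomorphisms of simplicial sheaves is again a monomorphism; this is a sectionwise check reducing to simplicial sets. For $\sS$ we use that the stable model structure is monoidal over $\sT$, i.e.\ $\sS$ is a $\sT$-model category in Hu's setting, so the pushout-product axiom applies.

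The second step is induction on $n$. Given $f_* \colon X_* \rtarr Y_*$, a levelwise weak equivalence between Reedy cofibrant objects, we show $F_n|f_*|$ is a weak equivalence. The base case is $F_0|X_*| = X_0$. For the inductive step we need three maps to be weak equivalences: the map $X_n \otimes \DE_n \rtarr Y_n \otimes \DE_n$, which is $X_n \times \bA^n$ and hence $\bA^1$-equivalent to $X_n$; the map on the corners, by an inner induction, since $LX_{n-1}$ is itself a colimit over a Reedy-cofibrant punctured cube and the map on latching objects $LX_{n-1}\rtarr LY_{n-1}$ is a weak equivalence by Reedy's lemma applied inductively; and the map on $F_{n-1}$, by the inductive hypothesis. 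The gluing lemma, valid in left proper model categories (both $\sT$ and $\sS$ are left proper as Bousfield localizations of left proper structures), then gives that $F_n|f_*|$ is a weak equivalence.

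The final step passes to the colimit. The maps $F_{n-1}|X_*|\rtarr F_n|X_*|$ are cofibrations, so the sequential colimit is a homotopy colimit, and a colimit of a levelwise weak equivalence between such sequences is a weak equivalence. For $\sS$ this is exactly \cite[Proposition 6.10]{Hu03}; for $\sT$ it follows from compactness of schemes (Proposition \ref{prop:1.2}) applied to the homotopy sheaves in Theorem \ref{thm:1.1}.

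I expect the main obstacle to be the latching-object induction, namely showing that $LX_{n-1}\rtarr LY_{n-1}$ and the corner maps are weak equivalences. I would handle it exactly as in Reedy's original argument, filtering $LX_{n-1}$ by the number of degeneracies and using the gluing lemma at each stage.
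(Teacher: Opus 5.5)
Your proposal is correct and follows essentially the same route as the paper: the skeletal filtration $F_q|X_*|$ built by pushouts along the latching pushout-product, a gluing lemma at each stage, and an inner induction over the degeneracy filtration $L^kX_q$ of the latching objects. The differences are minor. You use the model-categorical gluing (cube) lemma for weak equivalences, where the paper invokes a motivic Brown gluing theorem phrased for homotopy equivalences. You also treat explicitly the passage to the colimit of the $F_q|X_*|$, which the paper leaves implicit.
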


We define weak equivalences in $s\sT$ and $s\sS$ as those maps that are level-wise weak equivalences (in the appropriate model category)\footnote{In a sense, this result records the extent to which $|-|$ induces an equivalence between the homotopy categories of $s\sT$ and $\sT$ (resp. $s\sS$ and $\sS$).}. The proof of the above theorem relies on the motivic analog of Brown's gluing theorem \cite[Theorem 7.5.7]{Brown06topgrp}.

\begin{thm}\label{thm:3.1}
Suppose given a commutative diagram in $\sT$:
\[\xymatrix@-0.5pc{
& X \ar[rr]^g \ar '[d]^j [dd]^<f & & Z \ar[dd]^{\overline{j}}\\
A \ar[dd]_{i} \ar[rr] \ar[ur]^{\alpha} &  & C \ar[dd] \ar[ur]_{\beta} &\\
& Y \ar '[r]_{\overline{g}} [rr]^<{\overline{i}} & & W\\
B \ar[ur]^{\ga} \ar[rr]_{\overline{f}} & & D \ar[ur]_{\delta} &
}\]
\noindent that $i$ and $j$ are cofibrations, and that the front and back squares are pushouts. If $\alpha$, $\beta$, and $\gamma$ are homotopy equivalences, then so is $\delta$.
\end{thm}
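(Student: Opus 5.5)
The plan is to follow Brown's proof of the gluing theorem \cite[7.5.7]{Brown06topgrp}, replacing the topological ingredients (homotopy extension property, cylinders $I\times X$) by their motivic counterparts. Here cofibrations are monomorphisms, and homotopies are $\bA^1$-homotopies $\bA^1\times X\rtarr Y$. One caveat is that $\bA^1$-homotopy is not an equivalence relation on the nose, so ``homotopy equivalence'' should be read with respect to the equivalence relation it generates: chains of elementary $\bA^1$-homotopies, or equivalently simplicial homotopies in the sense of Proposition \ref{prop:3.1}. The only property of a cofibration $i\colon A\monoto B$ that the argument should use is a homotopy extension property with respect to this cylinder. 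Concretely, the map
\[
(\bA^1\times A)\cup_{\io_0 A} B \rtarr \bA^1\times B
\]
should be a retract up to homotopy. So the first step is to establish this motivic HEP for monomorphisms in $\sT$. Since $\io_0\colon S\rtarr\bA^1$ is a monomorphism and an $\bA^1$-weak equivalence, the pushout-product of $i$ with $\io_0$ is an acyclic cofibration. Consequently any map out of it into a fibrant target extends. This shows that the HEP holds at least up to passing to fibrant replacements, which is the form in which it will be used.

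Second, with the HEP in hand, reduce to the case where $\alpha,\beta,\gamma$ are cofibrations, by the standard mapping-cylinder trick. Factor the back and front cubes through mapping cylinders built from $\bA^1\times(-)$. Each homotopy equivalence factors as a cofibration followed by a deformation retraction, and pushouts of these cylinder diagrams are again of the same form. After this replacement the problem becomes showing that if $A\rtarr X$, $B\rtarr Y$ and $C\rtarr Z$ are cofibrations that are homotopy equivalences, then $D\rtarr W$ is one too.

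Third, run Brown's core argument in this reduced case. The homotopy inverse and the two homotopies are built in stages. First construct them on $X$ as a strong deformation retraction relative to $A$, which uses the HEP for $A\monoto X$. Then extend to $Y$ compatibly over $X$, using the HEP for $j$ and the fact that $B\cup_A X\rtarr Y$ is again a cofibration that is a homotopy equivalence. Finally extend to $Z$. These pieces glue along the pushouts, because $\bA^1\times(-)$ preserves pushouts in $\sT$, this category being cartesian closed, so $\bA^1\times W$ is the pushout of $\bA^1\times Y\leftarrow\bA^1\times X\rtarr\bA^1\times Z$.

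The main obstacle is the first step together with its use in the third. Motivic HEP genuinely holds only for maps into $\bA^1$-fibrant objects, and chaining elementary $\bA^1$-homotopies breaks strict relative deformations. If the strict version fails, I would fall back to the model-categorical statement instead, namely the cube lemma for left proper model categories. Pushouts along cofibrations of weak equivalences between cofibrant objects are weak equivalences, and every object of $\sT$ is cofibrant. In that form the theorem would conclude that $\delta$ is a weak equivalence, which suffices for \textbf{\ref{salist:SA3}}.
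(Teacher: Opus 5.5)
\paragraph{Where Plan A breaks.} The paper gives no argument for this statement beyond calling it the motivic analog of \cite[Theorem 7.5.7]{Brown06topgrp}, so your Plan A is what that citation implicitly suggests. It fails exactly at the step you flag.

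Monomorphisms do not have the homotopy extension property for $\mathbb{A}^1\times(-)$. For $\{0,1\}\hookrightarrow\mathbb{A}^1$ there is no retraction of $\mathbb{A}^1\times\mathbb{A}^1$ onto $(\{0\}\times\mathbb{A}^1)\cup(\mathbb{A}^1\times\{0,1\})$, because a map out of the irreducible $\mathbb{A}^2$ into this union of three lines lands in a single line. Your pushout-product argument only gives extensions into fibrant targets. Brown's argument, however, upgrades $\gamma$ to a homotopy equivalence under $A$ and then glues inverses and homotopies over $W$. That requires extending into $B$, $Y$ and $W$ themselves. After fibrant replacement your inverse lands in a fibrant replacement of $W$, not in $W$, so you only learn that $\delta$ is a weak equivalence.

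\paragraph{The literal statement is false.} This cannot be patched: if ``homotopy equivalence'' means chains of $\mathbb{A}^1$-homotopies (the sense in which the paper calls $\zeta$ one), there is a counterexample. Take the following data, adding disjoint basepoints to land in $\sT$:
\begin{enumerate}
\item $B=L_1\cup_p L_2$, two copies of $\mathbb{A}^1$ glued by $1\in L_1\sim 0\in L_2$;
\item $A=X=\{a_0,a_1\}$, with $i(a_0)=0\in L_1$ and $i(a_1)=1\in L_2$;
\item $Y=\mathbb{A}^1$, with $j$ the inclusion of $\{0,1\}$;
\item $C=Z=*$ and $\alpha=\beta=\mathrm{id}$;
\item $\gamma$ constant at $0$ on $L_1$ and the identity on $L_2$.
\end{enumerate}
Then $i$ and $j$ are monomorphisms. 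The map $\gamma$ is a homotopy equivalence, since $B$ contracts onto $p$ via $(s,t)\mapsto t+s(1-t)$ on $L_1$ and $(s,t)\mapsto(1-s)t$ on $L_2$. It is not one under $A$: no map $\mathbb{A}^1\to B$ sends $0\mapsto 0\in L_1$ and $1\mapsto 1\in L_2$.

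By the same irreducibility, every $r\colon W=\mathbb{A}^1/\{0,1\}\to D$ is induced by some $g\colon\mathbb{A}^1\to L_1$ or $g\colon\mathbb{A}^1\to L_2$ with $g(0)=g(1)$. Hence $\delta r$ is either constant, or induced by some $f\colon\mathbb{A}^1\to\mathbb{A}^1$ with $f(0)=f(1)=c$. In the latter case $\delta r$ is homotopic to a constant via $(1-s)f(t)+sc$. Since $W\simeq S^1_s$ is not $\mathbb{A}^1$-contractible, $\delta$ has no homotopy right inverse.

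\paragraph{Your fallback is the right statement.} Its proof is correct:
\begin{enumerate}
\item cofibrations in $\sT$ are the monomorphisms, so every object is cofibrant;
\item $\mathbb{A}^1$-homotopic maps agree in $\oH(\sT)$, so $\alpha$, $\beta$, $\gamma$ are weak equivalences;
\item the cube lemma \cite[Lemma 5.2.6]{Hovey99}, which needs cofibrant objects rather than left properness, then makes $\delta$ a weak equivalence.
\end{enumerate}
This version is also exactly what the paper uses. In the proof of \textbf{\ref{salist:SA3}}, the maps fed into Theorem \ref{thm:3.1} are only levelwise weak equivalences, and the desired conclusion is a weak equivalence.

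So lead with the cube lemma and drop Plan A. Also drop the claim that $\mathbb{A}^1$-homotopies are ``equivalently'' simplicial homotopies. That is false: simplicial homotopies into discrete sheaves are constant, while $\mathbb{A}^1$-homotopies are not.
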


\begin{proof}[Proof of Proposition 3.3] As before, we provide the proof for spaces---the proof for spectra is nearly identical and is hence omitted. Let $f_* \colon X_* \rtarr X'_*$ be a map of Reedy cofibrant spaces. We have the following pushout square for each $q$:
\[
\xymatrix{
(LX_q \times \DE_{q+1}) \sqcup_{LX_q \times \pa \DE_{q+1}} (X_{q+1} \times \DE_{q+1}) \ar[r]^-g \ar[d] & F_q|X_*| \ar[d]\\
X_{q+1} \times \pa \DE_{q+1} \ar[r] & F_{q+1}|X_*|.
}
\]
\noindent Here the standard map $g(s_i x, u) = [x,\si_i u]$ and $g(x,\de_i v) = [\pa_i x, v]$ is used. The prescription of the map here is interpreted to be given at an arbitrary object in the site $U \in \mathrm{Sm}_S$. $F_q|X_*|$ denotes the $q$th piece of the natural filtration on $|X_*|$, given by capping off the tensor product defining $|X_*|$ at the $q$-simplices. Alternatively, one might take the above to be the definition of $F_q|X_*|$ inductively (with $F_0|X_*|=X_0$) so that $|X_*| = \varinjlim F_q |X_*|$. There is also a similar square for $X'_*$ of course. Proceeding inductively and applying Theorem \ref{thm:3.1}, we see it suffices to show that $f_q \colon LX_q \rtarr LX'_q$ is a homotopy equivalence for each $q$. For each $0 \leq k \leq q$ let $L^k X_q = \colim_{\phi \colon \mathbf{q+1} \epito \mathbf{s}, s \leq k} X_s$. We obtain the following commutative diagram with a pushout square on the right side: \[\xymatrix{
L^{k-1} X_{q-1} \ar[d] \ar[r]^-{L_k}_-{\sim} & L^{k-1}X_{q} \times_{X_{q+1}} L_k X_q \ar[r] \ar[d] & L^{k-1}X_q \ar[d]\\
X_q \ar[r]^{L_k}_{\sim} & L_k X_q \ar[r] & L^k X_q
}\]
\noindent Here $L_k X_q$ denotes the image of $X_k \monoto X_{q+1}$, the map seen in the colimit definition of $LX_q$. Proceeding by induction on $q$, assuming that each $L^{k-1}X_{q-1} \rtarr L^{k}X_{q-1}$ is a cofibration for each $0 < k < q$, we conclude that $L^{k-1}X_q \rtarr L^{k}X_q$ is a cofibration. The same is true of $X'$. Since we know $L_0 \colon X_q \rtarr L_0 X_q$ is an isomorphism, $f_{q+1} \colon L^0 X_q \rtarr L^0 X'_q$ must be a homotopy equivalence. By induction on $q$, and induction on $k$ having fixed $q$, and Theorem \ref{thm:3.1}, we know that $f_{q+1} \colon L^k X_q \rtarr L^k X'_q$ is a homotopy equivalence for all $k$ and $q$.  
\end{proof}

\textbf{\ref{salist:SA5}} is clear from Proposition \ref{prop:3.2}, our definition of connective spectra, and the fact that $|-|$ is a left adjoint. Alternatively, if one defines homotopy sheaves of spectra as being certain direct limits of homotopy sheaves of spaces, then since $|-|$ is a left adjoint, it commutes with those direct limits. The statement is clear once again if one takes connective spectra to be spectra with certain vanishing homotopy sheaves. 
\begin{prop}
$|-| \colon s\sS \rtarr \sS$ takes levelwise connective spectra to connective spectra.
\end{prop}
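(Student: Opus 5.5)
The plan is to reduce to the defining closure properties of $\sS_c$ and use the fact that realization is built out of colimits of the levels. Let $E_* \in s\sS$ with each $E_q \in \sS_c$. By \textbf{\ref{salist:SA1}}, $|E_*| = E_* \otimes_{\DE} \DE_*$ is a coend. We will rewrite it via the skeletal filtration $F_q|E_*|$ used in the proof of \textbf{\ref{salist:SA3}}. That is, $F_0|E_*| = E_0$, and $F_{q+1}|E_*|$ is the pushout of
\[
F_q|E_*| \longleftarrow (LE_q \wedge \DE_{q+1+}) \cup_{LE_q \wedge \pa\DE_{q+1+}} (E_{q+1}\wedge \pa\DE_{q+1+}) \longrightarrow E_{q+1}\wedge \DE_{q+1+},
\]
and $|E_*| = \colim_q F_q|E_*|$.

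We would then argue in three steps.

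\textbf{Step 1.} Since $\sS_c$ is closed under homotopy colimits, it is closed under $X \wedge -$ for $X$ a homotopy colimit of smooth schemes. Smashing $\SI^\infty_+ U$ with $X_+$ gives $\SI^\infty_+(U\times X)$, and this operation commutes with homotopy colimits and extensions. Since $\DE_{q+1}\cong \bA^{q+1}$ and $\pa\DE_{q+1}$ is a finite colimit of affine spaces, the terms $E_{q+1}\wedge \DE_{q+1+}$ and $E_{q+1}\wedge\pa\DE_{q+1+}$ are therefore connective.

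\textbf{Step 2.} The latching objects $LE_q$ are colimits of the $E_s$, and by the Reedy cofibrancy convention the inclusions involved are cofibrations. So $LE_q$ is a homotopy colimit and lies in $\sS_c$.

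\textbf{Step 3.} Each pushout above is along a cofibration, hence is a homotopy pushout. Induction on $q$ then shows that every $F_q|E_*|$ is connective. Finally, the sequential colimit of cofibrations is a homotopy colimit, and \cite[Proposition 6.10]{Hu03} guarantees that directed colimits are homotopically well behaved. Hence $|E_*|$ lies in $\sS_c$.

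The main obstacle is Step 1 together with the homotopical identification of strict colimits with homotopy colimits. We must justify that $\sS_c$, defined as a homotopy-invariant closure, actually contains the strict pushouts and latching colimits. For this we would invoke the cofibrancy assumptions exactly as in the proof of \textbf{\ref{salist:SA3}}, using the gluing Theorem \ref{thm:3.1}.

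A shortcut that avoids the filtration is available. Since $|-|$ is a left adjoint, every $E_q$ is built from $\SI^\infty_+ U$ by colimits and extensions, and by Proposition \ref{prop:3.2} $|\SI^\infty_* K_*| \cong \SI^\infty|K_*|$, which is connective. However, a levelwise-connective simplicial spectrum need not be of the form $\SI^\infty_* K_*$, so the filtration argument is the one we would commit to.
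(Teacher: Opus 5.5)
Your argument is correct, but it takes a different route from the paper. The paper's entire proof is the shortcut you set aside: it says \textbf{\ref{salist:SA5}} follows from Proposition \ref{prop:3.2}, the definition of $\sS_c$, and the fact that $|-|$ is a left adjoint, plus a brief alternative remark about homotopy sheaves. That route is short and complete for simplicial suspension spectra, since $|\SI^{\infty}_*K_*| \cong \SI^{\infty}|K_*| \in \sS_c$. It never explains how a general levelwise connective $E_*$ reduces to that case: left-adjointness handles strict colimits, but not extensions, and not the gap between strict and homotopy colimits.

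Your objection matters in practice. The case the paper actually needs is $\bE_*Y = B_*(\SI^{\infty},\GA,Y)$. It is levelwise a suspension spectrum, but its $0$-th faces are the counits $\epz_{\SI^{\infty}\GA^{q-1}Y}$, not suspensions of maps of spaces. So the shortcut does not apply to it directly, while your skeletal filtration does.

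Your route costs homotopical input that the paper leaves implicit:
\begin{enumerate}
\item Reedy cofibrancy of $E_*$ in $s\sS$. The paper only argues this for simplicial spaces, where cofibrations are monomorphisms. Either state it as a hypothesis, as \textbf{\ref{salist:SA3}} does, or check it for $\bE_*Y$. There the degeneracies are $\SI^{\infty}\GA^i\et$, so its latching maps are suspension spectra of latching maps of spaces, and the check reduces to $\sT$.
\item A pushout-product property for the tensoring of $\sS$ over $\sT$, so that your strict pushouts are homotopy pushouts.
\end{enumerate}

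Once Reedy cofibrancy is granted, your Steps 1--3 collapse to one line. The Bousfield--Kan map gives $|E_*| \simeq \hocolim_{\DE^{\op}}E_*$, using $\DE_n \cong \bA^n \simeq *$; the paper uses exactly this for spaces in Section \ref{subsec:rf}. Since $\sS_c$ is closed under homotopy colimits, you are done. Your filtration is a hands-on proof of that identification.
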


\subsection{The hard axiom}\label{subsec:simpobhard}
We conclude with the proof of \textbf{\ref{salist:SA6}}, the most significant assumption in this paper. 
\begin{prop}\label{prop:3.5}
The map $\ga \colon |\OM^{\infty}_* E_*| \rtarr \OM^{\infty}|E_*|$ given by the adjoint to the composite $\SI^{\infty}|\OM^{\infty}_*E_*| \iso |\SI^{\infty}_* \OM^{\infty}_* E_*| \xrightarrow{|\epz|} |E_*|$ is, for levelwise connective $E_*$, a weak equivalence in $\sT$.
\end{prop}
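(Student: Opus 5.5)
The strategy is to compare $\OM^{\infty}|E_*|$ with $|\OM^{\infty}_* E_*|$ via the spectrum structure. Since $E_{\oU} \cong \OM^Z E_V$ for every morphism $Z \colon \oU \rtarr V$, it suffices to show two things: realization commutes with each $\OM^Z$ up to weak equivalence on sufficiently connected inputs, and realization of simplicial spectra is computed levelwise up to a filtered colimit. Concretely, $|E_*|$ is the spectrification $L$ of the prespectrum $\{|(E_*)_V|\}_V$, where the realizations are of simplicial spaces. By Proposition \ref{prop:2.1}, its zeroth space is then
\[
\OM^{\infty}|E_*| \cong \colim_{(V,Z)} \OM^Z |(E_*)_V|,
\]
and $\ga$ factors as
\[
|(E_*)_{\oU}| \cong |\OM^Z_* (E_*)_V| \rtarr \OM^Z|(E_*)_V| \rtarr \colim_{(V,Z)} \OM^Z |(E_*)_V| .
\]
So the proof splits into two steps. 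First, show that for a simplicial space $K_*$ and a finite-dimensional $Z$, the natural map $|\OM^Z_* K_*| \rtarr \OM^Z|K_*|$ is a weak equivalence when each $K_q$ is sufficiently connected. Second, pass to the filtered colimit using compactness of $S^Z$ (Proposition \ref{prop:1.2}) and the fact that filtered colimits preserve weak equivalences (\cite[Proposition 6.10]{Hu03}).

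For the first step I would reduce to the case of a one-dimensional piece, namely $\OM^{\bA^1}$ together with simplicial loops, since $S^Z$ decomposes up to $\bA^1$-equivalence as a smash of $S^1_s$ and $\bG_m$ factors. For the simplicial circle I would use a path-space fibration $\OM K_q \rtarr PK_q \rtarr K_q$ levelwise, with $PK_q$ contractible. Realizing gives $|\OM K_*| \rtarr |PK_*| \rtarr |K_*|$, where the middle term is contractible by \textbf{\ref{salist:SA3}} (or by levelwise contractibility and \textbf{\ref{salist:SA2}}). The key input is Rezk's theorem \cite{Rezk14}, which says realization commutes with homotopy pullbacks over a simplicial base $K_*$ provided the base is levelwise connected. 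This applies in the Nisnevich $\infty$-topos after $\bA^1$-localization, and for that I would use Morel's $\bA^1$-connectivity theorem so that $\bA^1$-localization preserves the connectivity hypotheses. Applying Rezk's theorem, the realized sequence is again a homotopy fiber sequence, which identifies $|\OM K_*| \simeq \OM|K_*|$. For the $\bG_m$-loops I would use the same fiber-sequence argument, writing $\OM_{\bG_m}$ as a homotopy fiber of the evaluation map $\underline{\Hom}(\bG_{m+},K) \rtarr K$ and noting that realization commutes with $\underline{\Hom}(\bG_{m+},-)$ levelwise, because $\bG_m$ is compact and realization is a diagonal.

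Connectivity is exactly what levelwise connectivity of $E_*$ supplies. If each $E_q$ is connective, then each $(E_q)_V$ is $(\dim \text{codim} V - 1)$-connected, which follows from the connectivity of suspensions used in the proof of Proposition \ref{prop:2.13}. Hence the bases $(E_*)_V$ at which we loop are levelwise connected, and Rezk's hypotheses hold at each stage.

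I expect the main obstacle to be justifying the Rezk-type statement in the $\bA^1$-local setting. Realization is the diagonal, and it commutes with homotopy pullbacks in simplicial sheaves by Rezk, but $\bA^1$-fibrancy is not preserved a priori. I would first try the following: replace $E_*$ levelwise by an $\bA^1$-fibrant model, so that the pointwise statements apply stalkwise (Nisnevich points) using the classical Bousfield--Friedlander $\pi_*$-Kan condition. Then check, using strict $\bA^1$-invariance of homotopy sheaves for connected spaces, that the resulting realized fiber sequence remains an $\bA^1$-fiber sequence.
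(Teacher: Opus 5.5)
Your outer skeleton matches the paper's. You identify $\OM^{\infty}|E_*|$ with $\colim_{(V,Z)}\OM^Z|(E_*)_V|$ through spectrification, prove a finite-stage commutation, and pass to the filtered colimit.

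At the finite stage the routes differ. The paper treats $\OM=\underline{\Hom}_{\sT}(S^{\bA^1},-)$ in one piece: a levelwise path-space sequence, Theorem~\ref{thm:rezk}, fibrant replacement, then iteration. You split $S^Z$ into $S^1_s$ and $\mathbb{G}_m$ factors. That split is a reasonable refinement, since a path-space fibration only produces loops on the simplicial circle. Your plan for that factor is plausible: Rezk, or the stalkwise $\pi_*$-Kan condition in Nisnevich sheaves, followed by the fact that $\bA^1$-localization commutes with simplicial loops on connected spaces (essentially Morel's result). But the split puts the weight of the argument on the $\mathbb{G}_m$ factor, and there the proposal has a genuine gap. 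The paper does not isolate that factor, so there is nothing to borrow from it for this step.

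The step that fails is the claim that realization commutes with $\underline{\Hom}(\mathbb{G}_{m+},-)$ ``because $\mathbb{G}_m$ is compact and realization is a diagonal.''
\begin{itemize}
\item Compactness only gives commutation with filtered colimits. Geometric realization is a sifted colimit, and representables are not projective in Nisnevich sheaves. The Nisnevich stalk of $\underline{\Hom}(\mathbb{G}_{m+},F)$ at a henselian local $X$ is $F(\mathbb{G}_m\times X)$, which is not a stalk, so Nisnevich cohomology of $\mathbb{G}_m\times X$ enters.
\item The identity $d\,\underline{\Hom}(\mathbb{G}_{m+},K_{*,*})\cong\underline{\Hom}(\mathbb{G}_{m+},dK)$ does hold on the nose, but it is not homotopy invariant. $\underline{\Hom}(\mathbb{G}_{m+},-)$ computes the derived mapping object only on $\bA^1$-fibrant input, and $dK$ is not fibrant even when every $K_q$ is. Asserting that $\underline{\Hom}(\mathbb{G}_{m+},dK)\rtarr\underline{\Hom}(\mathbb{G}_{m+},R\,dK)$ is a weak equivalence is exactly the statement you need to prove.
\item Routing through the evaluation fibration does not help. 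It is split with fiber $\OM_{\mathbb{G}_m}K$, so the unbased statement is as hard as the based one.
\end{itemize}

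The missing ingredient is Morel's theory of contractions over a perfect field. For $\bA^1$-connected $K$ one has $\pi_n^{\bA^1}(\OM_{\mathbb{G}_m}K)\cong\pi_n^{\bA^1}(K)_{-1}$, and $M\mapsto M_{-1}$ is exact on strictly $\bA^1$-invariant sheaves. One way to get $|\OM_{\mathbb{G}_m}K_*|\simeq\OM_{\mathbb{G}_m}|K_*|$ is to compare the spectral sequences $\pi_p\pi^{\bA^1}_q\Rightarrow\pi^{\bA^1}_{p+q}$ for $K_*$ and $\OM_{\mathbb{G}_m}K_*$. Both are levelwise connected, so the $\pi_*$-Kan condition holds stalkwise. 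You would also need to check that $\OM_{\mathbb{G}_m}$ preserves the connectivity required to iterate.

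Without such an input, your argument establishes commutation only with simplicial loops. But $\OM^Z$ with $\dim Z=n$ involves $n$ factors of $\mathbb{G}_m$.
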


An important distinction from the theory of operadic monads must be made here. Generally speaking, a monad $\bC$ coming from an operad commutes with $|-|$ up to isomorphism by the categorical Fubini theorem. For the adjunction monad however, as we will show now, a priori $\OM^{\infty}$ commutes with $|-|$ only up to weak equivalence---and therefore $\GA$ commutes with $|-|$ only up to weak equivalence.\\

We prove Proposition \ref{prop:3.5} by reducing to $n$-fold and singlefold loop spaces first, and then passing to the colimit. For $X \in \sT$ denote by $\OM X$ the hom-space $\underline{\Hom}_{\sT}(S^{\bA^1},X)$ (where $S^{\bA^1} = \bA^1/(\bA^1 - \{0\})$). Denote by $\OM^n X$ the hom-space $\underline{\Hom}_{\sT}(S^{\bA^n},X)$ (where $S^{\bA^n} = \bA^n/(\bA^n-\{0\})$. $\OM^n$ (as seen by the loop-suspension adjunction) is the $n$-fold composition of $\OM$ with itself. There is a map $|\OM^n_* X_*| \rtarr \OM^n|X_*|$ again given by the adjoint to $\SI^n|\OM^n_* X_*| \iso |\SI^n_*\OM^n_* X_*| \xrightarrow{|\epz|} |X_*|$ (here $\SI^n(-) = - \wedge S^{\bA^n}$) which we also call $\gamma$.

\begin{lem}\label{lem:3.1}
If $X_* \in s\sT$ is Reedy cofibrant and each $X_q$ is connected, then $\gamma \colon |\OM_* X_*| \rtarr \OM|X_*|$ is a weak equivalence in $\sT$.
\end{lem}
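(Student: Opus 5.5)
We plan to follow the classical argument of May (\cite[Theorem 12.3]{May72}), replacing the topological Bousfield--Friedlander/May quasifibration input with Rezk's formulation of the compatibility of realization with homotopy pullbacks \cite{Rezk14}. Throughout, $\OM X = \underline{\Hom}_{\sT}(S^{\bA^1},X)$ should be read as the derived loop space. For a fibrant replacement, $\OM X$ is the homotopy pullback of $* \rtarr X \leftarrow *$, computed as the homotopy fiber of the evaluation map $PX \rtarr X$ from an $\bA^1$-path space $PX = \underline{\Hom}_{\sT}(\bA^1, X)$ based at $0$. We may also replace $X_*$ levelwise by fibrant objects. This is harmless: $|-|$ preserves levelwise weak equivalences between Reedy cofibrant objects by \textbf{\ref{salist:SA3}}, and $\OM$ preserves weak equivalences between fibrant objects. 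We must check that the comparison $\ga$ is compatible with this replacement, which follows from naturality of $\ga$.

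\textbf{Step 1 (levelwise fiber sequences).} For each $q$ we have a homotopy fiber sequence $\OM X_q \rtarr PX_q \rtarr X_q$, natural in $q$. This gives simplicial objects and maps $\OM_* X_* \rtarr P_* X_* \rtarr X_*$, forming a levelwise homotopy pullback square with the constant simplicial point. Since $PX_q$ is $\bA^1$-contractible ($\bA^1$ contracts to $0$), the realization $|P_*X_*|$ is contractible by \textbf{\ref{salist:SA3}}, comparing it with $|c_* S| = S$.

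\textbf{Step 2 (realization preserves the pullback).} This is the key step. We apply Rezk's criterion: for a simplicial diagram of homotopy pullback squares in an $\infty$-topos (here Nisnevich sheaves, then $\bA^1$-localized), realization preserves the pullback provided the base is levelwise connected, or more precisely provided the map over the base is suitably equifibered. Since each $X_q$ is connected, the pullback squares $\OM X_q \rtarr PX_q \rtarr X_q$ with fiber over the basepoint form a cartesian transformation. The relevant descent statement holds for the colimit over $\DE^{op}$ of such diagrams, because the fibers over the varying basepoints of the connected $X_q$ are all equivalent. This gives that
\[|\OM_* X_*| \rtarr |P_*X_*| \rtarr |X_*|\]
is a homotopy fiber sequence in the simplicial (Nisnevich) model structure.

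The main obstacle is passing from simplicial to $\bA^1$-local homotopy pullbacks. $\bA^1$-localization does not preserve homotopy pullbacks in general. Here we would use that the base $|X_*|$ is connected: Morel's $\bA^1$-connectivity results give that $\bA^1$-localization preserves fiber sequences with connected base (the Asok--Wickelgren--Williams style statement). Alternatively, we can first replace $X_*$ levelwise by $\bA^1$-fibrant objects and show that the diagonal of $\bA^1$-local bisimplicial sheaves has an $\bA^1$-invariant loop space, again using connectedness of the base. This is the step we expect to require the most care.

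\textbf{Step 3 (identification).} Contractibility of $|P_*X_*|$ from Step 1 and the fiber sequence from Step 2 identify $|\OM_*X_*|$ with the homotopy fiber of $* \rtarr |X_*|$, that is, with $\OM|X_*|$. Finally, we check that this identification agrees with $\ga$. The adjoint of the composite $\SI|\OM_*X_*| \iso |\SI_*\OM_*X_*| \xrightarrow{|\epz|} |X_*|$ factors through the comparison of homotopy fibers, because the counit $\SI\OM X_q \rtarr X_q$ is precisely the map classified by the null-homotopies of the path fibration, naturally in $q$. Hence $\ga$ is a weak equivalence. Iterating with $\OM^n = \OM \circ \cdots \circ \OM$ and tracking connectivity then feeds into the $n$-fold case.
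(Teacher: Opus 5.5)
Your outline follows the paper's proof of this lemma. You use the path fibration $\OM_*X_*\to P_*X_*\to X_*$ with $|P_*X_*|$ contractible (Lemma~\ref{lem:3.1.2}), a Rezk-type result that realization preserves this homotopy pullback over a levelwise connected base (Lemma~\ref{thm:3.2and} via Theorem~\ref{thm:rezk}), and fibrant replacement handled by naturality of $\ga$ and \textbf{\ref{salist:SA3}}.

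There is, however, a genuine gap at your very first identification. The paper's Lemma~\ref{lem:3.1.2} makes the same one (it treats $\OM_*X_*$ as $\Hom(\DE[1]/\partial\DE[1],X_*)$), so following the paper more closely will not repair it. Here $S^{\bA^1}=\bA^1/(\bA^1-\{0\})$ is the Tate sphere, $\bA^1$-equivalent to $\mathbb{P}^1\simeq S^1_s\wedge\mathbb{G}_m$. Hence $\OM X=\underline{\Hom}_{\sT}(S^{\bA^1},X)\simeq\OM_s\OM_{\mathbb{G}_m}X$, where $\OM_s$ is loops on the simplicial circle and $\OM_{\mathbb{G}_m}=\underline{\Hom}_{\sT}(\mathbb{G}_m,-)$. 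The homotopy pullback $*\times^h_X*$ is something else. It is the fiber of evaluation at $1$ on $\bA^1$-paths based at $0$, namely $\underline{\Hom}_{\sT}(\bA^1/\{0,1\},X)\simeq\OM_sX$ for fibrant $X$, since $\bA^1/\{0,1\}\simeq S^1_s$. So Steps 1--3 can at best prove the lemma for $\OM_s$, where ``each $X_q$ connected'' is the natural hypothesis. Step 3 has the same conflation: only the $S^1_s$-counit is classified by null-homotopies in the path fibration, not $S^{\bA^1}\wedge\OM X\to X$.

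For the statement as written you would additionally need two facts:
\begin{enumerate}
\item $|\OM_{\mathbb{G}_m,*}X_*|\to\OM_{\mathbb{G}_m}|X_*|$ is a weak equivalence;
\item each $\OM_{\mathbb{G}_m}X_q$ is still connected, so that the $S^1_s$-case applies to $\OM_{\mathbb{G}_m,*}X_*$.
\end{enumerate}
Neither is a path-space statement, since $\mathbb{G}_m$ is not a suspension. The natural input is Morel's $\pi_n^{\bA^1}(\OM_{\mathbb{G}_m}X)\cong\pi_n^{\bA^1}(X)_{-1}$ together with exactness of contraction, which is proved over a perfect field.

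Even for $\OM_s$, your Step 2 does not yet do the work. First, $P_*X_*\to X_*$ is not a cartesian (equifibered) transformation, since that would force every $\OM X_n\to\OM X_m$ to be an equivalence. What Rezk's argument uses is that $\pi_0$ of the base is homotopically constant, which is how Theorem~\ref{thm:rezk} is phrased.

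Second, in the Nisnevich $\infty$-topos this requires the $X_q$ to be \emph{simplicially} connected. That holds after $\bA^1$-fibrant replacement, but then you only obtain a Nisnevich fiber sequence over $|X_*|$, which need not be $\bA^1$-local. The claim that $\bA^1$-localization preserves fiber sequences with connected base is not available in that generality:
\begin{itemize}
\item Morel's theorem, in the form used by Asok--Wickelgren--Williams, carries extra $\bA^1$-invariance hypotheses, for example strong $\bA^1$-invariance of $\pi_0\OM_sB=\pi_1^s(B)$.
\item It is proved over a perfect field, whereas $S$ here is a smooth affine $k$-scheme.
\end{itemize}
You would have to verify such a hypothesis for $B=|X_*|$, whose $\pi_1^s$ is a quotient of $\pi_1^{\bA^1}(X_0)$. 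Your alternative via ``$\bA^1$-local bisimplicial sheaves'' is not spelled out.

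The paper never forms a Nisnevich fiber sequence and then localizes. It runs its descent lemma and Rezk's local-to-global argument (Lemma~\ref{lem:localtoglobal}) directly on $\bA^1$-homotopy pullbacks and colimits, assuming $\pi_0^{\bA^1}(B_*)$ is discrete. So the issue you isolate is carried there by the descent input for $\bA^1$-weak equivalences. Either way, this step is where the content lies, and your proposal leaves it open.
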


We first note that Lemma \ref{lem:3.1} allows us to conclude Proposition \ref{prop:3.5}. Of course, iteratively applying the above lemma tells us that $\ga^n \colon |\OM^n_* X_*| \rtarr \OM^n|X_*|$ is a weak equivalence for Reedy cofibrant $X_*$ with connected $X_q$. The idea now is that (under the assumptions on $E_*$) the map $\ga \colon |\OM^{\infty}_* E_*| \rtarr \OM^{\infty}|E_*|$ can be identified with the colimit of the maps $\ga^n \colon |\OM^n_* (E_n)_*| \rtarr \OM^n|(E_n)_*|$, so that Proposition \ref{prop:3.5} follows by passing to colimits. We must clarify the notation used. We have written $E_n$  to mean $E_{\oU/\bA^n}$---the \emph{sequential} coordinatized indexing here is chosen purely for simplicity. Note that each $(E_n)_*$ is a simplicial space, and $\OM^n_* (E_n)_*$ is compatibly isomorphic to $(E_0)_*$, so that the colimit of $|\OM^n_* (E_n)_*|$ is identified with $|\OM^{\infty}_*E_*|$. By a remark made after Proposition \ref{prop:2.1}, for $E_* = LT_*$, we can compute the colimit of $\OM^n |(E_n)_*|$ as $\OM^{\infty} |L T_*| \iso \OM^{\infty} L|T_*|$, where the realization of a simplicial prespectrum is defined as the levelwise realization--so that the needed observation follows by working at the prespectrum level.\\

There is, however, a caveat. The argument we use moving forward only works on fibrant objects---this requires us to pass to fibrant replacements. We introduce fibrant replacements in $s\sT$ after proving Lemma \ref{thm:3.2and} thus proving Lemma \ref{lem:3.1}. The following is a precise reformulation of the claim that emphasizes what is needed.

\begin{lem}\label{lem:3.1.2}
Let $X_* \in s\sT$, then $|P_*X_*|$ is contractible, and there are natural morphisms $\ga$ and $\de$ making the following diagram commute:
\[
\xymatrix{
|\OM_* X_*| \ar[r]^{\subset} \ar[d]_{\ga} & |P_*X_*| \ar[d]_{\de} \ar[r]^{|p_*|} & |X_*| \ar@{=}[d]\\
\OM |X_*| \ar[r]^{\subset} & P|X_*| \ar[r]^p & |X_*|.
}
\]
\noindent Moreover, when $X_*$ is fibrant, Reedy cofibrant, and level-wise connected, the top row of the above diagram $\xymatrix@1{|\OM_* X_*| \ar[r] & |P_*X_*| \ar[r]^{|p_*|} & |X_*|}$ is a homotopy fiber sequence, and therefore $\ga \colon |\OM_* X_*| \rtarr \OM|X_*|$ is a weak equivalence.
\end{lem}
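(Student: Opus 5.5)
The plan is to build the path-space fibration levelwise and compare it with the path fibration of the realization. Set $P X = \underline{\Hom}_{\sT}(\bA^1_+, X)\times_X S$, the based maps out of $\bA^1$ sending $1$ to the basepoint. Then $p\colon PX \rtarr X$ is evaluation at $0$, and $\OM X$ is its fiber, realized as maps out of $S^{\bA^1}$. Levelwise this gives $P_*X_*$, $p_*$ and $\OM_* X_*$, and the inclusion $\OM_*X_* \subset P_*X_*$ is a pullback of $p_*$ along the basepoint. Contractibility of $|P_*X_*|$ comes from \textbf{\ref{salist:SA2}}. The standard contraction $\bA^1 \times \bA^1 \rtarr \bA^1$, $(s,t)\mapsto st$, gives an $\bA^1$-homotopy from $\id_{PX}$ to the constant map, natural in $X$. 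Applying it levelwise yields a simplicial homotopy on $P_*X_*$ (via the constant-scheme $\DE[1]$ construction of Proposition \ref{prop:3.1}), and realizing it contracts $|P_*X_*|$.

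The maps $\ga$ and $\de$ are defined exactly as $\ga$ was, by adjunction. Realization commutes with $-\wedge \bA^1_+$ and with $-\wedge S^{\bA^1}$, since it is a left adjoint and $|-|$ preserves finite products (as in the proof of Proposition \ref{prop:3.1}). The levelwise evaluation map $\bA^1_+\wedge P_*X_* \rtarr X_*$ therefore realizes to a map $\bA^1_+\wedge|P_*X_*| \rtarr |X_*|$, whose adjoint is $\de$. Restricting to the subobject $|\OM_*X_*|$ gives $\ga$. Commutativity of both squares is a naturality check: evaluation at $0$ commutes with realization, and the left square is the restriction of $\de$.

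The main obstacle is showing that the top row is a homotopy fiber sequence. The bottom row is one because $p$ is a fibration when $|X_*|$ is fibrant; more generally one passes to a fibrant replacement and uses that $\OM$ is then the homotopy fiber. For the top row, each level $\OM X_q \rtarr PX_q \rtarr X_q$ is a fiber sequence with $PX_q$ contractible, because $X_q$ is fibrant. The question is whether realization, which is the diagonal of a bisimplicial sheaf, preserves this levelwise homotopy pullback. I would invoke Rezk's reformulation \cite{Rezk14} of the Bousfield--Friedlander $\pi_*$-Kan condition: realization commutes with homotopy pullbacks over a simplicial base whose levels are connected. The key point is that $\pi_0^{\bA^1}(X_q)$ is trivial for all $q$, which makes the simplicial sheaf $[q]\mapsto \pi_0^{\bA^1}X_q$ constant, so the $\pi_0$-fibration condition holds trivially. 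The work lies in running this argument in the $\bA^1$-local setting. I would try to check it stalkwise on Nisnevich points after $\bA^1$-fibrant replacement, where $\bA^1$-connectedness gives levelwise connectedness of the stalks, and then apply the simplicial-set version of the theorem. With the top row a homotopy fiber sequence, Theorem \ref{thm:1.2} gives long exact sequences for both rows. Since $|P_*X_*|$ and $P|X_*|$ are contractible and the right vertical map is the identity, the five lemma on homotopy sheaves, together with Theorem \ref{thm:1.1}, shows that $\ga$ is a weak equivalence. Basepoint components are handled by the connectedness hypothesis.
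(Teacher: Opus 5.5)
\textbf{There is a genuine gap.} Your outline has the paper's architecture. You contract $|P_*X_*|$ by realizing a levelwise $\bA^1$-contraction (Proposition~\ref{prop:3.1}), obtain $\de$ and $\ga$ as adjoints of realized evaluation maps, and reduce to showing that realization preserves the levelwise path fibration over a levelwise connected base. The gap is in that last step. You only sketch it, and the sketch proves less than is needed.

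Nisnevich stalks detect simplicial weak equivalences and homotopy pullbacks, not $\bA^1$-local ones. Applying Bousfield--Friedlander to the stalks of the levelwise $\bA^1$-fibrant $X_*$ shows only that $|\OM_*X_*|\rtarr|P_*X_*|\rtarr|X_*|$ is a homotopy fiber sequence in the simplicial (Nisnevich-local) model structure. Its long exact sequence is therefore in simplicial homotopy sheaves. The bottom row, which is meaningful only after $\bA^1$-fibrant replacement of $|X_*|$, involves $\pi^{\bA^1}_*(|X_*|)$ instead. These need not agree, because a realization of $\bA^1$-local spaces need not be $\bA^1$-local, and $L_{\bA^1}$ does not preserve homotopy pullbacks in general. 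So your five-lemma comparison does not go through as written.

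Closing the gap requires a non-formal input. One option is Morel's theorem (over a perfect field) that $\bA^1$-localization preserves simplicial fiber sequences with $\bA^1$-connected base, applied to your stalkwise result. The paper does not pass to stalks. It shows $p_*$ is a Reedy fibration via the pullback-power axiom. It then argues the realization/homotopy-pullback compatibility (Theorem~\ref{thm:rezk}) internally in $s\sT$, using descent (Lemma~\ref{lem:descent}), the local-to-global Lemma~\ref{lem:localtoglobal}, Lemma~\ref{lem:siftedprojection}, and local projections. On either route, the non-left-exactness of $L_{\bA^1}$ is exactly what this step has to confront.

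Two further points. First, the fiber of evaluation at $0$ on paths with $\phz(1)=*$ is $\underline{\Hom}_{\sT}(\bA^1/\{0,1\},X)$, not $\underline{\Hom}_{\sT}(S^{\bA^1},X)$. Here $\bA^1/\{0,1\}$ is $\bA^1$-equivalent to the simplicial circle, while $S^{\bA^1}=\bA^1/(\bA^1-\{0\})\simeq\mathbb{P}^1$ carries an extra $\mathbb{G}_m$-factor. So the path fibration only addresses simplicial loops. For $\OM=\underline{\Hom}_{\sT}(S^{\bA^1},-)$ one also needs $\mathbb{G}_m$-loops to commute with realization. The paper makes the same identification ($\OM_*X_*\simeq\Hom(\DE[1]/\partial\DE[1],X_*)$), so this is not a divergence from it, but the sentence as you wrote it is false. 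Second, $(s,t)\mapsto st$ fixes $0$, not $1$, so it does not preserve your $PX$. Use $(s,t)\mapsto s+t-st$ instead, or base paths at $0$ and evaluate at $1$.
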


\noindent We have used the standard notation $PY$ to denote the path-space of $Y \in \sT$, defined by $\underline{\Hom}_{\sT} (\bA^1_+, Y)$. The simplicial version $P_*Y_*$ for $Y_* \in s\sT$ is defined by taking the levelwise path-space. We outline the proof of the first part here. Firstly, we note that there is a natural contracting homotopy equivalence $h \colon PY \rtarr *$ given by $\phz \mapsto \phz \circ \iota_0$ (here $\iota_0 \colon S \rtarr \bA^1_S$ denotes the inclusion of $S$ as the basepoint 0 of $\bA^1_+$ and we identify the composite $S \rtarr Y$ with the basepoint of $Y$). This homotopy, applied to each $PX_q$, descends to a simplicial contracting homotopy $\DE[1] \times P_*X_* \rtarr P_*X_*$. By Proposition \ref{prop:3.1}, we conclude that $|P_*X_*|$ is indeed contractible. We are left with defining $\de \colon |P_*X_*| \rtarr P|X_*|$. Everything that follows in this paragraph fixes an arbitrary $U \in \mathrm{Sm}_S$, and we work over this $U$ implicitly. For $f \in PX_q$, $u \in \DE_q$, and $t \in \bA^1$, define $\de[f,u](t)=[f(t),u]$. $\de$ is a well-defined morphism in $\sT$, and it indeed makes the diagram in question commute. In some sense, this definition of $\de$ was the only sensible choice we had. There is categorical reason for this. The functors $- \wedge \bA^1_+$ and $P(-)$ are adjoint to each other. The map $\de \colon |P_*X_*| \rtarr P|X_*|$ is the adjoint to $|P_*X_*| \wedge \bA^1_+ \iso |(PX \wedge \bA^1_+)_*| \xrightarrow{|\epsilon_*|} |X_*|$ where $\epsilon \colon PX \wedge \bA^1_+ \rtarr X$ is the counit map.\\

What remains is the following, an analog of J.P. May's result in \cite{May72}, and a specialization of Anderson's result in \cite{anderson78fib}, for the geometric realization of fibrations.

\begin{lem}\label{thm:3.2and}
If $p_* \colon E_* \rtarr B_*$ is a fibration in $s\sT$ with fiber $F_*$ and $B_*$ is Reedy cofibrant and level-wise connected then $\xymatrix@1{|F_*| \ar[r] & |E_*| \ar[r]^{|p_*|} & |B_*|}$ is a homotopy fiber sequence. Moreover, when $X_*$ is fibrant, $p_* \colon P_*X_* \rtarr X_*$ is a fibration in $s\sT$ with fiber $\OM_* X_*$, leading us to conclude that $\xymatrix@1{|\OM_* X_*| \ar[r] & |P_*X_*| \ar[r]^{|p_*|} & |X_*|}$ is a homotopy fiber sequence (when $X_*$ is Reedy cofibrant and levelwise connected).
\end{lem}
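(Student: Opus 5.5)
The plan is to deduce the first assertion from Rezk's criterion for realizations commuting with homotopy pullbacks \cite{Rezk14}, phrased in the diagonal model. Recall from the proof of \textbf{\ref{salist:SA1}} that for a simplicial space, viewed as a bisimplicial sheaf, $|K_*|$ is the diagonal. The Bousfield--Friedlander/Rezk theorem says the following. Suppose $p_* \colon E_* \rtarr B_*$ is a levelwise fibration. Suppose also that the simplicial sheaves $q \mapsto \pi_0^{\bA^1}$ of the fibers behave well, meaning $B_*$ is levelwise connected, so the $\pi_0$-condition (that $\pi_0$ of the base is "$\pi_*$-Kan") holds automatically. Then the diagonal of the levelwise homotopy pullback square is a homotopy pullback. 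In the topos of simplicial sheaves, these statements hold stalkwise (Nisnevich points). I would therefore first prove the statement for the simplicial (Nisnevich-local) model structure, by checking the Bousfield--Friedlander $\pi_*$-Kan condition at stalks. For connected $B_q$ the $\pi_0$ stalks are trivial, so the condition reduces to the classical one and holds.

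The first step is to show that $\xymatrix@1{|F_*| \ar[r] & |E_*| \ar[r] & |B_*|}$ is a homotopy fiber sequence in the simplicial model structure. Take the square with corners $F_*$, $E_*$, $c_*S$, $B_*$. It is levelwise a homotopy pullback because each $p_q$ is a fibration, and fibrant spaces are in particular simplicially fibrant. Apply the stalkwise Bousfield--Friedlander theorem, using Reedy cofibrancy of $B_*$ so that $|-|$ computes the homotopy colimit (via the gluing argument of \textbf{\ref{salist:SA3}}). This gives a simplicial homotopy pullback after realization.

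The second step, which I expect to be the main obstacle, is the $\bA^1$-upgrade. A simplicial homotopy fiber sequence need not remain an $\bA^1$-fiber sequence unless the base is suitably $\bA^1$-local. I would use that each $E_q$ and $B_q$ is $\bA^1$-fibrant, and then invoke the motivic version of Rezk's result: $\bA^1$-localization of the Nisnevich topos is a left exact-enough localization on connected objects. Concretely, I would appeal to Morel's theorem that $\bA^1$-localization preserves homotopy fiber sequences with $\bA^1$-connected base. The base $|B_*|$ is $\bA^1$-connected, since $\pi_0^{\bA^1}$ of a realization of levelwise connected objects is a coequalizer of trivial sheaves. Hence $L_{\bA^1}$ applied to the simplicial fiber sequence stays a fiber sequence, and comparing with the fiber sequence of the $\bA^1$-fibrant replacement of $|p_*|$ gives the claim. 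This step needs the hypotheses on the base $S$ (perfect field) under which Morel's result is available, so I would reduce to that case.

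For the "moreover" part, the argument is levelwise and routine. For fibrant $X_q$, the map $PX_q \rtarr X_q$ given by evaluation at $1$ is $\underline{\Hom}_{\sT}$ of the cofibration $S_+ \rtarr \bA^1_+$ into a fibrant object. By the SM7-type property of the closed model structure on $\sT$ it is a fibration. Its fiber over the basepoint is $\underline{\Hom}(\bA^1/\{1\},X_q)$, which I identify with $\OM X_q$ in the paper's sense. These constructions are natural, so $p_*$ is a levelwise fibration with fiber $\OM_*X_*$. The first part then applies, and together with contractibility of $|P_*X_*|$ (Lemma \ref{lem:3.1.2}) this yields the stated homotopy fiber sequence.
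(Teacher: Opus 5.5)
Your proposal has a genuine error in the ``moreover'' part and two gaps in the first part.

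\textbf{The ``moreover'' part.} The identification of the fiber with $\OM X_q$ fails. The map $S_+\to\bA^1_+$ (inclusion at $1$) along which you power is an \emph{acyclic} cofibration. So your own SM7 argument shows that $PX_q=\underline{\Hom}_{\sT}(\bA^1_+,X_q)\to X_q$ is an acyclic fibration. Its fiber $\underline{\Hom}_{\sT}(\bA^1/\{1\},X_q)$ is contractible: the pointed space $(\bA^1,1)$ is $\bA^1$-contractible via $(t,s)\mapsto 1+s(t-1)$. Hence the fiber is not $\OM X_q=\underline{\Hom}_{\sT}(S^{\bA^1},X_q)$, and for the same reason $PX_q\simeq X_q$ is not contractible.

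Switching to based paths $\underline{\Hom}_{\sT}((\bA^1,0),X_q)$ makes the total space contractible. But then the fiber of evaluation at $1$ is $\underline{\Hom}_{\sT}(\bA^1/\{0,1\},X_q)$ with $\bA^1/\{0,1\}\simeq S^1_s$, whereas $S^{\bA^1}=\bA^1/(\bA^1-\{0\})\simeq S^1_s\wedge\mathbb{G}_m$. So a path-space fibration over $X_q$ built from $\bA^1$-paths only produces the simplicial loop functor. Getting $\OM_*X_*$ needs a separate argument for the $\mathbb{G}_m$-direction. Following the paper will not patch this: its pullback--power sketch identifies $\OM_*X_*$ with the external loop object $\Hom(\DE[1]/\partial\DE[1],X_*)$, which is not levelwise $\underline{\Hom}_{\sT}(S^{\bA^1},X_q)$ either.

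\textbf{The first part.} Your route here is genuinely different from the paper's. The paper deduces it from Theorem~\ref{thm:rezk} with $Y_*$ terminal, running Rezk's descent/local-projection argument directly in $s\sT$ with $\bA^1$-equivalences. You instead split off a Nisnevich-local step, where descent genuinely holds stalkwise, from an $\bA^1$-localization step. That is a good idea: it makes explicit the control of the non-left-exact $\bA^1$-localization that the paper's appeal to model-topos descent (Lemma~\ref{lem:descent}) takes for granted, since the $\bA^1$-local structure on $\sT$ is not a model topos. Two points still need fixing.

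(a) You conflate the hypothesis $\pi_0^{\bA^1}(B_q)=*$ with simplicial connectedness. Simplicial connectedness is what both the stalkwise Bousfield--Friedlander/Rezk criterion and Morel's theorem actually require. The two notions agree only for $\bA^1$-fibrant $B_q$; for example, $\bA^1$ is $\bA^1$-connected but its simplicial $\pi_0$-sheaf is $\bA^1$ itself. Fibrancy of $B_*$ is not among the hypotheses. This is repairable: replace $B_*$ by a Reedy fibrant model, pull $p_*$ back using right properness, and use \textbf{\ref{salist:SA3}}. It is automatic in the application, where $X_*$ is fibrant.

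(b) Morel's theorem, that $\bA^1$-localization preserves fiber sequences with simplicially connected base, is proved over a perfect field. The paper's base is a smooth affine $k$-scheme $S$, and there is no evident way to ``reduce'' the statement over $S$ to $S=\mathrm{Spec}\,k$. As written, your argument establishes the first assertion only over $\mathrm{Spec}\,k$.
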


We must first define what we mean by a fibration $p_* \colon E_* \rtarr B_*$ in $s\sT$.
This definition follows that of Reedy for the model structure on $s\aC$ given a model category $\aC$, using coskeletons. Namely a map $p_* \colon E_* \rtarr B_*$ is a fibration in $s\sT$ if for each $m$, the induced map $E_{m+1} \rtarr \mathrm{cosk}_m(E)_{m+1} \times_{\mathrm{cosk}_m(B)_{m+1}} B_{m+1}$ is a fibration in $\sT$. Together with the dually defined cofibrations in $s\sT$, we obtain a model structure on $s\sT$ that is in fact simplicially enriched. There is a pairing $- \ten - \colon \mathbf{sSet} \times s\sT \rtarr s\sT$ given by $(A_* \ten X_*)_k = X_k[A_k] := \coprod_{a \in A_k} X_k$. Appealing to the dual provides a pairing $\Hom (-,-) \colon \mathbf{sSet}^{\op} \times s\sT \rtarr s\sT$. Finally, there is the standard $\mathbf{sSet}$-enrichment on $s\sT$, $\Hom(-,-)\colon s\sT^{\op} \times s\sT \rtarr \mathbf{sSet}$, given by $\Hom(X_*,Y_*) = \Hom_{s\sT}(\DE[-] \ten X_*, Y_*) \in \mathbf{sSet}$. Moreover, these pairings satisfy the pullback-power axiom\footnote{They also satisfy an equivalent dual \emph{pushout-copower} axiom.}:

\begin{itemize}
\item If $i_* \colon A_* \rtarr B_*$ is a cofibration in $\mathbf{sSet}$ and $p_* \colon X_* \rtarr Y_*$ is a fibration in $s\sT$, then the following is a fibration in $\mathbf{sSet}$. \[\Hom(B_*,X_*) \rtarr \Hom(B_*,Y_*) \times_{\Hom(A_*,Y_*)} \Hom(A_*,X_*)\]  \noindent Moreover, the above map is an acyclic fibration if $p_*$ is also a weak equivalence.
\end{itemize}

With this new enriched model structure in our toolbox, we now handle the simplicial path space fibration on fibrant objects. That $p_* \colon P_*X_* \rtarr X_*$ is a fibration in $s\sT$ when $X_*$ is fibrant, is an instance of a well-known general fact (see \cite[Lemma 7.5]{goerssjardine} for example). The idea is to begin with the cofiber sequence of pointed simplicial sets\footnote{We consider $\DE[1]$ to be pointed at 1 here.} $\partial \DE[1] \monoto \DE[1] \rtarr \DE[1]/\partial \DE[1]$. The pullback-power axiom for $s\sT$ discussed above then ensures that when we power this sequence by a fibrant $X_*$, we obtain a fiber sequence $\Hom(\DE[1]/\partial \DE[1], X_*) \rtarr \Hom (\DE[1],X_*)\rtarr \Hom(\partial\DE[1],X_*)$. 
Noting that $\OM_* X_* \simeq \Hom(\DE[1]/\partial \DE[1],X_*)$, $P_*X_* \simeq \Hom(\DE[1],X_*)$, and $\Hom(\partial \DE[1],X_*) \simeq X_*$, this yields a fiber sequence $\OM X_* \rtarr PX_* \rtarr X_*$.\\

It remains to show that if $p_* \colon E_{*} \rtarr B_{*}$ is a fibration in $s\sT$ with fiber $F_*$ and $B_*$ is Reedy cofibrant and level-wise connected, then $|F_*| \rtarr |E_*| \rtarr |B_*|$ is a homotopy fiber sequence. This is the real meat of Lemma \ref{thm:3.2and}. In \cite{May72}, the author employs the Dold-Thom criteria for quasifibrations to achieve the required analogous result. 
Anderson, in \cite{anderson78fib}, provides a conceptually clearer treatment of the result by passing to bisimplicial sets and solving a lifting problem with respect to the image of the left adjoint of the realization functor.
The initial idea here was to make use of Anderson's argument, or the $\pi_*$-Kan condition for bisimplicial sets (due to Bousfield and Friedlander \cite{BF1978})---both of which, in a sense, rely on the construction of the simplicial groupoid $\PI_{\infty}(-)$\footnote{For completeness, the groupoid $\PI_{\infty} (Z)$ is the wreath product of the fundamental groupoid with the product of the higher homotopy groups of $Z$.}. However, the conceptual reformulation, and the strengthened result pertaining to geometric realizations of homotopy pullbacks, in \cite{Rezk14} is more amenable to our purposes---and is easier to approach. 
A specialized application of \cite[Proposition 5.5.6.17]{Lurie2017} to the $\infty$-topos $L_{W}\sT$ also yields the required result, but we expect that the generality of Lurie's statement will be more relevant in future studies where Rezk's argument is not readily accessible. We state the goal now:

\begin{thm}\label{thm:rezk}
    Let $B_* \in s\sT$ be a simplicial space such that the simplicial sheaf $\pi_0^{\bA^1}(B_*)$, defined levelwise as $\pi_0^{\bA^1}(B_*)_n = \pi_0^{\bA^1} (B_n)$, is discrete (as a simplicial object). Let $f_* \colon Y_* \rtarr B_*$ be a map in $s\sT$.
    Then for every homotopy pullback square, 
    \[
    \xymatrix{
        F_* \ar[r] \ar[d] & Y_* \ar[d]^{f_*}\\
        E_* \ar[r]_{p_*} & B_*
    }
    \]
    the following square,
    \[
    \xymatrix{
        |F_*| \ar[r] \ar[d] & |Y_*| \ar[d]^{|f_*|}\\
        |E_*| \ar[r]_{|p_*|} & |B_*|
    }
    \]
    obtained by applying $|-|$ to each corner, is still a homotopy pullback.
\end{thm}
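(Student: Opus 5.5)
The plan is to transport Rezk's argument from \cite{Rezk14} into the motivic setting. Rezk's criterion says: in a model topos, realization (diagonal) of simplicial objects preserves homotopy pullbacks over $B_*$ exactly when $B_*$ satisfies a ``$\pi_0$-discreteness''/descent condition. Concretely, I would first reduce to the case where $p_*$ and $f_*$ are levelwise fibrations between levelwise fibrant objects. Reedy fibrant replacement in $s\sT$ does not change $|-|$ up to weak equivalence, by \textbf{\ref{salist:SA3}}, so this reduction is harmless. After it, the homotopy pullback is a levelwise strict pullback. I would then regard $s\sT$ as bisimplicial sheaves and $|-|$ as the diagonal, as in Section \ref{subsec:simpobez}.

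The key step is to show that for each $\mathbf{q}$, the square of levelwise pieces
\[
\xymatrix{
E_q \ar[r] \ar[d] & |E_*| \ar[d] \\
B_q \ar[r] & |B_*|
}
\]
is a homotopy pullback in $\sT$. This is the motivic form of Rezk's ``realization fibration'' (equifibered) property. Once it holds for both $E_* \to B_*$ and $Y_* \to B_*$, the map $|F_*| \to |E_*| \times^h_{|B_*|} |Y_*|$ can be checked after base change along each $B_q \to |B_*|$. Since $|B_*|$ is the homotopy colimit of the $B_q$ and these form an effective epimorphism (surjectivity on $\pi_0^{\bA^1}$), pulling back to each $B_q$ detects weak equivalences. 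This uses universality of colimits (descent) in the $\infty$-topos $L_W\sT$. The base-changed map is then $F_q \to E_q \times^h_{B_q} Y_q$, which is an equivalence by hypothesis.

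To establish the key step, I would use the discreteness hypothesis on $\pi_0^{\bA^1}(B_*)$. It says that every simplicial operator $\theta \colon B_q \to B_r$ induces an isomorphism on $\pi_0^{\bA^1}$. Rezk's lemma then states that any map $E_* \to B_*$ which is \emph{homotopy cartesian} (all squares $E_\theta, B_\theta$ homotopy pullbacks) has realization whose fibers are the levelwise fibers. I would therefore factor $E_* \to B_*$ as an equivalence followed by a homotopy cartesian map, by pulling back a fibration over $|B_*|$ along $B_q \to |B_*|$ for every $q$. Whether this factorization exists is precisely where discreteness of $\pi_0$ enters: it guarantees that the homotopy fibers over the various components of $B_q$ assemble coherently, in the style of Bousfield--Friedlander's $\pi_*$-Kan condition. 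This step is stalkwise, checking on Nisnevich points. The $\bA^1$-local case then follows because the $\bA^1$-localization functor $L_{\bA^1}$ preserves realizations. The catch is that it need not preserve homotopy pullbacks, so I would instead argue directly in the $\infty$-topos $L_W\sT$ and invoke \cite[Proposition 5.5.6.17]{Lurie2017}-style descent.

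I expect the main obstacle to be the $\bA^1$-localization, not the simplicial (Nisnevich-local) statement. The Nisnevich-local statement follows from the topological Bousfield--Friedlander theorem applied stalkwise. For the $\bA^1$ part, I would try the following. Replace all objects by $\bA^1$-fibrant ones, so that homotopy pullbacks are computed simplicially. Then use that $\pi_0^{\bA^1}$-discreteness, combined with Morel's theorem that $\bA^1$-localization preserves fiber sequences with $\bA^1$-connected base (the input behind Theorem \ref{thm:1.2}), to compare the simplicial and $\bA^1$ realizations.
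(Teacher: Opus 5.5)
Your key step is where the proof breaks, and everything after it depends on that step.

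\textbf{The key step is false.} You claim that for every map $E_* \rtarr B_*$ and every $q$, the square with corners $E_q$, $|E_*|$, $B_q$, $|B_*|$ is a homotopy pullback. This fails even when $\pi_0^{\bA^1}(B_*)$ is discrete. Take $B_*$ terminal and $E_* = \DE[1]$, made into a simplicial space via constant schemes as in Proposition \ref{prop:3.1}. The square then asserts $E_0 \simeq |E_*|$. But $E_0 = S \sqcup S$, while $|E_*| \cong \bA^1$ is contractible. The theorem itself does hold in this case: over a terminal base it only says that $|-|$ commutes with finite products.

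What your key step actually requires is that $E_* \rtarr B_*$ be homotopy cartesian in the simplicial direction, which is the hypothesis of Lemma \ref{lem:descent}(2). That property is invariant under levelwise weak equivalence over $B_*$. So your proposed factorization, an equivalence followed by a homotopy cartesian map, exists only if the map was already homotopy cartesian. Pulling a fibration over $|B_*|$ back along each $B_q \rtarr |B_*|$ does produce a homotopy cartesian $E'_*$. However, the comparison $E_q \rtarr E'_q$ is an equivalence exactly when the key step already holds, so the argument is circular.

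Discreteness of $\pi_0^{\bA^1}(B_*)$ cannot rescue this, since the terminal $B_*$ in the example is as discrete as possible. Consequently your base-change argument along $B_q \rtarr |B_*|$, which needs the key step for $E_*$, $Y_*$ and $F_*$, has no foundation. The hypothesis of the theorem is never used in a valid way. The same example breaks the key step before any $\bA^1$-localization, so your last paragraph cannot repair it. In any case, stalkwise Bousfield--Friedlander would need $\pi_*$-Kan conditions and a hypothesis on simplicial $\pi_0$ that $\pi_0^{\bA^1}$-discreteness does not supply.

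\textbf{The missing idea.} Realization-fibrations are far more general than homotopy cartesian maps. The paper's basic supply of them comes from siftedness of $\DE^{\op}$. Since $|X_* \times Y_*| \simeq |X_*| \times |Y_*|$, every projection $B_* \times C_* \rtarr B_*$ is a realization-fibration (Lemma \ref{lem:siftedprojection}), although such maps are almost never homotopy cartesian.

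Equifiberedness is then used in a different direction from yours. The pullbacks of $f_*$ along the simplices $\DE[n] \rtarr B_*$ form a diagram that is equifibered over the simplices of $B_*$. Lemma \ref{lem:localtoglobal} glues objectwise realization-fibrations along such a diagram, applying descent once in $s\sT$ and once more after realizing.

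Discreteness enters only at the end. $\lproj(f_*)$ is a simplicial subobject of $\pi_0^{\bA^1}(B_*)$, and it contains every vertex because anything over $\DE[0]$ is a projection. In a constant simplicial object every simplex is a degeneracy of a vertex. Hence $f_*$ is a local projection, and therefore a realization-fibration.

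To fix your proposal, replace the key step and the factorization by this local-projection mechanism.
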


\subsection{Realization-fibrations and the proof of Theorem \ref{thm:rezk}}\label{subsec:rf}

The notation used in Theorem \ref{thm:rezk} is of course suggestive of the fact that in our applications, we will take $Y_*$ to be the terminal simplicial space---so that the level-wise connectedness of $B_*$ will give us Lemma \ref{thm:3.2and}.
But one must take brief respite to make a conceptual remark about Theorem \ref{thm:rezk} to put us directly in the framework of \cite{Rezk14}. Implicitly restricting down to Reedy cofibrant objects, a routine check using the Bousfield-Kan map tells us that there is a weak equivalence $\hocolim_{\DE^{\op}} X_* \rtarr |X_*|$ for each $X_* \in s\sT$.
Under this lens, Theorem \ref{thm:rezk} is manifestly a question of the preservation of homotopy pullbacks under certain kinds of homotopy colimits. For the rest of this section, we will use $|X_*|$ and $\hocolim_{\DE^{\op}}X_*$ interchangeably (and by one, we may as well mean the other). By a \emph{realization-fibration} (sometimes denoted RF), we will mean a map $f_* \colon Y_* \rtarr B_*$ such that
the output of Theorem \ref{thm:rezk} holds true (i.e. a realization-fibration is a map such that homotopy colimits commute with homotopy pullbacks of the map). Our task then is to ascertain conditions under which a map of simplicial spaces is a realization-fibration.\\

The perspective due to Rezk allows us to deduce Theorem \ref{thm:rezk} from a local-to-global principle for realization-fibrations, applied to the homotopy colimit of representable sifted presheaves. Our starting point for the local-to-global point of view is the principle of descent, as presented in \cite[Section 6.5 and Proposition 6.6]{Rezk2010} for model toposes. 
We say that a natural transformation $f \colon U \rtarr V$ of functors $U,V \colon \sJ \rtarr \sT$ is equifibered if for every $J \rtarr J'$ in $\sJ$, the associated naturality square is a homotopy pullback square. We will use the notation $(\aC,\aD)$ to mean the functor category whose objects are functors $\aC \rtarr \aD$ and morphisms are natural transformations.
\begin{lem}[Descent]\label{lem:descent}
    \begin{enumerate}
        Let $\sJ$ be a (small) category.
        \item Let $p \colon E \rtarr B$ be a fibration in $\sT$ and $V \colon \sJ \rtarr \sT$ be a diagram of spaces such that $h \colon \hocolim_{\sJ} V \rtarr B$ is a weak equivalence in $\sT$. Let $U \colon \sJ \rtarr \sT$ be the diagram of spaces defined by $U(J) = V(J) \times_B E$. The map $\hocolim_{\sJ} U \rtarr E$ is a weak equivalence.
        \item Let $U$ and $V$ be diagrams of spaces $\sJ \rtarr \sT$, and $f \colon U \rtarr V$ an equifibered map in $(\sJ, \sT)$. For each $J \in \sJ$, the square:
        \[
        \xymatrix{
        U(J) \ar[d] \ar[r] & \hocolim_{\sJ} U \ar[d]\\
        V(J) \ar[r] & \hocolim_{\sJ} V
        }
        \]
        \noindent is a homotopy pullback.
        \item Let $f \colon X \rtarr Y$ be an equifibered map, and \[\xymatrix{
            X' \ar[r] \ar[d] & X \ar[d]^f\\
            Y' \ar[r] & Y
        }\]
        \noindent a homotopy pullback square in $(\sJ,\sT)$. Applying $\hocolim_{\sJ}$ at each corner of the above square still yields a homotopy pullback.
    \end{enumerate}
\end{lem}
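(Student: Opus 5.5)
The plan is to prove the three parts of Lemma~\ref{lem:descent} by reducing to the corresponding statements for a model topos, as in \cite[Section 6.5 and Proposition 6.6]{Rezk2010}. The underlying fact is that $\sT$, with its $\bA^1$-local model structure, is a left Bousfield localization of the Nisnevich-local injective model structure on simplicial sheaves, and that this localization is left exact. The first step is therefore to record that $\sT$ is a model topos in Rezk's sense. The simplicial Nisnevich-local structure is one because it is a left exact localization of simplicial presheaves on $\mathrm{Sm}_S$. The $\bA^1$-localization remains one after pointing, since homotopy pullbacks in the based category are computed underlying.

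For the localization step I would use the Morel--Voevodsky fact that $\bA^1$-equivalences are stable under pullback along $\bA^1$-fibrations between fibrant objects. This follows from \cite{MV99}: the Nisnevich site has an interval, so $\bA^1$-localization via $\mathrm{Sing}_*$ composed with fibrant replacement preserves homotopy pullbacks up to the usual Nisnevich-sheafification argument. I expect this to be the main obstacle. Strictly, $\bA^1$-localization is not known to be left exact in full generality, so I would fall back on \cite[Proposition 5.5.6.17]{Lurie2017}, phrased for the $\infty$-topos $L_W\sT$. This $\infty$-category is presentable, and colimits in it are universal because the $\bA^1$-local objects are closed under limits and the localization is a left exact (topological-type) localization of the Nisnevich $\infty$-topos. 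With that, descent is exactly the statement that $L_W\sT$ is an $\infty$-topos.

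Given the topos structure, the three parts follow in order.
\begin{enumerate}
\item Part (1) is universality of colimits. Pullback along $p$ commutes with homotopy colimits, so $\hocolim_\sJ (V\times^h_B E)\simeq (\hocolim_\sJ V)\times^h_B E\simeq E$. Here the strict pullback computes the homotopy pullback because $p$ is a fibration.
\item Part (2) is the descent (effectivity) axiom. Equifibered diagrams over $V$ correspond to objects over $\hocolim V$, and the correspondence is given by pullback; this is \cite[Proposition 6.6]{Rezk2010}.
\item Part (3) is deduced formally from (2). First note that $X'\to Y'$ is equifibered by pasting homotopy pullback squares. Then, by (2), each $X'(J)$ is the homotopy pullback of $Y'(J)\to \hocolim Y'$ along $\hocolim X'\to\hocolim Y'$, and similarly for $X$. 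Apply (1) to $\hocolim Y'\to\hocolim Y$ and to the fibrant replacement of $\hocolim X\to\hocolim Y$. This identifies $\hocolim Y'\times^h_{\hocolim Y}\hocolim X$ with $\hocolim_J\big(Y'(J)\times^h_{Y(J)}X(J)\big)\simeq \hocolim X'$. The last equivalence again uses (2) for $X\to Y$, to rewrite $X(J)$ as $Y(J)\times^h_{\hocolim Y}\hocolim X$.
\end{enumerate}
Throughout, I would work with projectively cofibrant diagrams so that $\hocolim$ is computed by the Bousfield--Kan formula.
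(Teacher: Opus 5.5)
Your route is the paper's route. The paper offers no argument beyond invoking Rezk's descent for model toposes (and, in passing, calling $L_W\sT$ an $\infty$-topos); you try to supply the missing input that $\sT$ with its $\bA^1$-local structure is a model topos.

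That input is exactly where your argument fails, and the obstacle you flag is not a technicality. The $\bA^1$-localization is not left exact, hence not topological. For instance, $L_{\bA^1}$ sends the empty pullback of $0,1\colon S\rtarr\bA^1$ to $\emptyset$, while the pullback of the localized cospan is a point. Closure of the local objects under limits holds for every reflective subcategory and says nothing about colimits.

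The Morel--Voevodsky right properness you quote does give part (1), in the unbased setting. In the $1$-topos of simplicial sheaves, the Bousfield--Kan $\hocolim_{\sJ}$ commutes with strict base change along $p$, so $\hocolim_{\sJ}U\cong(\hocolim_{\sJ}V)\times_B E$. The map to $E$ is then the base change of the $\bA^1$-equivalence $h$ along the $\bA^1$-fibration $p$, hence an $\bA^1$-equivalence. That is universality of colimits, not descent, and it does not yield (2).

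Part (2), the effectivity half of the topos axioms, is in fact false here, so no argument can establish it in this generality. Here is a counterexample.
\begin{itemize}
\item Let $W=(\bA^1\setminus 0)\sqcup(\bA^1\setminus 1)\rtarr\bA^1$ be the Zariski cover, and let $V_\bullet$ be its \v{C}ech nerve, a $\DE^{\op}$-diagram.
\item Each $V_n$ is a disjoint union of the $\bA^1$-rigid schemes $\bA^1\setminus 0$, $\bA^1\setminus 1$ and $\bA^1\setminus\{0,1\}$, hence $\bA^1$-local. Moreover $\hocolim V\simeq\bA^1\simeq *$.
\item The d\'ecalage $U_n=V_{n+1}=V_n\times_{\bA^1}W\rtarr V_n$ is equifibered, because its naturality squares are strict pullbacks of local discrete sheaves. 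Its homotopy colimit is $\hocolim U\simeq W$.
\item Part (2) at $[0]$ would therefore make the monomorphism $W\times_{\bA^1}W\rtarr W\times W$ an $\bA^1$-equivalence between $\bA^1$-local discrete sheaves, hence an isomorphism.
\item It is not an isomorphism: the point $(1,0)$, with $1\in\bA^1\setminus 0$ and $0\in\bA^1\setminus 1$, lies in the target but not in the image.
\end{itemize}
So the unbased $\infty$-category underlying $L_W\sT$ is not an $\infty$-topos, and adding disjoint basepoints carries the counterexample into $\sT$.

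Your pointing step is also unsound. Homotopy colimits of based spaces are not computed on underlying spaces unless $N\sJ$ is contractible. For discrete $\sJ$ with $V=(B,*)$, part (1) would assert $E\vee F\simeq E$, where $F$ is the fiber of $p$.

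Your formal deduction of (3) from (1) and (2) is fine, but it inherits the failure of (2). What actually needs proof is the special case used for \textbf{\ref{salist:SA6}}: realizations of path-space fibrations with levelwise connected base. That requires input specific to $\bA^1$-homotopy theory, for example Morel-type results on $\bA^1$-fiber sequences over connected bases, rather than a general descent principle.
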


\indent The central local-to-global statement we seek states that under certain hypotheses (i.e. equifiberedness) realization-fibrations can be glued together to get a realization-fibration.

\begin{lem}\label{lem:localtoglobal}
Let $\sJ$ be a small category, $h \colon W \rtarr V$ an equifibered map in $(\sJ, s\sT)$ that is an object-wise realization-fibration. Then the map obtained by passing to the homotopy colimit $\hocolim_{\sJ} h \colon \hocolim_{\sJ} W \rtarr \hocolim_{\sJ} V$ is also a realization-fibration.
\end{lem}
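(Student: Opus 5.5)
The plan is to pull the given homotopy pullback over $\hocolim_{\sJ} V$ back to a $\sJ$-indexed family of homotopy pullbacks over the $V(J)$, and then realize. On each piece we use that $h_J$ is a realization-fibration. We glue the pieces using Lemma \ref{lem:descent}(3) in $\sT$, after checking that the realized map $|h|$ is still equifibered.

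Throughout, homotopy pullbacks in $s\sT$ are computed levelwise. Lemma \ref{lem:descent} is stated for diagrams in $\sT$, but it applies degreewise to diagrams in $s\sT$, since $\hocolim_{\sJ}$ is computed degreewise as well.

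Fix a map $g_* \colon Y_* \rtarr \hocolim_{\sJ} V$ and a homotopy pullback $F_* \simeq Y_* \times^h_{\hocolim V} \hocolim_{\sJ} W$. We must show that the realized square is a homotopy pullback in $\sT$.

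\textbf{Step 1 (localize $Y_*$).} Define $Y_J := Y_* \times^h_{\hocolim V} V(J)$. The map $Y_\bullet \rtarr V$ is equifibered by pasting of homotopy pullbacks. Lemma \ref{lem:descent}(1), applied degreewise, gives $\hocolim_{\sJ} Y_\bullet \simeq Y_*$.

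\textbf{Step 2 (localize $F_*$).} Set $F_J := Y_J \times^h_{V(J)} W(J)$. Because $h$ is equifibered, Lemma \ref{lem:descent}(2) identifies $W(J) \simeq V(J)\times^h_{\hocolim V}\hocolim W$. Pasting then gives $F_J \simeq F_* \times^h_{Y_*} Y_J$. Descent (1) again yields $\hocolim_{\sJ} F_\bullet \simeq F_*$. By construction, the square $F_\bullet \rtarr W$ over $Y_\bullet \rtarr V$ is a homotopy pullback square of $\sJ$-diagrams.

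\textbf{Step 3 (realize).} Realization is a left adjoint (\textbf{\ref{salist:SA1}}) and preserves weak equivalences between Reedy cofibrant objects (\textbf{\ref{salist:SA3}}). By the categorical Fubini theorem for the double homotopy colimit over $\sJ\times\DE^{\op}$, $|\hocolim_{\sJ} X_\bullet| \simeq \hocolim_{\sJ}|X_\bullet|$. Thus it suffices to show that
\[
\hocolim_{\sJ}|F_\bullet| \simeq \hocolim_{\sJ}|Y_\bullet| \times^h_{\hocolim_{\sJ}|V|} \hocolim_{\sJ}|W|.
\]
Since each $h_J$ is a realization-fibration, $|F_J| \simeq |Y_J|\times^h_{|V(J)|}|W(J)|$. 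So the realized square of $\sJ$-diagrams in $\sT$ is objectwise a homotopy pullback.

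\textbf{Step 4 (equifiberedness of $|h|$).} For $J\rtarr J'$, the naturality square of $h$ is a homotopy pullback of $h_{J'}$ along $V(J)\rtarr V(J')$. Since $h_{J'}$ is a realization-fibration, realizing preserves this square. Hence $|h|\colon |W|\rtarr |V|$ is equifibered in $(\sJ,\sT)$. Lemma \ref{lem:descent}(3) now gives the displayed homotopy pullback, and transporting back along Steps 1--3 finishes the proof.

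The main obstacle I expect is Step 3: commuting $\hocolim_{\sJ}$ with $|-|$ compatibly with the identifications from Steps 1--2. This needs cofibrant replacements so that \textbf{\ref{salist:SA3}} and the Bousfield--Kan comparison $\hocolim_{\DE^{\op}}\simeq|-|$ apply. The pasting bookkeeping in Step 2 is routine by comparison.
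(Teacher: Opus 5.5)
Your proposal is correct and follows essentially the same route as the paper. In both arguments you localize the pulled-back square over each $V(J)$ via descent, use that each $h_J$ is a realization-fibration to get an objectwise homotopy pullback after realization, note that the realized map is still equifibered for that same reason, and reassemble with Lemma \ref{lem:descent}(3) after commuting $|-|$ with the homotopy colimit. The only cosmetic difference is that the paper first factors the map on homotopy colimits as a weak equivalence followed by a fibration and works with strict pullbacks of that fibration, transferring the realization-fibration property from $W(J)$ to those pullbacks, whereas you work with homotopy pullbacks of $W(J)$ directly.
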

\begin{proof}
The argument in this proof is due to \cite{Rezk14}. Define $B_* := \hocolim_{\sJ} V$ and pick a factorization of $\hocolim_{\sJ} h \colon \hocolim_{\sJ} W \rtarr B_*$ as the composite $\xymatrix@1{\hocolim_{\sJ} W \ar^-{\io_*}[r] & E_* \ar[r]^{p_*} & B_*}$, where $\io_*$ is a weak equivalence and $p_*$ is a fibration.
Note that to show $\hocolim_{\sJ} h$ is an RF, it suffices to show $p_*$ is an RF. Toward that end, we make some initial remarks.
For each $J \in \sJ$, we have a map $V(J)_* \rtarr B_*$. Denote by $U(J)_*$ the fiber product $E_* \times_{B_*} V(J)_*$.
\[
\xymatrix{
    U(J)_* \ar[r] \ar[d] & E_* \ar[d]^-p\\
    V(J)_* \ar[r] & B_*
}
\] 
\noindent 
$h \colon W \rtarr V$ is equifibered, so Lemma \ref{lem:descent} tells us that there is a homotopy pullback square.
\[
\xymatrix{
    W(J)_* \ar[r] \ar[d] & \hocolim_{\sJ} W \ar[d]^-{\hocolim_{\sJ}h}\\
    V(J)_* \ar[r] & B
}
\] 
Since $\io_* \colon \hocolim_{\sJ} W \rtarr E_*$ is a weak equivalence, we see that there is a weak equivalence $W(J)_* \rtarr U(J)_*$ for each $J \in \sJ$. In particular, the maps $U(J)_* \rtarr V(J)_*$ are RF too.
Lemma \ref{lem:descent} also provides a chain of weak equivalences $\hocolim_{\sJ} W \rtarr \hocolim_{\sJ} U \rtarr E_*$. We are ready to show that $p_*$ is an RF now.\\

Let $\ph \colon B'_* \rtarr B_*$ be a map in $s\sT$. Define $E'_*$, $U'$, and $V'$ by the following homotopy pullbacks.
\[\xymatrix{
    E'_* \ar[r] \ar[d] & E_* \ar[d]^{p}\\
    B'_* \ar[r]_{\ph} & B_*
} \qquad
\xymatrix{
    V'(J)_* \ar[r] \ar[d] & V(J)_* \ar[d]\\
    B'_* \ar[r]_{\ph} & B_*
} \qquad
\xymatrix{
    U'(J)_* \ar[r] \ar[d] & U(J)_* \ar[d]\\
    B'_* \ar[r]_{\ph} & B_*
}
\]
\noindent By observing the middle and the right homotopy pullbacks, and using the homotopy pullback lemma, we get the following homotopy pullback square. 
\[
\xymatrix{
    U' \ar[r] \ar[d] & U \ar[d]\\
    V' \ar[r] & V
}
\]

We already know that there is a weak equivalence $\hocolim_{\sJ} U \rtarr E_*$, and an equality $B_* = \hocolim_{\sJ} V$. But by Lemma \ref{lem:descent}, there are also weak equivalences $\hocolim_{\sJ}V' \rtarr B'_*$ and $\hocolim_{\sJ} U' \rtarr E'_*$. This tells us that applying $\hocolim_{\sJ}$ to each corner of the above square, 
we get a homotopy pullback 
\[\xymatrix{
    E'_* \ar[r] \ar[d] & E_* \ar[d]^{p}\\
    B'_* \ar[r]_{\ph} & B_*
}\]
\noindent It suffices to know that applying $|-|$ at each corner, we still get a homotopy pullback square. But notice that $|\hocolim_{\sJ} (-)| \simeq \hocolim_{\sJ} |-|$ since colimits commute with colimits. So we might as well have realized down in our earlier homotopy pullback, and then applied $\hocolim_{\sJ}$. Note that since each $U(J)_* \rtarr V(J)_*$ is an RF, upon realization we get a homotopy pullback.
\[
\xymatrix{
    |U'| \ar[r] \ar[d] & |U| \ar[d]\\
    |V'| \ar[r]& |V|
}
\]
\noindent Now we notice that $|U| \rtarr |V|$ is equifibered since each $U(J)_* \rtarr V(J)_*$ is an RF. Therefore by descent, applying $\hocolim_{\sJ}$ to each corner will yield yet another homotopy pullback square, which is what we needed. 
\end{proof}
\indent We now make use of the siftedness of $\DE^{\op}$. When we say sifted, we implicitly mean the kind of homotopy siftedness (after a routine check in homotopy coend calculus using the Bousfield-Kan formula) that results in a canonical weak equivalence $|X_* \times Y_*| \rtarr |X_*| \times |Y_*|$ for any $X_*, Y_*$ in $s\sT$.
In simple terms, siftedness tells us that realization commutes with finite products in $s\sT$. The primary utility of working over a sifted category is demonstrated in the following lemma.
\begin{lem}\label{lem:siftedprojection}
All maps of the form $p_* \colon B_* \times C_* \rtarr B_*$ in $s\sT$ are realization-fibrations.
\end{lem}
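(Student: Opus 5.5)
Let $\ph \colon B'_* \rtarr B_*$ be an arbitrary map in $s\sT$. By the definition of a realization-fibration, I need to show that applying $|-|$ to the homotopy pullback of $p_*$ along $\ph$ again gives a homotopy pullback. The plan is to identify this homotopy pullback explicitly, and then to use siftedness twice: once for $B'_* \times C_*$ and once for $B_* \times C_*$.

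First step: the homotopy pullback of the projection $p_* \colon B_* \times C_* \rtarr B_*$ along $\ph$ is computed, up to weak equivalence, by the strict square
\[
\xymatrix{
    B'_* \times C_* \ar[r]^-{\ph \times \id} \ar[d] & B_* \times C_* \ar[d]^{p_*}\\
    B'_* \ar[r]_{\ph} & B_*
}
\]
To justify this, factor $\ph$ as a weak equivalence followed by a fibration. Fibrant-replace $C_*$ if necessary, so that the projection $p_*$ is a fibration, being a base change of $C_* \rtarr *$. A strict pullback along a fibration is a homotopy pullback. The strict pullback here is $B'_* \times C_*$, and products preserve levelwise weak equivalences, so the replacements cost nothing. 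Since weak equivalences in $s\sT$ are levelwise, this reduces to the fact that products in $\sT$ preserve weak equivalences between spaces.

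Second step: apply $|-|$ to every corner of the square above. Since $|-|$ is the diagonal of bisimplicial sheaves, it commutes with finite products on the nose. Alternatively, by the homotopical siftedness of $\DE^{\op}$ recalled just before the statement, the canonical maps $|B'_* \times C_*| \rtarr |B'_*| \times |C_*|$ and $|B_* \times C_*| \rtarr |B_*| \times |C_*|$ are weak equivalences. These maps are natural, so the realized square is weakly equivalent to
\[
\xymatrix{
    |B'_*| \times |C_*| \ar[r]^-{|\ph| \times \id} \ar[d] & |B_*| \times |C_*| \ar[d]\\
    |B'_*| \ar[r]_{|\ph|} & |B_*|
}
\]
This square is a homotopy pullback in $\sT$ by the same argument as in the first step, now carried out in $\sT$: projections away from a fibrant-replaced $|C_*|$ are fibrations, and the square is a strict pullback. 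Homotopy pullback squares are invariant under objectwise weak equivalence of squares, so the original realized square is a homotopy pullback.

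The main obstacle is the homotopical bookkeeping: checking that products preserve the relevant weak equivalences, and that the comparison with the sifted product map is natural in the square. I would handle both by working in the simplicial (Nisnevich-local) model structure first and then $\bA^1$-localizing. This localization step is harmless because finite products preserve $\bA^1$-weak equivalences in the Morel--Voevodsky model structure.
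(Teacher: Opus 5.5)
Your proposal is correct and follows the paper's proof. You identify the homotopy pullback of the projection along $\varphi_*$ with the product square $B'_*\times C_* \to B_*\times C_*$ over $B'_*\to B_*$, realize it, and use the natural comparison maps $|B'_*\times C_*|\to|B'_*|\times|C_*|$ and $|B_*\times C_*|\to|B_*|\times|C_*|$ to recognize a product square in $\sT$, which is a homotopy pullback. The only difference is that you spell out what the paper calls evident (fibrant replacement of $C_*$, closure of $\bA^1$-weak equivalences under finite products, invariance of homotopy pullbacks under objectwise weak equivalence), and you note that with $|-|$ taken as the diagonal the product comparison is even an isomorphism, so siftedness is not needed.
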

\begin{proof}
Let $\ph_* \colon B_*' \rtarr B_*$ be a map in $s\sT$. Any homotopy pullback of $p_*$ along $\ph_*$ must look like the following diagram.
\[
\xymatrix{
    B'_* \times C_* \ar[r]^{\ph_* \times \id} \ar[d]_{\pi_{B'_*}} & B_* \times C_* \ar[d]^{p_*}\\
    B'_* \ar[r]_{\ph_*} & B_*
}
\]
\noindent where $\pi_{B'_*}$ is the projection map onto $B'_*$. Applying $|-|$ on each corner, and using the canonical weak equivalences $|B_* \times C_*| \rtarr |B_*| \times |C_*|$ and $|B'_* \times C_*| \rtarr |B'_*| \times |C_*|$, we get a diagram.
\[
\xymatrix{
    |B'_*| \times |C_*| \ar[rr]^{|\ph_*|\times\id} \ar[d]_{\pi_{|B'_*|}} & & |B_*| \times |C_*| \ar[d]^{|p_*|}\\
    |B'_*| \ar[rr]_{|\ph_*|} && |B_*|
}
\]
\noindent where $\pi_{|B'_*|}$ is the projection map onto $|B'_*|$. This is evidently another homotopy pullback square, and we are done.
\end{proof}

Now we introduce the notions of weak and local projection maps. We will soon see that every local projection map is in fact a realization-fibration, and that the hypotheses in Theorem \ref{thm:rezk} ensure that $f_* \colon Y_* \rtarr B_*$ will be a local projection.
A map $p_* \colon E_* \rtarr B_*$ in $s\sT$ is called a \emph{weak projection} if it is weakly equivalent over $B_*$ to a projection map $\tilde{p}_* \colon B_* \times C_* \rtarr B_*$ in $s\sT$. We will say that a map $p_* \colon E_* \rtarr B_*$ in $s\sT$ is a \emph{local projection} if for every 
$n$, and every homotopy pullback square of the form:
\[
\xymatrix{
    E'_* \ar[r] \ar[d]_-{q_*} & E_* \ar[d]^-{p_*}\\
    \DE[n] \ar[r]_-{b} & B_*
}
\]
\noindent where $\DE[n]$ denotes the standard simplicial set $\bm \mapsto \Hom_{\DE}(\bm,\bn)$ (thought of as a simplicial space by the constant scheme construction), the map $q_*$ is a weak projection map. Given $X_* \in s\sT$, there is a simplicial presheaf $\pi_0^{\bA^1}(X_*)$ given by $(\pi_0^{\bA^1} (X_*))_n = \pi_0^{\bA^1} (X_n)$. We can think of $\pi_0^{\bA^1}(X_*)$ as a levelwise discrete object in $s\sT$. Given a map $p_* \colon E_* \rtarr B_*$, 
let $\lproj(p)_n \subseteq (\pi_0^{\bA^1}(B_*))_n$ denote the presheaf prescribed locally as the collection of sections of $(\pi_0^{\bA^1}(B_*))_n$ represented by maps $b_* \colon \DE[n] \rtarr B_*$ such that the pullback of $p_*$ along $b_*$ is a weak projection. This prescription defines a simplicial subpresheaf $\lproj(p_*) \subseteq \pi_0^{\bA^1}(B_*)$.\\

Suppose $p_* \colon E_* \rtarr B_*$ and $\ph_* \colon B'_* \rtarr B_*$ are maps in $s\sT$. Note that by the pullback lemma, the pullback of $p_*$ along $\ph_*$ is a local projection iff $\ph(\pi_0^{\bA^1} B'_*) \subseteq \lproj(p_*)$. Taking $B'_*=B_*$ and $\ph_* = \id_{B_*}$, this means in particular that $p_*$ is a local projection iff $\lproj(p_*) = \pi_0^{\bA^1} (B_*)$. In applications, we typically demonstrate that $p_*$ is a local projection by showing that 
$\pi_0^{\bA^1} (B_*) \subseteq \lproj(p_*)$. 

\begin{lem}
    All local projections in $s\sT$ are realization-fibrations.
\end{lem}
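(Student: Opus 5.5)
The plan is to follow Rezk's local-to-global strategy. We write the base $B_*$ as a homotopy colimit of representables and then apply Lemma \ref{lem:localtoglobal}. The first step is to express any simplicial space $B_*$ as the homotopy colimit of its simplices. Let $\sJ = \DE \downarrow B_*$ be the category of simplices of $B_*$, or more precisely a sheaf-theoretic version whose objects are pairs $(\bn, b \colon \DE[n] \rtarr B_*)$, working over each $U \in \mathrm{Sm}_S$ and allowing $\DE[n] \times U$ in place of $\DE[n]$. Let $V \colon \sJ \rtarr s\sT$ send $(\bn,b)$ to $\DE[n]$ (resp. $\DE[n]\times U$). The co-Yoneda lemma, together with Reedy cofibrancy, gives a weak equivalence $\hocolim_{\sJ} V \rtarr B_*$.

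Next, given the local projection $p_* \colon E_* \rtarr B_*$, which we may assume is a fibration after factoring, define $W(\bn,b)_* := \DE[n] \times_{B_*} E_*$. This is the homotopy pullback of $p_*$ along $b$. By Lemma \ref{lem:descent}(1), the map $\hocolim_{\sJ} W \rtarr E_*$ is a weak equivalence, and it lies over $\hocolim_{\sJ} V \simeq B_*$. The natural map $h \colon W \rtarr V$ is equifibered, because every naturality square is a strict pullback of a fibration over a map $\DE[m] \rtarr \DE[n]$, hence a homotopy pullback by the pasting lemma.

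It remains to check that $h$ is objectwise a realization-fibration. By the definition of local projection, each $W(\bn,b)_* \rtarr \DE[n]$ is a weak projection, i.e. weakly equivalent over $\DE[n]$ to a product projection $\DE[n] \times C_* \rtarr \DE[n]$. Lemma \ref{lem:siftedprojection} says product projections are realization-fibrations. The realization-fibration property is invariant under weak equivalence over the base, since homotopy pullbacks and $|-|$ (by \textbf{\ref{salist:SA3}} on Reedy cofibrant objects) respect weak equivalences. So each $h_{(\bn,b)}$ is a realization-fibration. Lemma \ref{lem:localtoglobal} then shows that $\hocolim_{\sJ} h$ is a realization-fibration. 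Since this map is weakly equivalent over the base to $p_*$, the same invariance lets us conclude that $p_*$ is a realization-fibration.

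The main obstacle is the first step: making the decomposition $B_* \simeq \hocolim_{\sJ} \DE[n]$ honest in the sheaf setting. Simplices of $B_*$ are sections over various $U$, not global points, and "local projection" was defined only via maps from $\DE[n]$. I would try to handle this by letting $\sJ$ range over pairs $(U,\bn)$ with $U \in \mathrm{Sm}_S$, using Proposition \ref{prop:1.5} to reduce to representables. I would then note that the weak projection condition over $\DE[n]\times U$ follows from the one over $\DE[n]$ by pullback stability of weak projections. Some care is also needed to ensure the homotopy colimit over $\sJ$ is computed by a Reedy-cofibrant diagram, so that the descent lemma applies.
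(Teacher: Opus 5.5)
Your strategy is the paper's: resolve $B_*$ by representables, pull $p_*$ back along each one, use descent and equifiberedness, get objectwise realization-fibrations from Lemma~\ref{lem:siftedprojection}, and finish with Lemma~\ref{lem:localtoglobal}. Your extra checks are fine: equifiberedness via pasting, and invariance of realization-fibrations under weak equivalence via \textbf{\ref{salist:SA3}}. You are also right that in $s\sT$ the representables must be $\DE[n]\times U$ for $U\in\mathrm{Sm}_S$, not just $\DE[n]$; the paper's proof passes over this. The step that fails is your patch for it. A map $\DE[n]\times U\rtarr B_*$ is a section of $B_n$ over $U$. In general it does not factor through any map $\DE[n]\rtarr B_*$, i.e.\ through a section over $S$. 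So pullback stability has nothing to pull back from, and the weak-projection condition over $\DE[n]\times U$ cannot be derived from the condition over $\DE[n]$.

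In fact, if local projections are tested only against maps out of $\DE[n]$, the statement is false. Take $S=\Spec\mathbb{R}$ and let $X\subset\mathbb{P}^2_{\mathbb{R}}$ be the conic $x^2+y^2+z^2=0$. Let $S^1=\DE[1]/\partial\DE[1]$, $B_n=(S^1_n\times X)_+$, $E_*=c_*(X_+)$, and let $p_*$ be induced by the vertex of $S^1$.
\begin{itemize}
\item By Morel--Voevodsky, $X\rtarr\pi_0^{\bA^1}(X)$ is an epimorphism of sheaves, and $X(\mathbb{R})=\emptyset$. Hence every map $\DE[n]\rtarr B_*$, even up to homotopy, is the basepoint.
\item Along the basepoint, $p_*$ pulls back to $\id_{\DE[n]}$, which is a weak projection. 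So $p_*$ passes your test.
\item But the levelwise homotopy pullback of $p_*$ along itself is $c_*(X_+)$, whose realization is $X_+$.
\item The homotopy pullback of $|p_*|\colon X_+\rtarr(S^1\times X)_+$ along itself is $(\bZ\times X)_+$. The comparison map misses components of $\pi_0^{\bA^1}$ over $\Spec\mathbb{C}$, where $X$ has points.
\end{itemize}
So $p_*$ is not a realization-fibration. The repair is definitional: require the weak-projection condition for all maps $\DE[n]\times U\rtarr B_*$. The paper implicitly assumes this when it calls the subpresheaf of locally projective simplices ``prescribed locally'' and characterizes local projections by equality with all of $\pi_0^{\bA^1}(B_*)$. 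With that definition, the condition over $\DE[n]\times U$ is a hypothesis rather than something to prove, and the rest of your argument goes through as in the paper.
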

\begin{proof}
    Let $p_* \colon E_* \rtarr B_*$ be a local projection in $s\sT$. Note that we can write $B_*$ up to weak equivalence as the colimit of representable objects in $s\sT$ (by which we mean $\DE[n]$ for $n \geq 0$, considered as simplicial spaces). 
    For each map $\DE[n] \rtarr B_*$ forming the appropriate colimit, since $p_*$ is a local projection, we can form the homotopy pullback square below.
    \[
    \xymatrix{
        \DE[n] \times C_* \ar[r] \ar[d] & E_* \ar[d]^p\\
        \DE[n] \ar[r] & B_*
    }
    \]
    \noindent Descent tells us that $E_*$ then is the homotopy colimit of objects of the form $\DE[n] \times C_*$. In sum, $p_*$ can be written as the homotopy colimit of maps that each look like $\DE[n] \times C_* \rtarr \DE[n]$. By Lemma \ref{lem:siftedprojection}, we know that each of these maps is
    a realization-fibration. Lemma \ref{lem:localtoglobal} then tells us that $p_*$ is a realization-fibration too. 
\end{proof}

\begin{proof}[Proof of Theorem 3.10]
    We return to the notation set up in the statement of Theorem \ref{thm:rezk}. Namely, let $f_* \colon Y_* \rtarr B_*$ be a map in $s\sT$.
    Note first that since $\DE[0]$, as a simplicial space, is terminal, every map into $\DE[0]$ in $s\sT$ is a weak projection. As a result, any 0-simplex of $\pi_0^{\bA^1} (B_*)$, thought of as being represented by a map out of $\DE[0]$, is contained in $\lproj(f_*)$.
    Since $\pi_0^{\bA^1} (B_*)$ is discrete by assumption, meaning $\pi_0^{\bA^1}(B_*) \simeq \mathrm{const}(\pi_0^{\bA^1}(B_0))$, we see in fact that $\pi_0^{\bA^1}(B_*) \subseteq \lproj(f_*)$. We conclude that $f_*$ is a local projection, and therefore a realization-fibration.
\end{proof}

\indent We complete the proof of Lemma \ref{lem:3.1} by introducing, and passing to, fibrant replacements. Recall that $\sT$ admits functorial factorization. It follows from a result on Reedy model structures that this property lifts to simplicial spaces as well, so that $s\sT$ admits functorial factorization too (see \cite[Theorem 5.2.5]{Hovey99} for the general statement due to transfinite induction). The existence of a fibrant replacement functor $\Ex \colon s\sT \rtarr s\sT$ with the following properties is now immediate.
\begin{enumerate}
\item For every $X_* \in s\sT$, $\Ex(X_*)$ is fibrant in $s\sT$.
\item For every $X_* \in s\sT$, there is an acyclic cofibration $\mu_* \colon X_* \rtarr \Ex(X_*)$.
\item $\Ex$ preserves (and in fact reflects) weak equivalences\footnote{This follows from applications of the two-out-of-three property for weak equivalences.}. 
\end{enumerate}

\noindent Let $X_*$ be an arbitrary simplicial space that is Reedy cofibrant and level-wise connected. Consider the following naturality square.
\[
\xymatrix{
|\OM_* X_*| \ar[r]^{\ga} \ar[d]_-{|\OM_* \mu_*|} & \OM |X_*| \ar[d]^-{\OM |\mu_*|}\\
|\OM_* \Ex X_*| \ar[r]_{\ga} & \OM |\Ex X_*|
}
\]
\noindent The bottom arrow of this square (by Lemma \ref{lem:3.1.2}) is a weak equivalence since $\Ex X_*$ is fibrant. The left and right arrows of the square are weak equivalences since $\OM$ preserves weak equivalences and $|-|$ preserves weak equivalences between Reedy cofibrant objects. By the two-out-of-three property for weak equivalences, it follows that the top arrow $\ga \colon |\OM_* X_*| \rtarr \OM |X_*|$ is a weak equivalence.

\printbibliography
\end{document}